\documentclass[12pt]{amsart}
\raggedbottom

\usepackage[english]{babel}
\usepackage[utf8]{inputenc}
\usepackage{amsmath, amssymb, amsthm, amsfonts, amsxtra, stmaryrd, enumerate, graphicx}
\usepackage[colorinlistoftodos]{todonotes}
\usepackage{placeins}

\newtheorem{theorem}{Theorem}[section]
\newtheorem{corollary}{Corollary}[theorem]
\newtheorem{claim}[theorem]{Claim}
\newtheorem{subclaim}[theorem]{Subclaim}
\newtheorem{lemma}[theorem]{Lemma}
\newtheorem{fact}[theorem]{Fact}
\newtheorem{proposition}[theorem]{Proposition}

\newtheorem{question}[theorem]{Question}

\usepackage{tikz-cd}
\newtheorem{definition}{Definition}[section]

\theoremstyle{definition}

\newtheorem{remark}[theorem]{Remark}
\newtheorem{example}[theorem]{Example}

\def\Ind#1#2{#1\setbox0=\hbox{$#1x$}\kern\wd0\hbox to 0pt{\hss$#1\mid$\hss}
\lower.9\ht0\hbox to 0pt{\hss$#1\smile$\hss}\kern\wd0}

\def\ind{\mathop{\mathpalette\Ind{}}}

\def\notind#1#2{#1\setbox0=\hbox{$#1x$}\kern\wd0
\hbox to 0pt{\mathchardef\nn=12854\hss$#1\nn$\kern1.4\wd0\hss}
\hbox to 0pt{\hss$#1\mid$\hss}\lower.9\ht0 \hbox to 0pt{\hss$#1\smile$\hss}\kern\wd0}

\def\nind{\mathop{\mathpalette\notind{}}}

\usepackage{etoolbox}
\patchcmd{\subsection}{-.5em}{.5em}{}{}






\setlength{\textwidth}{460pt}
\setlength{\oddsidemargin}{0pt}
\setlength{\evensidemargin}{0pt}
\setlength{\topmargin}{0pt}
\setlength{\textheight}{620pt}

\title{Properties of independence in $\mathrm{NSOP}_{3}$ theories}
\author{Scott Mutchnik}

\begin{document}

\begin{abstract}
We prove some results about the theory of independence in $\mathrm{NSOP}_{3}$ theories that do not hold in $\mathrm{NSOP}_{4}$ theories. We generalize Chernikov's work on simple and co-simple types in $\mathrm{NTP}_{2}$ theories to types with $\mathrm{NSOP}_{1}$ induced structure in $\mathrm{NDCTP}_{2}$ and $\mathrm{NSOP}_{3}$ theories, and give an interpretation of our arguments and those of Chernikov in terms of the characteristic sequences introduced by Malliaris. We then prove an extension of the independence theorem to types in $\mathrm{NSOP}_{3}$ theories whose internal structure is $\mathrm{NSOP}_{1}$. Additionally, we show that in $\mathrm{NSOP}_{3}$ theories with symmetric Conant-independence, finitely satisfiable types satisfy an independence theorem similar to one conjectured by Simon for invariant types in $\mathrm{NTP}_{2}$ theories, and give generalizations of this result to invariant and Kim-nonforking types.
\end{abstract}
\maketitle

\section{Introduction}

A central program in pure model theory is to develop the theory of independence, which originated within the stable theories, beyond stability and simplicity. This has been successful for the original notion of \textit{forking-independence} within \textit{$\mathrm{NTP}_2$ theories}: for example, Chernikov and Kaplan, in \cite{CK09}, show that forking coincides with dividing in $\mathrm{NTP}_2$ theories; Ben-Yaacov and Chernikov, in \cite{BYC07}, give an independence theorem for forking-independence in $\mathrm{NTP}_2$ theories that is improved by Simon in \cite{Sim20}, and Chernikov, in \cite{Che14}, studies simple types in $\mathrm{NTP}_2$ theories and gives a characterization of $\mathrm{NTP}_2$ theories in terms of Kim’s lemma. In a different direction, Kaplan and Ramsey in \cite{KR17} extend the original theory of independence in simple theories to \textit{$\mathrm{NSOP}_1$ theories} by introducing the notion of \textit{Kim-independence}, described as forking-independence “at a generic scale.” Kaplan and Ramsey, in \cite{KR17}, show, using work of Chernikov and Ramsey in \cite{CR15}, that symmetry of Kim-independence characterizes the property $\mathrm{NSOP}_1$; they also show that the independence theorem for Kim-independence characterizes $\mathrm{NSOP}_1$. To give examples of further consequences of $\mathrm{NSOP}_1$ for the theory of Kim-independence, Kaplan and Ramsey in \cite{KR19} give a characterization of $\mathrm{NSOP}_1$ in terms of transitivity, Kaplan, Ramsey and Shelah in \cite{KRS19} give a characterization in terms of local character; Dobrowolski, Kim and Ramsey in \cite{DKR22} and Chernikov, Kim and Ramsey in \cite{CKR20} study independence over arbitrary sets in $\mathrm{NSOP}_{1}$ theories. Kruckman and Ramsey, in \cite{KR18}, prove an improved independence theorem, developed further by Kruckman, Tran and Walsberg in the appendix of \cite{KTW22}. Kim (\cite{K21}) initiates a theory of canonical bases. For extensions to positive logic, see \cite{KD22}, \cite{K22}, \cite{DGK23}; see also \cite{CK19} for extensions of Kim-independence to $\mathrm{NTP}_{2}$ theories. Beyond $\mathrm{NSOP}_{1}$ and $\mathrm{NSOP}_{2}$, the author in \cite{NSOP2} develops a theory of independence in $\mathrm{NSOP}_2$ theories and uses this to show that every $\mathrm{NSOP}_2$ theory is in fact $\mathrm{NSOP}_1$, and Kim and Lee, in \cite{KL22}, use remarks by the author in \cite{NSOP2} to develop Kim-forking and Kim-dividing in the \textit{$\mathrm{NATP}$ theories} introduced by Ahn and Kim in \cite{AK20} and further devloped by Ahn, Kim and Lee in \cite{AK21}, as well as the related $\mathrm{NDCTP}_2$ theories introduced by the author in \cite{NSOP2}.

However, much remains to be understood about the theory of independence in Shelah's strong order hierarchy, $\mathrm{NSOP}_n$, for $n \geq 3$. In \cite{GFA}, the author relativizes the theory of Kim-independence in \cite{CR15}, \cite{KR17} by developing a theory of independence relative to abstract independence relations generalizing the free amalgamation axioms of \cite{Co15}; though the theories to which this result applies may be strictly $\mathrm{NSOP}_4$ ($\mathrm{NSOP}_4$ and $\mathrm{SOP}_3$) as well as $\mathrm{NSOP}_1$, $\mathrm{NSOP}_4$ is not actually used in the result. The author also observes in the same paper using the generalization in \cite{NSOP2} of the arguments of \cite{Co15} that theories possessing independence properties with no known $\mathrm{NSOP}_{4}$ counterxamples--symmetric Conant-independence and the strong witnessing property that generalizes Kim's lemma--cannot be strictly $\mathrm{NSOP}_{3}$. Conant-independence, which can be described as forking-independence at a \textit{maximally} generic scale and is grounded in the \textit{strong Kim-dividing} of \cite{KRS19}, is introduced in that paper (based on a similar notion with the same name developed in \cite{NSOP2} to show the equivalence of $\mathrm{NSOP}_1$ and $\mathrm{NSOP}_2$) as a potential extension of the theory of Kim-independence beyond $\mathrm{NSOP}_1$. There the author shows that a theory where Conant-independence is symmetric must be $\mathrm{NSOP}_4$, and characterizes Conant-independence in most of the known examples of $\mathrm{NSOP}_4$ theories, where it is symmetric. This leaves open the question of whether Conant-independence is symmetric in any $\mathrm{NSOP}_4$ theory, a question intimately related to the question of whether any $\mathrm{NSOP}_3$ theory is $\mathrm{NSOP}_2$. In \cite{CKR23}, Kaplan, Ramsey and Simon have recently shown that all binary $\mathrm{NSOP}_{3}$ theories are simple, by developing a theory of independence for a class of theories containing all binary theories. In \cite{SOPEXP} the author develops the independence relations $\ind^{\eth^n}$, based on the same idea of forking-independence at a maximally generic scale, shows that any theory where $\ind^{\eth^n}$ is symmetric must be $\mathrm{NSOP}_{2^{n+1}+1}$, and characterizes $\ind^{\eth^n}$ in the classical examples of  $\mathrm{NSOP}_{2^{n+1}+1}$ theories, leaving open the question of whether $\ind^{\eth^n}$ is symmetric in any $\mathrm{NSOP}_{2^{n+1}+1}$ theory. (Demonstrating robustness of the result, the author proves a similar result for left and right transitivity.) In \cite{MS17}, Malliaris and Shelah initiate a structure theory for $\mathrm{NSOP}_3$ theories, though instead of a theory of independence along the lines of forking-independence or Kim-independence, they show symmetric inconsistency for higher formulas, a result on sequences of realizations of two invariant types yielding inconsistent instances of two formulas, rather than any kind of indiscernible sequence witnessing the dividing of a single formula. Malliaris, in \cite{Mal10b}, also investigates the \textit{graph-theoretic} depth of independence in $\mathrm{NSOP}_{3}$ theories. The pressing question remains, for $n \geq 3$: using the assumption that $T$ is $\mathrm{NSOP}_n$ (and possibly some additional assumptions that are not already known to collapse $\mathrm{NSOP}_n$ into $\mathrm{NSOP}_1$), can we show any properties of $T$ that fit into the program of generalizing the properties of independence in stable or simple theories, as was done for $\mathrm{NSOP}_{1}$ and $\mathrm{NSOP}_{2}$ theories?

The aim of this paper is to show that this question is tractable for $\mathrm{NSOP}_3$ theories, whose equivalence with $\mathrm{NSOP}_1$ remains open. We prove three results on $\mathrm{NSOP}_3$ theories, two about the $\mathrm{NSOP}_1$ ``building blocks" of $\mathrm{NSOP}_3$ theories and the independence relations between them in the global $\mathrm{NSOP}_3$ structure, and one about $\mathrm{NSOP}_3$ theories with symmetric Conant-independence. All three of these results truly use $\mathrm{NSOP}_3$ in that they fail when the assumption is relaxed to $\mathrm{NSOP}_4$ (and the first two results, though both concerning the $\mathrm{NSOP}_1$ local structure, involve separate uses of $\mathrm{NSOP}_3$ in a sense that will become apparent). The properties of $\mathrm{NSOP}_{3}$ theories demonstrated by the first and third result will also appear similar to properties known or proposed for $\mathrm{NTP}_2$ theories. In contrast to this similarity, it is open (Problem 3.16 of \cite{Che14}) whether $\mathrm{NTP}_{2} \cap \mathrm{NSOP}_n$ coincides with simplicity for $n \geq 3$: a positive answer to this question would suggest that $\mathrm{NSOP}_n$ is much \textit{different} from $\mathrm{NTP}_2$.

We give an outline of the paper.

In Section 3 we generalize work of Chernikov (\cite{Che14}) on \textit{simple types} in $\mathrm{NTP}_{2}$ theories. As the property $\mathrm{NDCTP}_{2}$ is a subclass of $\mathrm{NATP}$ which is one potential solution $X$ to \cite{Kr}'s proposed analogy ``simple : $\mathrm{NTP}_{2}$ :: $\mathrm{NSOP}_{1}$ : $X$," (\cite{NSOP2}, \cite{KL22}), it is to be expected that the analogous result for ``$\mathrm{NSOP}_{1}$ types" holds for $\mathrm{NDCTP}_{2}$ theories. What is not predicted by this analogy is that the same result on $\mathrm{NSOP}_1$ local structure holds in $\mathrm{NSOP}_3$ theories. Instead of generalizing the definition of simple types, we introduce a definition schema for the \textit{internal} properties of a (partial) type, which is more natural in that it refers to the global properties of a structure associated with that type. (We could also have generalized the definition of simple types to $\mathrm{NSOP}_{1}$ and gotten the same conclusion; see Remark \ref{4-qe}.) We show that just as Chernikov implicitly showed in \cite{Che14} for internally simple types in $\mathrm{NTP}_{2}$ theories, the assumption of $\mathrm{NSOP}_{3}$ controls how internally $\mathrm{NSOP}_{1}$ types relate to the rest of the structure:

\begin{theorem}\label{4-main0}
    Let $T$ be $\mathrm{NSOP}_{3}$, and $p(x)$ an internally $\mathrm{NSOP}_{1}$ type. Then $p(x)$ is co-$\mathrm{NSOP}_{1}$.
\end{theorem}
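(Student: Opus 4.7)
My plan is to argue by contradiction, adapting Chernikov's strategy for co-simple types in $\mathrm{NTP}_{2}$ theories --- with $\mathrm{NSOP}_{1}$ in place of simplicity and $\mathrm{NSOP}_{3}$ in place of $\mathrm{NTP}_{2}$ --- and using the characteristic sequence framework promised in the introduction as bookkeeping. Suppose $p(x)$ is internally $\mathrm{NSOP}_{1}$ but not co-$\mathrm{NSOP}_{1}$, so some formula $\varphi(x,y)$, with $x$ in the sort of $p$, has $\mathrm{SOP}_{1}$ witnessed by a tree $(b_\eta)_{\eta\in 2^{<\omega}}$: each branch type $\{\varphi(x,b_{\sigma\upharpoonright n})\}_n$ is consistent with $p$, and whenever $\eta\frown 0\preceq \mu$ and $\eta\frown 1 \preceq \nu$ the pair $\{\varphi(x,b_\mu),\varphi(x,b_\nu)\}$ is inconsistent. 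For each branch $\sigma$ fix $a_\sigma \models p$ realizing the branch type, and pass via the tree modeling property to a tree-indiscernible enrichment in which the $a_\sigma$'s are coherent along branches.

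The second step is to produce a common internal amalgam. The internal $\mathrm{NSOP}_{1}$ assumption on $p$ supplies an independence theorem for Kim-independence in the induced structure: choosing two branches $\sigma^{0},\sigma^{1}$ splitting at some level above a base $M\subseteq p(\mathfrak{C})$ over which $a_{\sigma^{0}}$ and $a_{\sigma^{1}}$ are Kim-independent internally, one amalgamates them to obtain $a^\ast\models p$ realizing $\varphi(x,b_{\sigma^{0}\upharpoonright n})\wedge\varphi(x,b_{\sigma^{1}\upharpoonright n})$ for all $n$. Iterating through the tree yields positivity at many pairs in the Malliaris characteristic sequence $P^{\varphi}_{2}(y_1,y_2) := \exists x\,(\varphi(x,y_1)\wedge\varphi(x,y_2))$, indexed by compatible antichains in $2^{<\omega}$, running alongside the original $\mathrm{SOP}_{1}$ inconsistency pattern.

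The crucial and most difficult step is then to combine these two patterns into an $\mathrm{SOP}_{3}$ witness in $T$ for a formula $\psi(y_1,y_2)$ built from $\varphi$ via the characteristic sequence: the amalgamation supplies a linearly ordered, forward-consistent chain of parameter pairs, while the wedge inconsistencies of $\mathrm{SOP}_{1}$ supply a strict order with inconsistency at the right distance, and the two must be aligned so that the inconsistency lands at depth three. The principal obstacle is precisely arranging the amalgamations so that the resulting chain realizes $\mathrm{SOP}_{3}$ rather than degenerating to $\mathrm{SOP}_{2}$ or $\mathrm{SOP}_{1}$; this requires coordinating amalgamation choices along multiple splits of the tree, which is exactly the combinatorial task the characteristic sequences are designed to track, translating internal amalgamations into positive higher edges and making the propagation from leaf-level $\mathrm{SOP}_{1}$ to global $\mathrm{SOP}_{3}$ visible. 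Once assembled, the configuration contradicts $\mathrm{NSOP}_{3}$ of $T$.
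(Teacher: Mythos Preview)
Your proposal has a genuine gap at the amalgamation step, and it stems from applying the internal $\mathrm{NSOP}_1$ machinery on the wrong side of $\varphi(x,y)$.

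In the definition of co-$\mathrm{NSOP}_1$ (Definition~\ref{4-consop1}), it is the \emph{parameters} $b_\eta$ that lie in $p(\mathbb{M})$; the variable $x$ is unrestricted, and consistency of a branch $\{\varphi(x,b_{\sigma\upharpoonright n})\}_n$ means consistency in $\mathbb{M}$, not consistency with $p$. So there is no reason your branch realizations $a_\sigma$ belong to $p(\mathbb{M})$, and the independence theorem internal to $\mathcal{M}_p$ cannot be applied to them. More decisively, even granting $a_\sigma\models p$, the element $a^\ast$ you describe cannot exist: if $\sigma^0,\sigma^1$ split at $\eta$, then $b_{\eta\frown 1}$ lies on branch $\sigma^1$, and the $\mathrm{SOP}_1$ condition makes $\{\varphi(x,b_{\sigma^0\upharpoonright n}),\varphi(x,b_{\eta\frown 1})\}$ inconsistent for every $n>|\eta|$. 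Nothing realizes both branches. In characteristic-sequence language, the $\mathrm{SOP}_1$ tree already forces $\neg R_2(b_\mu,b_\nu)$ across the split, so your plan to manufacture $R_2$-positivity there via amalgamation is self-contradictory.

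The paper's argument applies the internal $\mathrm{NSOP}_1$ theory to the \emph{parameter} side. The characteristic sequence $R_n(y_1,\ldots,y_n)=\exists x\bigwedge_i\varphi(x,y_i)$, restricted to $p(\mathbb{M})$, is a definable hypergraph in $\mathcal{M}_p$; the $\mathrm{SOP}_1$ tree gives $R_n$-cliques along paths and $R_2$-antiedges across splits. One then runs the Chernikov--Ramsey and Kaplan--Ramsey arguments \emph{inside} $\mathcal{M}_p$ to extract a model $\mathcal{M}$ of its theory and two $\mathcal{M}$-invariant types $p_0,p_1$ with $p_0|_{\mathcal{M}}=p_1|_{\mathcal{M}}$, whose Morley sequences are respectively an $R_n$-clique and an $R_2$-anticlique. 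The independence theorem in $\mathcal{M}_p$ is then used not to glue realizations of $\varphi$, but in Conant's zig-zag construction on parameters: build $c_0,c_1,\ldots$ and $d_0,d_1,\ldots$ in $\mathcal{M}_p$ so that each mixture $d_0,\ldots,d_m,c_{m+1},\ldots,c_n$ is a $p_0$-Morley sequence (hence an $R_n$-clique) while each reversed pair $c_m,d_n$ with $m<n$ begins a $p_1$-Morley sequence (hence $\neg R_2(c_m,d_n)$). This is the compatible order property for the $R_n$, which by Fact~\ref{4-sop3fact} yields $\mathrm{SOP}_3$ in $T$.
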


See Definitions \ref{4-consop1} and \ref{4-internally}. When $T$ is only assumed to be $\mathrm{NSOP}_{4}$, we give an internally simple type $p(x)$ for which this fails.

We then interpret the proof of this result as well the results of Chernikov in \cite{Che14} (and their direct generalization to $\mathrm{NDCTP}_{2}$) in terms of the \textit{characteristic sequences} introduced by Malliaris in \cite{Mal10} to relate ``classification-theoretic properties" of a theory to the ``graph-theoretic properties" of hypergraphs, and used by Malliaris in \cite{Mal12} to study Keisler's order. Internally to a type $p(x)$, what the ambient theory perceives to be an instance of co-$\mathrm{SOP}_{1}$ (an instance of $\mathrm{SOP}_{1}$ with parameters realizing $p(x)$) is simply a definable hypergraph making no reference to consistency. Model-theoretic properties of a theory will give control of the graph-theoretic structure of hypergraphs definable in that theory, similarly to Shelah's classic result that an definable bipartite graph with the order property in an $\mathrm{NSOP}$ theory must even have the independence property. Applied in the case where the model-theoretic properties, such as simplicity and $\mathrm{NSOP}_{1}$, are assumed of the internal structure on $p(x)$, this will illuminate the proof in \cite{Che14} of co-simplicity in $\mathrm{NTP}_{2}$ theories and our proof of co-$\mathrm{NSOP}_{1}$ in $\mathrm{NDCTP}_{2}$ and $\mathrm{NSOP}_{3}$ theories.

In Section 4, we discuss how internally $\mathrm{NSOP}_{1}$ types interrelate within the ambient structure of a $\mathrm{NSOP}_{3}$ theory, showing that their behavior is similar to how they would interrelate in a globally $\mathrm{NSOP}_{1}$ theory. By the Kim-Pillay characterization of $\mathrm{NSOP}_{1}$, Theorem 9.1 of \cite{KR17}, for no reasonable notion of independence could a full independence theorem hold in an $\mathrm{SOP}_{1}$ (that is, non-$\mathrm{NSOP}_{1}$) theory. However, we prove an independence theorem between internally $\mathrm{NSOP}_{1}$ types in $\mathrm{NSOP}_{3}$ theories (here, $a \subset p(\mathbb{M})$ means that $a$ is a tuple obtained by concatenating tuples realizing $p$ whose coordinates are in $\mathbb{M}$, as the standard model-theoretic notation would suggest):

\begin{theorem}\label{4-main00}
    Let $T$ be $\mathrm{NSOP}_{3}$, and let $p_{1}, p_{2}, p_{3}$ be internally $\mathrm{NSOP}_{1}$ types over $M$. Let $a_{1} \equiv_{M} a'_{1} \subset p_{1}(\mathbb{M})$, $ a_{2} \subset p_{2}(\mathbb{M})$,  $ a_{3} \subset p_{3}(\mathbb{M})$.  If $a_{1} \ind^{K^{*}}_{M} a_{2}$, $a'_{1} \ind^{K^{*}}_{M} a_{3}$, $a_{2} \ind^{K^{*}}_{M} a_{3}$, there is some $a''_{1}$ with $a''_{1} \models \mathrm{tp}(a_{1}/Ma_{2}) \cup \mathrm{tp}(a'_{1}/Ma_{3})$. Moreover, $a''_{1}$ can be chosen with $a_{2}a_{3} \ind^{K^{*}}_{M}a''_{1}$, $a_{2}a''_{1} \ind^{K^{*}}_{M} a_{3}$ and $a_{3}a''_{1} \ind^{K^{*}}_{M} a_{2}$.
\end{theorem}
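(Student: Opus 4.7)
The plan is to adapt the Kaplan--Ramsey proof of the independence theorem for Kim-independence in $\mathrm{NSOP}_{1}$ theories, replacing the role of global $\mathrm{NSOP}_{1}$ by the combination of \emph{internal} $\mathrm{NSOP}_{1}$-ness of the three types and ambient $\mathrm{NSOP}_{3}$-ness of $T$. The first step is to set up the local amalgamation. Since each $p_{i}(\mathbb{M})$ has $\mathrm{NSOP}_{1}$ induced theory, Theorem \ref{4-main0} allows one to identify the ambient relation $\ind^{K^{*}}$, restricted to tuples from a single $p_{i}$, with Kim-independence in the induced structure. Consequently, symmetry, extension, and the Kaplan--Ramsey independence theorem are all available for $\ind^{K^{*}}$-computations that remain inside a single $p_{i}$; in particular, inside $p_{1}(\mathbb{M})$ we have the amalgamation machinery needed to build the candidate $a_{1}''$ realizing the required type over $Ma_{2}$.

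Next, I would run a zig-zag construction. Fix global $M$-finitely-satisfiable extensions $q_{2} \supseteq \mathrm{tp}(a_{2}/M)$ and $q_{3} \supseteq \mathrm{tp}(a_{3}/M)$ witnessing $a_{2} \ind^{K^{*}}_{M} a_{3}$, and an invariant extension of $\mathrm{tp}(a_{1}/M)$. Along a Morley sequence in $q_{2}$ (respectively $q_{3}$), internal $\mathrm{NSOP}_{1}$-amalgamation on $p_{1}$ produces a chain of candidates converging to an amalgam, provided the one ambient obstruction---consistency of $\mathrm{tp}(a_{1}/Ma_{2}) \cup \mathrm{tp}(a_{1}'/Ma_{3})$---can be resolved. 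This reduction is formal once the internal independence theorem is in hand.

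The principal obstacle is therefore this consistency. If it fails, there are formulas $\varphi(x, a_{2}) \in \mathrm{tp}(a_{1}/Ma_{2})$ and $\psi(x, a_{3}) \in \mathrm{tp}(a_{1}'/Ma_{3})$ with $\varphi(x, a_{2}) \wedge \psi(x, a_{3})$ inconsistent. The plan is to iterate along Morley sequences in $q_{2}$ and $q_{3}$ and to substitute using the internal $\mathrm{NSOP}_{1}$ structure of $p_{1}$, extracting a triangular inconsistency pattern in the ambient theory that encodes $\mathrm{SOP}_{3}$; this parallels the mechanism by which $\mathrm{NSOP}_{3}$ forces co-$\mathrm{NSOP}_{1}$ in Theorem \ref{4-main0}, and it is precisely where the hypothesis is used and why the statement would fail if $\mathrm{NSOP}_{3}$ were weakened to $\mathrm{NSOP}_{4}$. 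Finally, the ``moreover'' clauses are obtained by building $a_{1}''$ along a generic Morley sequence rather than as a bare witness: symmetry of $\ind^{K^{*}}$ on each internal $\mathrm{NSOP}_{1}$ structure, together with extension applied to $q_{2}$, $q_{3}$ and to the invariant extension of $\mathrm{tp}(a_{1}/M)$, simultaneously yields $a_{2} a_{3} \ind^{K^{*}}_{M} a_{1}''$, $a_{2} a_{1}'' \ind^{K^{*}}_{M} a_{3}$, and $a_{3} a_{1}'' \ind^{K^{*}}_{M} a_{2}$.
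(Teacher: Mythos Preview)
Your high-level strategy---use Theorem \ref{4-main0} to get co-$\mathrm{NSOP}_{1}$ and hence local independence machinery, and then invoke $\mathrm{NSOP}_{3}$ for the amalgamation---has the right shape, but two of the key steps are misidentified.

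First, co-$\mathrm{NSOP}_{1}$ of $p_{i}$ is a statement about Kim-dividing in the \emph{ambient} theory with parameters from $p_{i}(\mathbb{M})$; it does not let you ``identify $\ind^{K^{*}}$ with Kim-independence in the induced structure'' and then quote the internal Kaplan--Ramsey independence theorem. What the paper actually needs, and proves from scratch (Claims \ref{4-wit4} and \ref{4-ind4}), is an independence theorem \emph{between two} co-$\mathrm{NSOP}_{1}$ types in the ambient theory: given $a \equiv_{M} a' \subseteq p(\mathbb{M})$, $b, c \subseteq q(\mathbb{M})$ with the relevant independences, one can amalgamate. This is what lets you move parameters from $p_{2}$ and $p_{3}$ around, and it already consumes a Conant-style zig-zag plus Fact \ref{4-sop3fact}. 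Your plan to stay ``inside a single $p_{i}$'' does not address this, since $a_{2}$ and $a_{3}$ live in different types.

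Second, and more important, the final use of $\mathrm{NSOP}_{3}$ for consistency of $q_{1}(y,z)\cup q_{2}(x,z)\cup q_{3}(x,y)$ is \emph{not} the compatible-order-property argument from Theorem \ref{4-main0}; the paper explicitly switches to the Evans--Wong $3$-cycle trick. One defines a notion of \emph{generic triple} $(A,B,C)$ (mutually indiscernible invariant Morley sequences), proves an extension lemma (Claim \ref{4-theclaim4}) saying a generic triple can absorb a new $b'\equiv_{MA} b$ with $b'\ind^{K^{*}}_{M} B$, and uses it to build sequences $(a^{i}),(b^{i}),(c^{i})$ so that the partial type $\Phi(x^{1},y^{1},z^{1};x^{2},y^{2},z^{2}) = q_{2}(x^{2},z^{1})\cup q_{3}(x^{1},y^{2})\cup q_{1}(y^{1},z^{2})$ has arbitrarily long chains. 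Then $\mathrm{NSOP}_{3}$ forces a $3$-cycle for $\Phi$, and a $3$-cycle for $\Phi$ is \emph{literally} a realization of $q_{1}\cup q_{2}\cup q_{3}$. Your contrapositive plan (assume inconsistency, exhibit an $\mathrm{SOP}_{3}$ array via Fact \ref{4-sop3fact}) would require aligning the inconsistencies along a single linear order, and it is unclear how the three unrelated types $p_{1},p_{2},p_{3}$ would support that; the paper avoids this entirely by using the $3$-cycle formulation. The ``moreover'' clause is then handled separately by replacing each $a_{i}$ by a coheir Morley sequence, applying consistency, and extracting mutually indiscernible sequences via the array-indiscernibility lemma (Fact \ref{4-indisc4}).
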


Here $\ind^{K^{*}}$ is Conant-independence, Definition \ref{4-conantindependence}. Motivating this result, in an $\mathrm{NSOP}_{1}$ theory, Conant-independence coincides with Kim-independence, $\ind^{K}$ and is symmetric; compare \cite{KR17}, Theorem 6.5, which characterizes $\mathrm{NSOP}_{1}$. (Between tuples of realizations of two co-$\mathrm{NSOP}_{1}$ types $p_{i}, p_{j}$ it coincides with Kim-diving independence.) While in proving this result, we apply Theorem \ref{4-main0}, it does not just follow from co-$\mathrm{NSOP}_{1}$: we exhibit internally stable types $p_{1}, p_{2}, p_{3}$ in an $\mathrm{NSOP}_{4}$ theory $T$ for which this fails. This independence theorem for internally $\mathrm{NSOP}_{1}$ types in $\mathrm{NSOP}_{3}$ theories is not only of interest to the program of extending the theory of independence beyond $\mathrm{NSOP}_{1}$ theories; it is also of interest to the question of whether $\mathrm{NSOP}_{3}$ coincides with $\mathrm{NSOP}_{2} = \mathrm{NSOP}_{1}$. One potential approach to building a strictly $\mathrm{NSOP}_{3}$ theory (that is, one that is $\mathrm{SOP}_{2}$) is by starting with $\mathrm{NSOP}_{2}$ structures and somehow combining them to obtain a failure of $\mathrm{NSOP}_{1}$ in the form of a failure of the independence theorem: this result says that it is impossible to obtain an $\mathrm{NSOP}_{3}$ theory from such a construction. It may be of interest to ask whether there is any connection between this result on stability-theoretic independence and Theorem 7.7 of \cite{Mal10b}, which concerns the graph-theoretic depth of independence in $\mathrm{NSOP}_{3}$ theories.

In Section 5, we consider $\mathrm{NSOP}_{3}$ theories where Conant-independence is symmetric. It is natural to assume this, as there is no known $\mathrm{NSOP}_{4}$ theory where Conant-independence or Conant-dividing independence is not symmetric. Simon, in \cite{Sim20}, proves an improved independence theorem for $\mathrm{NTP}_{2}$ theories, Fact \ref{4-indntp2}. There Simon also poses an existence question, Question \ref{4-simon?}, for invariant types with the same Morley sequence in $\mathrm{NTP}_{2}$ theories (i.e. $M$-invariant types $p$ and $q$ with $p^{(\omega)}|_{M} = q^{(\omega)}|_{M}$). An independence theorem for forking-independence, for invariant types with the same Morley sequence in $\mathrm{NTP}_{2}$ theories, would follow from a positive answer to this question, by Simon's result. In an $\mathrm{NSOP}_{3}$ theory with symmetric Conant-independence, we prove a similar independence theorem for Conant-independence between \textit{finitely satisfiable} types with the same Morley sequence:

\begin{theorem} \label{4-main000}
Let $T$ be an $\mathrm{NSOP}_{3}$ theory, and assume $\ind^{K^{*}}$ is symmetric. Suppose $p$ and $q$ are $M$-finitely satisfiable (global) types with $p^{(\omega)}|_{M} = q^{(\omega)}|_{M}$, and let $a, b \supseteq M$ be small supersets of $M$ with $a \ind^{K}_{M} b$. Then there is $c \models p(x)|_{a} \cup q(x)|_{b}$ with $c \ind^{K^{*}}_{M} ab$.

\end{theorem}

This fails when $T$ is the model companion of triangle-free graphs, which is $\mathrm{NSOP}_{4}$ with symmetric (indeed trivial) Conant-independence. We also give an extension of this result from finitely satisfiable types to Kim-nonforking types when Conant-dividing independence is symmetric, which has the advantage of exploiting the full force of symmetry for Conant-independence. While this result is again of interest to the question of extending the theory of independence beyond $\mathrm{NSOP}_{1}=\mathrm{NSOP}_{2}$, since there is precedent (see \cite{NSOP2}) for using facts about independence to prove the equivalence of classification-theoretic dividing lines, it is also of interest to another open question, whether an $\mathrm{NSOP}_{3}$ theory with symmetric Conant-independence is $\mathrm{NSOP}_{1}$.

One final remark: in Theorem 3.15 of \cite{GFA}, a strategy was suggested for proving the equivalence of $\mathrm{NSOP}_{3}$ and $\mathrm{NSOP}_{2}$ by proving two facts that have no known $\mathrm{NSOP}_{4}$ counterexamples, symmetry for Conant-independence and the strong witnessing property, for all $\mathrm{NSOP}_{4}$ or even all $\mathrm{NSOP}_{3}$ theories. The results of this paper suggest a different approach, via finding properties of independence in $\mathrm{NSOP}_{3}$ theories that distinguish them from $\mathrm{NSOP}_{4}$ theories.

\section{Preliminaries}

Notations are standard. We will need some basic definitions and facts about some standard relations between sets, as well as some facts about $\mathrm{NSOP}_{1}$ and $\mathrm{NSOP}_{3}$ theories.

\textbf{Relations between sets}

Adler, in \cite{A09}, defines some properties of abstract ternary relations $A \ind_{M} B$ between sets. In our case, we will assume $M$ is a model, and we will only need to refer to a few of these properties by name:

Left extension: If $A\ind_{M} B $ and $A \subseteq C$, there is some $B' \equiv_{A} B$ with  $C\ind_{M} B' $.

Right extension: If $A\ind_{M} B $ and $B \subseteq C$, there is some $A' \equiv_{B} A$ with  $A'\ind_{M} C $.

Symmetry: If $A \ind_{M} B$ then $B \ind_{M} A$.

Chain condition with respect to invariant Morley sequences: If $A \ind_{M} B$ and $I = \{B_{i}\}_{i < \omega}$ is an invariant Morley sequence over $M$ (see below) with $B_{0} = B$, then there is $I' \equiv_{MB} I$ indiscernible over $MA$ with $A \ind_{M} I'$.

We will refer to various relations between sets. For the convenience of the reader, here is an index of the notation to be used. Kim-independence and Kim-dividing, as well as Conant-independence and Conant-dividing, will be defined later in this section.

$a \ind_{M}^{i} b$ if $\mathrm{tp}(a/Mb)$ extends to a global $M$-invariant type

$a \ind^{u}_{M} b$ if $\mathrm{tp}(a/Mb)$ extends to a global $M$-finitely satisfiable type

$a \ind^{f}_{M} b$ if $a$ is forking-independent from $b$ over $M$

$a \ind^{K}_{M} b$ if $a$ is Kim-independent from $b$ over $M$

$a \ind^{K^{*}}_{M} b$ if $a$ is Conant-independent from $b$ over $M$

$a \ind^{Kd}_{M} b$ if $\mathrm{tp}(a/Mb)$ contains no formulas Kim-dividing over $M$

$a \ind^{K^{*}d}_{M} b$ if $\mathrm{tp}(a/Mb)$ contains no formulas Conant-dividing over $M$.

We will use $\ind^{K^{+}}$ and $\ind^{K^{+}u}$ as ad-hoc notations in proofs; these will be defined in the course of those proofs.

We give an overview of some basic definitions. A \textit{global type} $p(x)$ is a complete type over the sufficiently saturated model $\mathbb{M}$. For $M \prec \mathbb{M}$, a global type $p(x)$ is \textit{invariant} over $M$ if $\varphi(x, b) \in p(x)$ and $b' \equiv_{M} b$ implies $\varphi(x, b') \in p(x)$. One class of types invariant over $M$ is the class of types that are \textit{finitely satisfiable} over $M$, meaning any formula in the type is satisfied by some element of $M$. We say an infinite sequence $\{b_{i}\}_{i \in I}$, is an \textit{invariant Morley sequence} over $M$ (in the type $p(x)$) if there is a fixed global type $p(x)$ invariant over $M$ such that $b_{i} \models p(x)|_{M\{b_{j}\}_{j < i}}$ for $i \in I$. If $p(x)$ is finitely satisfiable over $M$, we say $\{b_{i}\}_{i \in I}$ is a \textit{coheir Morley sequence} or \textit{finitely satisfiable Morley sequence} over $M$. Invariant Morley sequences over $M$ are indiscernible over $M$, and the  EM-type of an invariant Morley sequence over $M$ depends only on $p(x)$. For $p(x), q(y)$ $M$-invariant types, $p(x) \otimes q(y)$ is defined so that $ab \models p(x)\otimes q(y)|_{A}$ for $M \subseteq A$ when $b \models q(y)|_A$ and $a \models p(x)|_{Ab}$; $p(x) \otimes q(y)$ is then invariant. Similarly, for $I$ a linearly ordered set and $p(x)$ an $M$-invariant type, $p^{(I)}(x)$ is the $M$-invariant type such that $\{a_{i}\}_{i \in I} \models p^{(I)}(x)|_{A}$ for $M \subseteq A$ when, for all $i \in I$, $a_{i} \models p(x)|_{A\{a_{j}\}_{j < i}}$.

Both $\ind^{i}$ and $\ind^{u}$ have right extension, but it is sometimes advantageous to work with coheir Morley sequences rather than general invariant Morley sequences because $\ind^{u}$ is also known to have left extension.

\textbf{Kim-independence and $\mathrm{NSOP}_{1}$}

We assume knowledge of basic simplicity theory and the definition of forking-independence. An extension of the theory of independence from simple theories to \textit{$\mathrm{NSOP}_{1}$ theories} was developed by Kaplan and Ramsey in \cite{KR17}, via the definition of \textit{Kim-independence}.

\begin{definition}\label{3-nsop1}
A theory $T$ is $\mathrm{NSOP}_{1}$ if there does not exist a formula $\varphi(x, y)$ and tuples $\{b_{\eta}\}_{\eta \in 2^{<\omega}}$ such that $\{\varphi(x, b_{\sigma \upharpoonleft n})\}_{n \in \omega}$ is consistent for any $\sigma \in 2^{\omega}$, but for any $\eta_{2} \unrhd \eta_{1} \smallfrown \langle 0\rangle$, $\{\varphi(x, b_{\eta_{2}}), \varphi(x, b_{\eta_{1} \smallfrown \langle 1\rangle})\}$ is inconsistent. Otherwise it has $\mathrm{SOP}_{1}$.
\end{definition}

\begin{definition}(\cite{KR17})
A formula $\varphi(x, b)$ \emph{Kim-divides} over $M$ if there is an invariant Morley sequence $\{b_{i}\}_{i \in \omega}$ over $M$ starting with $b$ (said to \emph{witness} the Kim-dividing) such that $\{\varphi(x, b_{i})\}_{i \in \omega}$ is inconsistent. A formula  $\varphi(x, b)$ \emph{Kim-forks} over $M$ if it implies a (finite) disjunction of formulas Kim-dividing over $M$. We write $a \ind^{K}_{M} b$, and say that $a$ is \emph{Kim-independent} from $b$ over $M$ if $\mathrm{tp}(a/Mb)$ does not include any formulas Kim-forking over $M$.
\end{definition}

Kim-independence in $\mathrm{NSOP}_{1}$ theories behaves, in many ways, like forking-independence in simple theories.

\begin{fact}\label{4-kimslemma}
(\cite{KR17}) Let $T$ be $\mathrm{NSOP}_{1}$. Then for any formula $\varphi(x,b)$ Kim-dividing over $M$, any invariant Morley sequence over $M$ starting with $b$ witnesses Kim-dividing of $\varphi(x, b)$ over $M$. Conversely, suppose that for any formula $\varphi(x,b)$ Kim-dividing over $M$, any invariant Morley sequence (even in a finitely satisfiable type) over $M$ starting with $b$ witnesses Kim-dividing of $b$ over $M$. Then $T$ is $\mathrm{NSOP}_{1}$.

It follows that Kim-forking coincides with Kim-dividing in any $\mathrm{NSOP}_{1}$ theory.
\end{fact}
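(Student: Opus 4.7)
The plan is to prove both directions of Kim's lemma for $\mathrm{NSOP}_{1}$ by converting a mismatch between invariant Morley sequences (some witnessing Kim-dividing, some not) into an explicit $\mathrm{SOP}_{1}$ tree, and conversely extracting such a mismatch from any $\mathrm{SOP}_{1}$ tree.

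For the forward direction, assume $T$ is $\mathrm{NSOP}_{1}$ and $\varphi(x,b)$ Kim-divides over $M$: there is some global $M$-invariant type $p(y)$ extending $\mathrm{tp}(b/M)$ whose Morley sequence $\{b_{i}\}_{i<\omega}$ makes $\{\varphi(x,b_{i})\}_{i<\omega}$ inconsistent. Suppose for contradiction that some other global $M$-invariant $q(y) \supseteq \mathrm{tp}(b/M)$ has a Morley sequence $\{b'_{i}\}_{i<\omega}$ for which $\{\varphi(x,b'_{i})\}_{i<\omega}$ is consistent. I would then build a tree $\{c_{\eta}\}_{\eta \in 2^{<\omega}}$ by induction on level, alternating the two invariant types. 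The intent is that along any branch $\sigma \in 2^{\omega}$ the sequence $\{c_{\sigma \upharpoonleft n}\}_{n<\omega}$ is arranged as a Morley sequence of $q$ over $M$, so that $\{\varphi(x, c_{\sigma\upharpoonleft n})\}_{n<\omega}$ is consistent by indiscernibility with $\{b'_{i}\}$; meanwhile at each node $\eta$, the left child $c_{\eta \smallfrown \langle 0\rangle}$ together with any $c_{\eta_{2}}$ for $\eta_{2} \unrhd \eta \smallfrown \langle 1\rangle$ sits inside a configuration locally $M$-conjugate to an initial segment of $\{b_{i}\}$, making $\{\varphi(x, c_{\eta \smallfrown\langle 0\rangle}), \varphi(x, c_{\eta_{2}})\}$ inconsistent. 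This yields the $\mathrm{SOP}_{1}$ pattern of Definition \ref{3-nsop1}, contradicting $\mathrm{NSOP}_{1}$.

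For the converse, assume every formula Kim-dividing over $M$ has all its invariant (even finitely satisfiable) Morley sequences witnessing dividing, and suppose $T$ is $\mathrm{SOP}_{1}$ via $\varphi(x,y)$ and $\{b_{\eta}\}_{\eta \in 2^{<\omega}}$. By a tree-modeling/Ramsey argument for strongly indiscernible trees, I may assume sufficient tree-indiscernibility. Then reading off two sequences starting from a common tuple: one along a branch (consistent under $\varphi$, by definition of $\mathrm{SOP}_{1}$) and one along the siblings $\{b_{0^{n} \smallfrown \langle 1 \rangle}\}_{n < \omega}$ of a left-leaning path (pairwise $\varphi$-inconsistent, by the incomparability clause). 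After extracting each of these to a coheir Morley sequence over an appropriate base, the first exhibits a non-witnessing invariant Morley sequence while the second shows that $\varphi(x,b)$ Kim-divides, contradicting the assumption. The Kim-forking-equals-Kim-dividing corollary then follows from Kim's lemma by a Ramsey argument: if $\varphi(x,b) \vdash \bigvee_{i=1}^{n} \psi_{i}(x,b)$ with each $\psi_{i}(x,b)$ Kim-dividing over $M$, take a single invariant Morley sequence $\{b_{j}\}_{j<\omega}$ of $b$ and note that each $\{\psi_{i}(x,b_{j})\}_{j<\omega}$ is inconsistent by the forward direction; a putative realizer of $\{\varphi(x,b_{j})\}_{j<\omega}$ would, after pigeonholing which disjunct holds at each $j$, yield a monochromatic infinite set realizing some $\{\psi_{i_{0}}(x,b_{j})\}_{j \in J}$, a contradiction.

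The main obstacle will be the tree construction in the forward direction: one must simultaneously arrange, via a careful inductive choice exploiting the invariance of both $p$ and $q$ and possibly an Erd\H{o}s--Rado/tree-modeling step after building an approximate version, that every branch realizes a $q$-Morley sequence (for consistency of path formulas) while every left-subtree over a right-branch pattern realizes a fragment of a $p$-Morley sequence (for inconsistency across incomparable siblings). This is the technical heart of the Kaplan--Ramsey argument and requires careful bookkeeping of which of $p,q$ is generating each level.
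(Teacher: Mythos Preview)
The paper states this as a cited Fact without its own proof. However, the paper does sketch the Kaplan--Ramsey arguments in the proof of the equivalences in Definition~\ref{4-consop1} (the directions $(2 \Rightarrow 3)$ and $(3 \Rightarrow 2)$ there), so a comparison is possible.

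Your converse direction and the Kim-forking-equals-Kim-dividing corollary are essentially correct and match the paper's sketch: following Proposition 3.14 of \cite{KR17}, one passes through the array characterization, Skolemizes, and uses ultrafilters on the two sequences extracted from the $\mathrm{SOP}_{1}$ configuration to produce two $M$-finitely satisfiable types with the same restriction to a suitable model, giving a failure of Kim's lemma.

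Your forward direction takes a different route from the paper (and from \cite{KR17}) and, as described, has a gap. The paper's sketch, following Proposition 3.15 of \cite{KR17}, does \emph{not} build an $\mathrm{SOP}_{1}$ tree directly from the two invariant types. Instead, given $M$-invariant $p_{0}$ (non-witnessing) and $p_{1}$ (witnessing) extending $\mathrm{tp}(b/M)$, one realizes the tensor product $(p_{0} \otimes p_{1})^{\mathbb{Z}}$ to produce an array $\{c_{0,i}, c_{1,i}\}_{i}$ as in condition (2) of Definition~\ref{4-consop1}: the $0$-row is consistent, the $1$-row is $k$-inconsistent, and $c_{0,j} \equiv_{M c_{0,<j} c_{1,<j}} c_{1,j}$ because both realize the common restriction over the earlier columns by invariance. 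The conversion from such an array to an $\mathrm{SOP}_{1}$ tree is then handled by the separately established equivalence $(1 \Leftrightarrow 2)$. Your proposal to go straight to a tree by ``alternating the two invariant types'' so that every branch is a $q$-Morley sequence while the relevant incomparable pairs sit inside $p$-Morley configurations does not obviously work: if you genuinely alternate $p$ and $q$ level by level, branches are not Morley sequences in $q$; and you have not explained how a node $c_{\eta_{2}}$ can simultaneously lie on a $q$-Morley branch and form a $p$-Morley pair with $c_{\eta \smallfrown \langle 0 \rangle}$ for every $\eta_{2} \unrhd \eta \smallfrown \langle 1 \rangle$. The array-then-tree route sidesteps exactly this tension, which is why Kaplan--Ramsey (and the paper's sketch) proceed that way.
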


\begin{fact}\label{4-symmetry}
(\cite{CR15}, \cite{KR17}) The theory $T$ is $\mathrm{NSOP}_{1}$ if and only if $\ind^{K}$ is symmetric.
\end{fact}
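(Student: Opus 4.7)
The proof splits into two directions requiring different techniques. For the forward direction, assuming $T$ is $\mathrm{NSOP}_{1}$ and aiming to show symmetry of $\ind^{K}$, the key tool is Kim's lemma (Fact \ref{4-kimslemma}). The plan is to first establish the independence theorem for Kim-independence: given $c_{1} \equiv_{M} c_{2}$ with $c_{1} \ind^{K}_{M} a$, $c_{2} \ind^{K}_{M} b$, and $a \ind^{K}_{M} b$, there exists an amalgam $c$ with $c \equiv_{Ma} c_{1}$ and $c \equiv_{Mb} c_{2}$. This is proved by a tree-building argument: given $\varphi(x,a) \in \mathrm{tp}(c_{1}/Ma)$ and $\psi(x,b) \in \mathrm{tp}(c_{2}/Mb)$ to be realized simultaneously, I would construct a spread-out array of invariant Morley sequences and invoke Kim's lemma on both $\varphi$ and $\psi$ to conclude consistency by compactness.

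From the independence theorem I would derive symmetry, together with its standard corollary, the chain condition for $\ind^{K}$ with respect to invariant Morley sequences. Suppose toward contradiction that $a \ind^{K}_{M} b$ but $b \nind^{K}_{M} a$, so some $\psi(y,a)$ with $\models \psi(b,a)$ Kim-divides over $M$, witnessed by an invariant Morley sequence $(a_{i})_{i<\omega}$ over $M$ with $a_{0} = a$ such that $\{\psi(y, a_{i})\}_{i < \omega}$ is inconsistent. Applying the chain condition to $a \ind^{K}_{M} b$, there is $(a'_{i}) \equiv_{Ma} (a_{i})$ that is $Mb$-indiscernible; but then $\models \psi(b, a'_{i})$ for all $i$ by $Mb$-indiscernibility, and $(a'_{i})$ has the same EM-type over $M$ as $(a_{i})$, contradicting inconsistency.

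For the reverse direction, I would argue contrapositively: from an $\mathrm{SOP}_{1}$-witness $\varphi(x,y)$ and tree $\{b_{\eta}\}_{\eta \in 2^{<\omega}}$ as in Definition \ref{3-nsop1} (passing to a strongly indiscernible refinement via Ramsey), construct an asymmetric pair. Taking $a$ to realize $\{\varphi(x, b_{0^{n}})\}_{n<\omega}$ along the all-zeros path and $b = b_{\langle 1\rangle}$, the inconsistency condition of Definition \ref{3-nsop1} forces $\varphi(x, b)$ to Kim-divide over an appropriate base (witnessed by an invariant Morley sequence generated from the tree's $\langle 1\rangle$-siblings along the all-zeros path), while consistency along that same path prevents the symmetric failure, yielding the required asymmetric pair.

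The main obstacle is the independence theorem used in the forward direction: its proof is the substantive content of Kim-independence theory in $\mathrm{NSOP}_{1}$ theories, requiring careful coordination of Morley-sequence arrays, Ramsey-style extractions to obtain mutual indiscernibility, and a simultaneous application of Kim's lemma to both amalgamating formulas. Once the independence theorem is in hand, both the symmetry derivation and the construction in the reverse direction are comparatively routine combinatorial exercises; essentially all the real technical weight lies in the independence theorem itself and its underlying Kim-division control via Fact \ref{4-kimslemma}.
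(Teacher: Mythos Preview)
The paper states this as a cited fact without proof, so I compare your proposal to the standard literature proofs in \cite{CR15} and \cite{KR17}.

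Your forward direction has a genuine circularity. You propose to first prove the independence theorem and then derive symmetry from it via a chain-condition argument. But in \cite{KR17} the independence theorem (their Section 6) is proven \emph{after} symmetry (their Theorem 5.16), and its proof uses symmetry explicitly: the weak independence theorem (their Proposition 6.1) begins by taking an invariant Morley sequence in the first coordinate that is indiscernible over the second, which is exactly the step requiring symmetry. Your sketch of the independence theorem (``construct a spread-out array of invariant Morley sequences and invoke Kim's lemma on both formulas'') does not indicate how you would avoid this dependence.

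The circularity shows up concretely in your symmetry argument itself. You write: ``Applying the chain condition to $a \ind^{K}_{M} b$, there is $(a'_{i}) \equiv_{Ma} (a_{i})$ that is $Mb$-indiscernible.'' But the chain condition (as stated in the preliminaries here, and as it follows from Kim's lemma alone) runs the other way: from $a \ind^{K}_{M} b$ and a Morley sequence $(b_{i})$ starting with $b$, one obtains a copy indiscernible over $Ma$. To obtain a Morley sequence in $a$ that is indiscernible over $Mb$, you would need $b \ind^{K}_{M} a$, which is precisely the conclusion you are trying to establish.

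The actual proof of symmetry in \cite{KR17} does not pass through the independence theorem. It proceeds by building, from a hypothetical asymmetric pair, a system of trees (in the sense of their Section 5), extracting a \emph{tree Morley sequence}, and showing via Kim's lemma that tree Morley sequences witness Kim-dividing; this produces a direct contradiction. The key technical input beyond Kim's lemma is the witnessing property of tree Morley sequences (their Theorem 5.13), not any form of amalgamation.

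Your reverse direction is in the right spirit but underspecified: the phrase ``over an appropriate base'' hides the real work. One must produce a model $M$ and an explicit failure of symmetry over $M$; the standard argument in \cite{CR15} passes through the array characterization (Definition \ref{4-consop1}(2) here) to manufacture two $M$-invariant types whose Morley sequences behave differently with respect to $\varphi$, which then yields the asymmetric pair.
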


The \textit{independence theorem} for Kim-independence in $\mathrm{NSOP}_{1}$ theories generalizes that of \cite{KP99} for simple theories, which in turn generalizes stationarity of forking-independence (the uniqueness of nonforking-extensions) in stable theories. Part of our argument for the results of Section 4 will require re-proving the independence theorem in the context of co-$\mathrm{NSOP}_{1}$ types. For motivation, we give the original statement:

\begin{fact}\label{4-independence}(\emph{Independence theorem, \cite{KR17}.})
    Let $T$ be $\mathrm{NSOP}_{1}$. Then if   $a_{1} \ind^{K}_{M} b_{1}$, $a_{2} \ind^{K}_{M} b_{2}$, $b_{1} \ind^{K}_{M} b_{2}$, and $a_{1} \equiv_{M} a_{2}$, there is some $a \ind^{K}_{M} b_{1}b_{2}$ with $a \equiv_{Mb_{i}}a_{i}$ for $i=1, 2$.
\end{fact}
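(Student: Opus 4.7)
The plan is to combine Kim's lemma (Fact \ref{4-kimslemma}) with symmetry (Fact \ref{4-symmetry}) and a chain-condition argument to interleave invariant Morley sequences that simultaneously witness each of the three Kim-independence hypotheses. The overall strategy is: arrange an invariant Morley sequence $(b_1^i)_{i<\omega}$ over $M$ with $b_1^0=b_1$ that is indiscernible over $Mb_2$, and an invariant Morley sequence $(b_2^i)_{i<\omega}$ over $M$ with $b_2^0=b_2$; use Kim's lemma to realize $\mathrm{tp}(a_1/Mb_1^i)$ for every $i$ by a single element; then transfer via indiscernibility to the side of $b_2$ and use $a_1\equiv_M a_2$ to close the loop.

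To implement this, first extend $\mathrm{tp}(b_1/M)$ and $\mathrm{tp}(b_2/M)$ to global $M$-invariant types $q_1$ and $q_2$. Since $b_1 \ind^{K}_{M} b_2$, Fact \ref{4-symmetry} gives $b_2\ind^{K}_{M}b_1$, and a Morley sequence $(b_1^i)_{i<\omega}$ in $q_1$ over $Mb_2$ beginning with $b_1$ will be indiscernible over $Mb_2$; since $a_1\ind^{K}_{M} b_1$, Kim's lemma (Fact \ref{4-kimslemma}) forces $\bigcup_{i<\omega}\mathrm{tp}(a_1/Mb_1^i)$ to be consistent. Realize it by some $a^*$, which by an automorphism fixing $Mb_1$ and a standard extraction can be arranged so that $(b_1^i)_{i<\omega}$ is $Ma^*$-indiscernible. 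In particular $a^*\equiv_{Mb_1}a_1$ and $a^*$ is Kim-independent from the whole sequence $(b_1^i)_{i<\omega}$ over $M$, which (being $Mb_2$-indiscernible and starting with $b_1$) encodes how $b_1$ sits relative to $b_2$.

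Now use $a_1\equiv_M a_2$: apply an $M$-automorphism sending $a_1$ to $a_2$, producing a sequence $(c_1^i)_{i<\omega}$ with $a_2$ playing the role $a^*$ played for $(b_1^i)$. Using invariance of $q_1$ and the fact that $(b_1^i)$ is $Mb_2$-indiscernible, together with the hypothesis $a_2\ind^{K}_{M}b_2$ and Kim's lemma applied to an invariant Morley sequence in $q_2$ starting with $b_2$, one identifies $c_1^0$ with $b_1$ after a suitable automorphism fixing $Mb_2$, producing the desired $a$ with $a\equiv_{Mb_1}a_1$ and $a\equiv_{Mb_2}a_2$. The additional assertion $a\ind^{K}_{M}b_1b_2$ is then obtained by an iterated construction: inductively extend both Morley sequences so every formula $\varphi(x,b_1b_2)\in\mathrm{tp}(a/Mb_1b_2)$ has a consistent invariant Morley sequence of instances, which by Kim's lemma means no such $\varphi$ Kim-divides over $M$.

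The main obstacle is the combinatorial bookkeeping to make these two Morley sequences cooperate: one must choose $q_1, q_2$ and the order of extractions so that the $Mb_2$-indiscernibility of $(b_1^i)$, the $Ma^*$-indiscernibility, and the $a_1\equiv_M a_2$ transfer can all be invoked in the same final configuration. Every place where $\mathrm{NSOP}_1$ is truly used passes through Fact \ref{4-kimslemma} (invariance of the witness of Kim-dividing across \emph{all} invariant Morley sequences) and Fact \ref{4-symmetry}; without both, the side-switching argument that lets $a_2$ inherit $a_1$'s compatibility with the $(b_1^i)$-sequence breaks down.
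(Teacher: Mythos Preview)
The paper does not prove this statement; it is recorded as a Fact with a citation to Kaplan--Ramsey. However, the paper does reprove a relativized version (Claims \ref{4-wit4} and \ref{4-ind4}) whose structure mirrors the Kaplan--Ramsey argument, so that is the natural comparison point.

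Your proposal has a genuine gap at the crucial transition. After producing $a^*$ with $a^*\equiv_{Mb_1}a_1$ and $(b_1^i)$ indiscernible over $Ma^*b_2$, you apply an $M$-automorphism taking $a_1$ to $a_2$, obtaining a sequence $(c_1^i)$. You then assert that ``one identifies $c_1^0$ with $b_1$ after a suitable automorphism fixing $Mb_2$.'' But there is no reason such an automorphism exists: the image sequence $(c_1^i)$ has no controlled relationship to $b_2$, since the automorphism fixed only $M$. All you know is $c_1^0\equiv_M b_1$, not $c_1^0\equiv_{Mb_2} b_1$. This is exactly the step where the hypothesis $b_1\ind^K_M b_2$ must do real work, and your sketch never actually uses it beyond producing the initial $Mb_2$-indiscernible sequence.

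The Kaplan--Ramsey proof (and the paper's Claims \ref{4-wit4}--\ref{4-ind4}) proceeds in two genuinely distinct stages. First, a \emph{weak} independence theorem is proven under the stronger hypothesis $c\ind^u_M b$ (finite satisfiability, not merely Kim-independence); here left extension for $\ind^u$ lets one manufacture an invariant Morley sequence containing both $bc'$ and $b'c''$ with the right types, which is what replaces your unjustified automorphism. Second, the upgrade from $\ind^u$ to $\ind^K$ in the hypothesis on $b_1,b_2$ requires a separate argument (the ``zig-zag'' lemma in \cite{KR17}, or Conant's technique in Claim \ref{4-ind4}): one supposes amalgamation fails and builds an array witnessing $\mathrm{SOP}_1$. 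Your outline collapses these two stages into a single hand-wave and does not supply the mechanism for either.
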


\textbf{Conant-independence}

\textit{Conant-independence} was introduced in a modified form in \cite{NSOP2} to show that $\mathrm{NSOP}_{2}$ theories were $\mathrm{NSOP}_{1}$. The standard version was defined in \cite{GFA}, based on Conant's implicit use of the concept in \cite{Co15} to classify modular free amalgamation theories. It was proposed by the author of this paper as an extension of Kim-independence beyond $\mathrm{NSOP}_{1}$ theories.

\begin{definition}\label{4-conantindependence}
Let $M$ be a model and $\varphi(x, b)$ a formula. We say $\varphi(x, b)$ \emph{Conant-divides} over $M$ if for \emph{every} invariant Morley sequence $\{b_{i}\}_{i \in \omega}$ over $M$ starting with $b$, $\{\varphi(x, b_i)\}_{i \in \omega}$ is inconsistent. We say $\varphi(x, b)$ \emph{Conant-forks} over $M$ if and only if it implies a disjunction of formulas Conant-dividing over $M$. We say $a$ is \emph{Conant-independent} from $b$ over $M$, written $a \ind^{K^{*}}_{M}b$, if $\mathrm{tp}(a/Mb)$ does not contain any formulas Conant-forking over $M$.
\end{definition}

In \cite{GFA} it is shown that if Conant-independence is symmetric in a theory $T$, $T$ is $\mathrm{NSOP}_{4}$. In the same paper, Conant-independence is characterized for most of the known examples of $\mathrm{NSOP}_{4}$ theories, where it is shown to be symmetric. It is open whether Conant-independence is symmetric in all $\mathrm{NSOP}_{4}$ theories, or even all $\mathrm{NSOP}_{3}$ theories. It is also open whether theories with symmetric Conant-independence display the classification-theoretic behavior characteristic of theories with a good notion of free amalgamation, first studied in \cite{Co15} and later improved upon in \cite{GFA}: either $\mathrm{NSOP}_{1}$ or $\mathrm{SOP}_{3}$, and either simple or $\mathrm{TP}_{2}$.

\textbf{Classification theory}

In this paper, we will be interested in $\mathrm{NSOP}_{3}$ theories and how they differ from $\mathrm{NSOP}_{4}$ theories:

\begin{definition}
Let $n \geq 3$. A theory $T$ is $\mathrm{NSOP}_{n}$ (that is, does not have the \emph{n-strong order property}) if there is no definable relation $R(x_{1}, x_{2})$ with no $n$-cycles, but with tuples $\{a_{i}\}_{i \in \omega}$ with $\models R(a_{i}, a_{j})$ for $i <j$. Otherwise it has $\mathrm{SOP}_{n}$.
\end{definition}

We will need nothing about $\mathrm{NSOP}_{4}$, other than that the below counterexamples to our results on $\mathrm{NSOP}_{3}$ theories are $\mathrm{NSOP}_{4}$, because they are free amalgamation theories; see \cite{Co15}, Theorem 4.4. We will need the following syntactic fact about $\mathrm{NSOP}_{3}$, essentially proven by Shelah (\cite{She95}, Claim 2.19):

\begin{fact}\label{4-sop3fact}
    Suppose there is an array $\{a_{i}, b_{i}\}_{i < \omega}$ and formulas $\varphi(x, y)$, $\psi(x, z)$ with

    (1) For $m < n$, $\{\varphi(x, b_{i})\}_{i \leq m} \cup \{\psi(x, a_{i})\}_{m < i \leq n}$ is consistent.

    (2) For $ i < j$, $\{\varphi(x, a_{i}), \psi(x, b_{j})\}$ is inconsistent.

    Then $T$ has $\mathrm{SOP}_{3}$.
\end{fact}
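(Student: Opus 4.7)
The plan is to construct, directly from the given data, a definable binary relation $R$ witnessing $\mathrm{SOP}_{3}$: an infinite $R$-chain coming from condition (1), and the global absence of 3-cycles coming from condition (2).

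First, by Ramsey's theorem and compactness I would pass to an indiscernible version of the array $(a_{i}, b_{i})_{i<\omega}$ still satisfying (1) and (2), and strengthen (1) via compactness to the statement that $\{\varphi(x, b_{i})\}_{i\leq m} \cup \{\psi(x, a_{j})\}_{j>m}$ is consistent for every $m$. Setting $u_{i} := (a_{i}, b_{i})$, the natural candidate combines the positive content of (1) with the negative content of (2):
\[
R(u, v) \;:=\; \exists x\,\bigl[\varphi(x, b_{u}) \wedge \psi(x, a_{v})\bigr] \;\wedge\; \forall x\,\neg\bigl[\varphi(x, a_{u}) \wedge \psi(x, b_{v})\bigr],
\]
where $u = (a_{u}, b_{u})$ and $v = (a_{v}, b_{v})$. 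For $i<j$ in the indiscernible array, the existential holds by (1) and the universal by (2), so $(u_{i})_{i<\omega}$ is an infinite $R$-chain, giving the chain half of $\mathrm{SOP}_{3}$.

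The main obstacle is showing that $R$ is globally 3-cycle-free. Given a putative cycle $R(u^{(1)}, u^{(2)}), R(u^{(2)}, u^{(3)}), R(u^{(3)}, u^{(1)})$, my plan is to combine the three existential witnesses $x_{k, k+1}$ (each satisfying $\varphi(x_{k, k+1}, b^{(k)}) \wedge \psi(x_{k, k+1}, a^{(k+1)})$) with the three universal prohibitions to force a contradiction, exploiting the asymmetry between ``$\varphi$ on the $b$-side with $\psi$ on the $a$-side'' (consistent, from (1)) and ``$\varphi$ on the $a$-side with $\psi$ on the $b$-side'' (inconsistent, from (2)). The hope is that tracing the constraints around the cycle forces one of the witnesses to satisfy a forbidden $\varphi(\cdot, a^{(k)}) \wedge \psi(\cdot, b^{(l)})$ pattern excluded by a neighboring universal. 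If this minimal $R$ admits global cycles, the fallback is to thicken the tuple, replacing $u_{i}$ by $(a_{i}, b_{i}, a_{i+1}, b_{i+1})$ and augmenting $R$ with conjuncts from (1) applied to the overlapping windows, so as to strengthen antisymmetry.

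An alternative route is to reduce to an equivalent two-formula syntactic characterization of $\mathrm{SOP}_{3}$ -- formulas $\theta_{0}, \theta_{1}$ and an indiscernible sequence $(c_{i})$ with each $\{\theta_{\ell}(x, c_{i})\}_{i}$ consistent and $\{\theta_{0}(x, c_{j}), \theta_{1}(x, c_{i})\}$ inconsistent for $i<j$. Taking $c_{i} := u_{i}$, $\theta_{0}(x, c_{i}) := \varphi(x, b_{i})$, and $\theta_{1}(x, c_{i}) := \psi(x, a_{i})$, the individual consistencies fall out of (1); the pairwise inconsistency must be coaxed from (2) by an indiscernibility argument trading the ``swapped'' inconsistency on the $\varphi(\cdot, a) \wedge \psi(\cdot, b)$ side for an ``unswapped'' one on the $\varphi(\cdot, b) \wedge \psi(\cdot, a)$ side. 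In either approach the key difficulty is the same: to route the inconsistency of (2), governing the $\varphi(\cdot, a), \psi(\cdot, b)$ pair, through the parameters controlled by (1), governing the complementary $\varphi(\cdot, b), \psi(\cdot, a)$ pair, so that all four atom types cooperate in the final $\mathrm{SOP}_{3}$ witness.
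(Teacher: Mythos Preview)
The paper does not give a proof of this statement; it records it as a known fact due to Malliaris (Conclusion 6.15 of \cite{Mal10b}) and Conant (\cite{Co15}, Proposition 7.2 together with the proof of Theorem 7.17). So there is no in-paper argument to compare against, and I evaluate your proposal on its own.

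Your relation $R$ does not rule out $3$-cycles. The existential conjunct of $R(u,v)$ speaks only about the parameter pair $(b_u, a_v)$, and the universal conjunct only about the pair $(a_u, b_v)$; in a putative $3$-cycle the three existentials concern the pairs $(b^{(k)}, a^{(k+1 \bmod 3)})$ while the three universals concern the pairs $(a^{(k)}, b^{(k+1 \bmod 3)})$, and these two families of pairs are disjoint. Nothing forces a witness to one of the existentials to violate any of the universals. For a concrete check in an arbitrary theory, take $\varphi(x,y)=\psi(x,y)=(x=y)$: then $R(u,v)$ reads $b_u=a_v \wedge a_u\neq b_v$, and the triples $u^{(1)}=(1,2)$, $u^{(2)}=(2,3)$, $u^{(3)}=(3,1)$ (each written as $(a,b)$) form a $3$-cycle for $R$. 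So your first approach fails at exactly the point you flag as ``the main obstacle,'' and your thickened fallback does not repair it: enlarging the parameter window still leaves the $\varphi(\cdot,b)\wedge\psi(\cdot,a)$ clauses and the $\varphi(\cdot,a)\wedge\psi(\cdot,b)$ clauses decoupled.

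Your alternative, reducing to the two-formula criterion for $\mathrm{SOP}_3$, diagnoses the difficulty correctly but does not solve it either. With $\theta_0(x,c)=\varphi(x,b_c)$ and $\theta_1(x,c)=\psi(x,a_c)$, condition (1) supplies the split-consistency, but the criterion requires $\{\theta_1(x,c_i),\theta_0(x,c_j)\}=\{\psi(x,a_i),\varphi(x,b_j)\}$ to be inconsistent for $i<j$, whereas (2) gives inconsistency of $\{\varphi(x,a_i),\psi(x,b_j)\}$. The $a$/$b$ roles are genuinely swapped, and indiscernibility of the sequence $(a_i,b_i)_i$ only permutes indices $i$, never the two coordinates inside a single term; so the ``trade'' you hope for is not available. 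This swap is precisely why the paper cites not just Conant's Proposition 7.2 (the two-formula criterion itself) but also ``the proof of Theorem 7.17'': bridging the configuration (1)--(2) to the hypotheses of 7.2 is a separate, nontrivial step carried out there. You should consult the cited references for that missing ingredient rather than expecting it to fall out of the relation you wrote down.
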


Finally $\mathrm{NTP}_{2}$ and $\mathrm{NDCTP}_{1}$ will play a secondary role in this paper, but we will discuss some results on these classes that motivate our main results on $\mathrm{NSOP}_{3}$ theories.

\begin{definition}
A theory $T$ is $\mathrm{NTP}_{2}$ (that is, does not have the \emph{tree property of the second kind}) if there is no array $\{b_{ij}\}_{i, j \in \omega}$ and formula $\varphi(x, y)$ such that there is some fixed $k$ such that, for all $i$, $\{\varphi(x, b_{ij})\}_{j \in \omega}$ is $k$-inconsistent, but for any $\sigma \in \omega^{\omega}$, $\{\varphi(x, b_{i\sigma(i)})\}_{i \in \omega}$ is consistent.
\end{definition}

The class $\mathrm{NATP}$ was introduced in \cite{AK20} and further developed in \cite{AK21} as a generalization of $\mathrm{NTP}_{2}$; it has been proposed as one possible answer to a question of Kruckman \cite{Kr}, on what class can be viewed to generalize properties of $\mathrm{NIP}$ and $\mathrm{NSOP}_{1}$ theories the same way $\mathrm{NTP}_{2}$ theories generalize properties of $\mathrm{NIP}$ and simple theories. It is still open to what extent the analogy holds; for example, whether Kim-forking coincides with Kim-dividing in $\mathrm{NATP}$ theories, as forking coincides with dividing in $\mathrm{NTP}_{2}$ theories. However, for \textit{$\mathrm{NDCTP}_{2}$ theories}, introduced in \cite{NSOP2} and further developed in \cite{KL22} and \cite{Hanson23} (where $\mathrm{NDCTP}_{2}$ is referred to as the negation of the ``comb tree property", or $\mathrm{NCTP}$), the equivalence of Kim-forking and Kim-dividing was proven in \cite{KL22} after being proven for coheir Kim-dividing and coheir Kim-forking in \cite{NSOP2}.

\begin{definition}\label{4-descendingcomb}(Proposition 2.51, item IIIa, [102]). A list $\eta_{1}, \ldots, \eta_{n} \in \omega^{<\omega}$ is a \emph{descending comb} if and only if it is an antichain such that $\eta_{1} <_{\mathrm{lex}} \ldots <_{\mathrm{lex}} \eta_{n}$, and such that, for $1 \leq k < n$,  $\eta_{1} \wedge \ldots \wedge \eta_{k+1} \lhd \eta_{1} \wedge \ldots \wedge \eta_{k}.$
    
\end{definition}

\begin{definition}
The theory $T$ has \emph{$\mathrm{DCTP}_{2}$} if there exists $k < \omega$, a formula $\varphi(x, y)$ and tuples $\{b_{\eta}\}_{\eta \in 2^{<\omega}}$ such that $\{\varphi(x, b_{\sigma \upharpoonleft n})\}_{n \in \omega}$ is $k$-inconsistent for any $\sigma \in 2^{\omega}$, but for any descending comb $\eta_{1} \ldots, \eta_{l} \in 2^{< \omega}$, $\{\varphi(x, b_{\eta_{i}})\}_{i=1}^{l}$ is consistent. If $T$ does not have $\mathrm{DCTP}_{2}$, it has \emph{$\mathrm{NDCTP}_{2}$}.
\end{definition}

\section{Reflection principles for hypergraph sequences}

Simple types were defined in \cite{HKP00}; then co-simple and $\mathrm{NTP}_{2}$ types were defined in \cite{Che14}. We define \textit{co-$\mathrm{NSOP}_{1}$} types and give some equivalent definitions, similarly to Definition 6.7 of \cite{Che14}. (When clear from context, when $p(x)$ is an $n$-type we refer to $p(\mathbb{M}^{n})$ by $p(\mathbb{M})$).

\begin{definition}\label{4-consop1}
A partial type $p(x)$ over $M$ is \textit{co-$\mathrm{NSOP}_{1}$} if it satisfies one of the following equivalent conditions:

(1) There does not exist a formula $\varphi(x, y) \in L(M)$ and tuples $\{b_{\eta}\}_{\eta \in 2^{<\omega}}$, $b_{\eta} \subset p(\mathbb{M})$ such that $\{\varphi(x, b_{\sigma \upharpoonleft n})\}_{n \in \omega}$ is consistent for any $\sigma \in 2^{\omega}$, but for any $\eta_{2} \unrhd \eta_{1} \smallfrown \langle 0\rangle$, $\{\varphi(x, b_{\eta_{2}}), \varphi(x, b_{\eta_{1} \smallfrown \langle 1\rangle})\}$ is inconsistent.

(2, 2') There does not exist a formula $\varphi(x, y) \in L(M)$ and an array $\{c_{i, j}\}_{i = 0, 1, j< \omega}$, $c_{i, j} \subset p(\mathbb{M})$, such that $\{\varphi(x, c_{0, j})\}_{j < \omega}$ is consistent, $\{\varphi(x, c_{1, j})\}_{j < \omega}$ is $k$-inconsistent for some $k$ ($2$-inconsistent), and $c_{0, j} \equiv_{Mc_{0, < j}c_{1, < j}} c_{1, j}$ for each $j < \omega$.

(3) Kim's lemma for Kim-dividing: For $M' \succeq M$, and $\varphi(x, y) \in L(M)$, if $\varphi(x, b)$ Kim-divides over $M$ for $b \subset p(\mathbb{M})$, then for every $M'$-invariant Morley sequence $\{b_{i}\}_{i \in \omega}$ with $b_{0}=b$, $\{\varphi(x, b_{i})\}_{i \in \omega}$ is inconsistent.

\end{definition}

\begin{proof}
     This is essentially proven in Chernikov and Ramsey (\cite{CR15}) and Kaplan and Ramsey (\cite{KR17}) so we will only give a sketch.

    (1 $\Leftrightarrow$ 2 $\Leftrightarrow$ 2') Follows from the proof of Proposition 2.4 of \cite{CR15}  uses the proof of Proposition 5.6 of \cite{CR15}. The part due to \cite{CR15} shows that if there is a formula $\varphi(x, y) \in L(M)$ and tuples $\{b_{\eta}\}_{\eta \in 2^{<\omega}}$, $b_{\eta} \subset p(\mathbb{M})$ such that $\{\varphi(x, b_{\sigma \upharpoonleft n})\}_{n \in \omega}$ is consistent for any $\sigma \in 2^{\omega}$, but for any $\eta_{2} \unrhd \eta_{1} \smallfrown \langle 0\rangle$, $\{\varphi(x, b_{\eta_{2}}), \varphi(x, b_{\eta_{1} \smallfrown \langle 1\rangle})\}$ is inconsistent, then there is an array $\{c_{i, j}\}_{i = 0, 1, j< \omega}$, such that $\{\varphi(x, c_{0, j})\}_{j < \omega}$ is consistent, $\{\varphi(x, c_{1, j})\}_{j < \omega}$ is $2$-inconsistent, and $c_{0, j} \equiv_{Mc_{0, < j}c_{1, < j}} c_{1, j}$ for each $j < \omega$. The other direction due to \cite{KR17} shows that if there is $\varphi(x, y)$ and $\{c_{i, j}\}_{i = 0, 1, j< \omega}$, such that $\{\varphi(x, c_{0, j})\}_{j < \omega}$ is consistent, $\{\varphi(x, c_{1, j})\}_{j < \omega}$ is $k$-inconsistent for some $k$, and $c_{0, j} \equiv_{Mc_{0, < j}c_{1, < j}} c_{1, j}$ for each $j < \omega$, then there is a formula $\psi(x, y') \in L(M)$ (obtained as a conjunction of instances of $\varphi$) and tuples $\{b_{\eta}\}_{\eta \in 2^{<\omega}}$, $b_{\eta} \subset p(\mathbb{M})$ such that $\{\psi(x, b_{\sigma \upharpoonleft n})\}_{n \in \omega}$ is consistent for any $\sigma \in 2^{\omega}$, but for any $\eta_{2} \unrhd \eta_{1} \smallfrown \langle 0\rangle$, $\{\psi(x, b_{\eta_{2}}), \psi(x, b_{\eta_{1} \smallfrown \langle 1\rangle})\}$ is inconsistent. And if each $b_{\eta} \subseteq p(\mathbb{M})$ for $p(x)$ a fixed partial type, the former direction even shows that we can choose $c_{i, j} \subseteq p(\mathbb{M})$, and vice versa for the latter direction, proving the equivalence in the co-$\mathrm{NSOP}_{1}$ case.

    (3 $\Rightarrow$ 2). This is basically the proof of Proposition 3.14 of \cite{KR17}. Assume (2) is false; we show (3) is false. The equivalence (1 $\Leftrightarrow$ 2) does not use anything about the fact that $M$ is a model, and the failure of (1) to hold is preserved under expanding the language; therefore, we can fix a Skolemization $T^{\mathrm{Sk}}$ of $T$, and assume that $c_{0, j} \equiv^{L^{\mathrm{Sk}}}_{Mc_{0, < j}c_{1, < j}} c_{1, j}$ for each $j < \omega$. By Ramsey's theorem and compactness, we can choose $\{\bar{c}_{j}\}_{j < \omega}$ $M$-indiscernible in $T^{\mathrm{Sk}}$. Let $M' = \mathrm{dcl}_{\mathrm{Sk}}(\bar{c}_{<\omega}M) $. Choose non-principal ultrafilters $U_{i}$, $i = 0, 1$, containing $c_{i,<\omega}$, and for $i = 0, 1$ define the global type $p_{i}(x)$ to be equal to $\{\varphi(x, c): \varphi(M, c) \in U_{i}\}$, so that each of the $p_{i}$ are finitely satisfiable over $M$. It can be shown from $c_{0, j} \equiv^{L^{\mathrm{Sk}}}_{Mc_{0, < j}c_{1, < j}} c_{1, j}$ that $p_{0}|_{M} = p_{1}|_{M}$; let $b$ realize this, so $b \subseteq p(\mathbb{M})$. Then $\{\varphi(x, b_{i})\}_{i <\omega}$ will be consistent for $\{b_{i}\}_{i < \omega}$ a Morley sequence in $p_{0}$, but $\{\varphi(x, b'_{i})\}_{i <\omega}$ will be inconsistent for $\{b'_{i}\}_{i < \omega}$ a Morley sequence in $p_{1}$, so Kim's lemma fails.

    (2 $\Rightarrow$ 3). This is the proof of Proposition 3.15 of \cite{KR17}. We assume that (3) is false and show that (2) is false. Let $\varphi(x, b)$ for $b \subseteq p(\mathbb{M})$ Kim-divide over $M'$, winessed by a Morley sequence in the $M'$-invariant type $p_{1}$. Let Morley sequences in the $M'$-invariant type $p_{0}$ fail to witness Kim-dividing of $\varphi(x, b)$ over $M'$. Find $\{c_{0, i}, c_{1, i}\}_{i \in \mathbb{Z}}$ so that $(c_{0, i}, c_{1, i})_{i \in \mathbb{Z}} \models (p_{0} \otimes p_{1})^{ (\mathbb{Z})}$. Then $\{c_{0, i}, c_{1, i}\}_{0\leq i < \omega }$ will be as desired.

\end{proof}

While Chernikov (\cite{Che14}, Definition 6.7) gives an additional characterization of co-simplicity in terms of symmetry for forking-independence, it requires additional elements of the base to belong to $p(\mathbb{M})$. Giving a characterization of co-$\mathrm{NSOP}_{1}$ types in terms of symmetry for Kim-independence would be more complicated, because defining Kim-independence over arbitrary sets, rather than models, requires additional considerations; see \cite{KR17}. However, co-$\mathrm{NSOP}_{1}$ types over $M$ do have a symmetry property over $M$, which will be useful in the sequel:

\begin{proposition}\label{4-symm3}
(Symmetry) Let $p(x)$ be a co-$\mathrm{NSOP}_{1}$ type over $M$ and $a \subset p(\mathbb{M})$. If $a \ind^{Kd}_{M'} b$, then $b\ind_{M}^{Kd} a$.

\end{proposition}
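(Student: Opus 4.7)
The plan is to adapt the proof of symmetry of Kim-independence in $\mathrm{NSOP}_{1}$ theories (Fact \ref{4-symmetry}, from \cite{CR15, KR17}) to the present relative setting, exploiting the fact that $a \subset p(\mathbb{M})$ gives access to Kim's lemma for formulas $\varphi(y,a)$ via Definition \ref{4-consop1}(3).

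Suppose for contradiction that $a \ind^{Kd}_{M} b$ but $b \nind^{Kd}_{M} a$, and pick $\varphi(y,a) \in \mathrm{tp}(b/Ma)$ Kim-dividing over $M$. Since $a \subset p(\mathbb{M})$, by Definition \ref{4-consop1}(3) every $M$-invariant Morley sequence $(a_{i})_{i<\omega}$ with $a_{0}=a$ has $\{\varphi(y,a_{i})\}_{i<\omega}$ $k$-inconsistent for some $k$. It thus suffices to build such a sequence with $a_{i}\equiv_{Mb}a$ for every $i$; then $b\models \varphi(y,a_{i})$ for all $i$, a contradiction.

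To produce the sequence, I would first take a Morley sequence $(a'_{i})$ in a global $M$-finitely satisfiable extension of $\mathrm{tp}(a/M)$ with $a'_{0}=a$, then extract, via Ramsey and compactness, an $Mb$-indiscernible subsequence $(a_{i})$ that remains an $M$-invariant Morley sequence in some $M$-finitely satisfiable global type. By $Mb$-indiscernibility all $a_{i}$ share a common type over $Mb$, so the task reduces to arranging $a_{0}\equiv_{Mb}a$. I would argue that this alignment is always possible under $a\ind^{Kd}_{M}b$: if it were not, the obstruction would take the form of a formula $\psi(x,b) \in \mathrm{tp}(a/Mb)$ failing uniformly on every such extraction, and combining this uniform failure of $\psi$ with the $2$-inconsistency of $\varphi$ along $(a_{i})$ (available via the equivalence (2')$\Leftrightarrow$(3) in Definition \ref{4-consop1}) would yield an array $\{c_{0,j},c_{1,j}\}_{j<\omega}\subset p(\mathbb{M})$ of the shape in Definition \ref{4-consop1}(2'), contradicting co-$\mathrm{NSOP}_{1}$ of $p$.

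The main obstacle is exactly this last extraction-to-array step: turning the uniform failure of alignment into an array of parameters from $p(\mathbb{M})$ witnessing the negation of co-$\mathrm{NSOP}_{1}$. This plays the role, in the $p$-localized setting, of the extension property for Kim-nonforking and the independence theorem in the original proofs of symmetry of $\ind^{K}$ in \cite{CR15, KR17}, and is the only place where co-$\mathrm{NSOP}_{1}$ of $p$ (rather than just Kim's lemma for parameters in $p(\mathbb{M})$) is genuinely used.
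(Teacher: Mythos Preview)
Your proposal has a genuine gap at the extraction step, and the sketch you give for the alignment step does not correspond to an argument that can be carried out.

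First, the extraction. You write that after taking a coheir Morley sequence $(a'_{i})$ with $a'_{0}=a$, you would ``extract, via Ramsey and compactness, an $Mb$-indiscernible subsequence $(a_{i})$ that remains an $M$-invariant Morley sequence in some $M$-finitely satisfiable global type.'' This is exactly the step that fails: the property of being a Morley sequence in a single $M$-invariant type is \emph{not} type-definable, so Ramsey and compactness do not preserve it. (The paper itself makes this point explicitly in the paragraph preceding Lemma~\ref{4-symm5}.) What survives extraction is only that each $a_{i}$ satisfies $a_{i}\ind^{u}_{M}a_{<i}$, not that all the $a_{i}$ realize a common global $M$-invariant type over their predecessors. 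Kim's lemma from Definition~\ref{4-consop1}(3), however, applies to genuine invariant Morley sequences, so you cannot conclude $k$-inconsistency of $\{\varphi(y,a_{i})\}$ along your extracted sequence.

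Second, the alignment step ($a_{0}\equiv_{Mb}a$) is where all the content lies, and your description of it is not an argument. You say that failure of alignment would produce ``a formula $\psi(x,b)$ failing uniformly on every such extraction,'' and that combining this with $2$-inconsistency of $\varphi$ would yield an array as in Definition~\ref{4-consop1}(2'). But there is no mechanism here: you have not explained how to interleave realizations to get $c_{0,j}\equiv_{Mc_{0,<j}c_{1,<j}}c_{1,j}$, nor why the hypothesis $a\ind^{Kd}_{M}b$ enters. This is not a detail to be filled in; it is the entire proof.

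The paper's argument is structurally different. It does not try to build a single invariant Morley sequence of $a$'s over $Mb$. Instead it introduces the auxiliary relation $a\ind^{K^{+}u}_{M}b$ (existence of a coheir Morley sequence in $b$ indiscernible over $Ma$), proves right extension for it, and uses this to build by induction trees $(I_{n},J_{n})$ in which paths are consistent (realized by copies of $b$) and, crucially, descending combs form sequences with $a_{\eta_{i}}\ind^{u}_{M}a_{\eta_{1}}\ldots a_{\eta_{i-1}}$. A separate bound (Claim~\ref{4-bound}), proved directly from co-$\mathrm{NSOP}_{1}$ via the $(p_{i}\otimes q)$-interleaving trick of \cite{KR17}, forces such combs to be $k$-inconsistent. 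One then extracts a strongly indiscernible tree and reads off the array of Definition~\ref{4-consop1}(2). The hypothesis $a\ind^{Kd}_{M}b$ is used only to start the induction, via the easy implication $\ind^{Kd}\Rightarrow\ind^{K^{+}u}$; the tree construction and Claim~\ref{4-bound} are what replace your missing alignment and array steps.
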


\begin{proof}
    Because $b$ is not necessarily contained in $ p(\mathbb{M})$ construction of the original tree must proceed like the proof of Theorem 5.4 of \cite{NSOP2} (based in turn on the proof of Lemma \ref{4-consistency5b} of \cite{KR17}) in taking a specially chosen Morley sequence at each stage, rather than directly following the proof of Theorem 6.5 of \cite{KR17}. Though the rest of the proof can be done as in Lemma \ref{4-wit5} and Proposition 5.13 of \cite{KR17}, we give our own exposition, which only requires us to construct a tree of countable size rather than a much larger tree.

    Suppose $p(\mathbb{M})$ is co-$\mathrm{NSOP}_{1}$. We begin with the following claim (which we could have avoided by following Lemma \ref{4-wit5} and Proposition 5.13 of \cite{KR17}):

    \begin{claim}\label{4-bound}
        Let $\varphi(x, c)$ Kim-divide over $M$ for $\varphi(x, y) \in L(M)$ and $c \subseteq p(\mathbb{M})$. Then there is a bound depending only on $\varphi(x, y)$ and $\mathrm{tp}(c/M)$ on the size of a set $\{c_{i}\}^{n}_{i= 0}$, $c_{i} \models \mathrm{tp}(c/M)$ for $0 \leq i \leq n$, such that there are $M$-finitely satisfiable types $p_{0} \ldots p_{n}$ such that $c_{i} \models p_{i}(x)|_{Mc_{0} \ldots c_{i-1}}$ for $0 \leq i \leq n$, and $\{\varphi(x, c_{i})\}^{n}_{i = 0}$ is consistent.
    \end{claim}

    \begin{proof}
        This proceeds as in the direction (2 $\Rightarrow$ 3) of the previous definition (again, see \cite{KR17}, Proposition 3.15). Let $p_{0}, \ldots, p_{n}$ be as in the claim, and let Morley sequences in the $M$-invariant type $q(x) \vdash \mathrm{tp}(c/M)$ witness Kim-dividing of $\varphi(x, c)$ over $M$: for $\bar{c}' \models q^{(\omega)}|_{M}(x)$, $\{\varphi(x, c'_{i})\}_{i < \omega}$ is $k$-inconsistent for some fixed $k$. Find $\{c_{0, i}, c_{1, i}\}_{0 \leq i \leq n}$ such that $(c_{0, i}, c_{1, i})_{0 \leq i \leq n} \models (p_{n} \otimes q)\otimes \ldots \otimes (p_{0} \otimes q)$. Then $c_{0, n} \ldots c_{0, 0} \equiv_{M} c_{0}, \ldots c_{n} $, so $\{\varphi(x, c_{0, i})\}^{n}_{i = 0}$ is consistent. However, $c_{1, n}, \ldots, c_{1, 0} \models q^{(n +1)}|_{M}(x)$, so  $\{\varphi(x, c_{1, i})\}^{n}_{i = 0}$ is $k$-inconsistent. Finally, $c_{0, 0}c_{1,0} \ldots c_{0, i-1} c_{1, i-1} \ind^{i}_{M} c_{0, i}c_{1, i}$ for all $1 \leq i \leq n$, and $c_{0,i} \equiv_{M} c \equiv_{M} c_{1, i}$, so $c_{0,i}\equiv_{M_{c_{0, 0}c_{1,0} \ldots c_{0, i-1} c_{1, i-1}}} c_{1, i}$ for all $1 \leq i \leq n$.

        Now if $n$ is unbounded ($k$ is fixed) this contradicts Definition \ref{4-consop1} (2), by compactness.
    \end{proof}

     The following step, where we construct a tree, is where we must deviate from the proof of Theorem 5.16 of \cite{KR17}. We use the notation $a\ind^{K^{+}u}_{M} b$ to denote that there is a coheir Morley sequence $\{b_{i}\}_{i < \omega}$ over $M$ with $b_{0} = b$ that remains indiscernible over $Ma$. We prove some basic facts about this relation:

    \begin{claim}\label{4-rext}
        Right extension: The relation $\ind^{K^{+}u}$ satisfies right extension: if $a \ind^{K^{+}u}_{M} b$, for any $c$ there is some $a' \equiv_{Mb} a$ with  $a' \ind^{K^{+}u}_{M} bc$.
    \end{claim}
    \begin{proof}
        Let $I=\{b_{i}\}_{i < \omega}$ be a Morley sequence in the $M$-finitely satisfiable type $q(x)$, $b_{0} = b$, that remains indiscernible over $Ma$. By left extension for $\ind^{u}$ there is some $M$-finitely satisfiable type $r(x, y)$ extending $q(x)$ and $\mathrm{tp}(bc/M)$. Then there are $c_{i}'$, $i < \omega$ $c_{0} = c$, such that $\{b_{i}c_{i}\}_{i<\omega}$ is a Morley sequence in $r(x, y)$. By Ramsey's theorem, compactness and an automorphism, $a'$ can then be chosen such that $a' \equiv_{Mb} a$, indeed such that $a'\equiv_{MI} a$, and $\{b_{i}c_{i}\}_{i<\omega}$ is indiscernible over $Ma$.
    \end{proof}

    \begin{claim}\label{4-cc3}
        Chain condition: Let $I=\{b_{i}\}_{i \in \omega}$ be an $M$-finitely satisfiable Morley sequence indiscernible over $Ma$. Then $a \ind^{K^{+}u}_{M} I$.
    \end{claim}

\begin{proof}
    By compactness there is $I'=\{b_{i}\}_{i \in \omega^{2}}$ such that $I'|_{\omega} = I$ and $I'$ is indiscernible over $Ma$. Then $\{b'_{i \omega} \ldots b'_{i \omega+j} \ldots \}_{i < \omega}$ will be an $M$-finitely satisfiable Morley sequence starting with $I'$ and indiscernible over $Ma$.
\end{proof}

Assume for contradiction that $a \ind^{Kd}_{M} b$ with $b \subseteq p(\mathbb{M})$, but $b \nind^{Kd}_{M} a$. We find, for all $n$, a tree $(I_{n}, J_{n})= (\{a_{\eta}\}_{\eta \in \omega^{\leq n}}, \{b_{\sigma}\}_{\sigma \in \omega^{n}})$, with the first $n+1$ levels $I_{n}$ forming an infinitely branching tree, then with each $a_{\sigma}$ for $\sigma \in \omega^{n}$ at level $n+1$ followed by a single additional leaf $b_{\sigma}$ at level $n+2$, with the following two properties:

(1) For $\eta \unlhd \sigma$, $|\sigma| =n$, $a_{\eta}b_{\sigma} \equiv_{M} ab$.

(2) For $\eta \in \omega^{< n}$, the subtrees at $\eta$ (i.e. the sets of elements indexed by the cones strictly above $\eta$, ordered as $\{a_{\unrhd \eta \smallfrown \langle n \rangle}\}_{n \in \omega}$) form an $M$-finitely satisfiable Morley sequence indiscernible over $a_{\eta}$ (so for $I$ this sequence of subtrees, $a_{\eta} \ind^{K^{+}u}_{M} I$).

For $n =0$, let $a_{\emptyset}=a$, $b_{\emptyset} =b$; then (2) follows from the fact that $\ind^{Kd}$ easily implies $\ind^{K^{+}u}$. Assume $(I_{n}, J_{n})$ already constructed; we construct $(I_{n+1}, J_{n+1})$. We see by (2) that for $(I^{*}_{n}, J_{n})$ the nodes of the tree excluding $a_{\emptyset}$, $a_{\emptyset}\ind^{K^{+}u}_{M} I^{*}_{n} J_{n}$. By Claim \ref{4-rext}, find $a'_{\emptyset}\equiv_{MI^{*}_{n}J_{n}}a_{\emptyset}$ with $a'_{\emptyset} \ind^{K^{+}u}_{M} I_{n} J_{n}$, which will be the new root of $(I_{n+1}, J_{n+1})$. Then find some $M$-finitely satisfiable Morley sequence $\{(I_{n},J_{n})^{i}\}_{i \in \omega}$ starting with $(I_{n},J_{n})$ indiscernible over $Ma'_{\emptyset}$, giving the subtrees of $(I_{n}, J_{n})$ at $a'_{\emptyset}$. From $a_{\emptyset}J_{n} \equiv_{M} a'_{\emptyset}J_{n} \equiv_{M} a'_{\emptyset}J^{i}_{n}$, we will preserve (1) by indexing accordingly, and from choice of $\{(I_{n},J_{n})^{i}\}_{i \in \omega}$, we will preserve (2) as well.

We now find a contradiction to Definition 3.1.2; this is where, by constructing a much larger tree, we could have just followed Lemma \ref{4-wit5} and Proposition 5.13 of \cite{KR17}. By (1), the paths of each $I_{n}$ are consistent: for $\sigma \in \omega^{n}$, $\{\varphi(x, a_{\eta})\}_{\eta \unlhd \sigma}$ is consistent, realized by $b_{\sigma}$. But by (2), for any $k$ nodes $\eta_{1}, \ldots, \eta_{k} \in \omega^{<\omega}$, forming an antichain such that $\eta_{1} <_{\mathrm{lex}} \ldots <_{\mathrm{lex}} \eta_{k}$, and such that, for $1 \leq i < k$, $\eta_{1}\wedge \ldots \wedge \eta_{i+1} \lhd \eta_{1} \wedge \ldots \wedge \eta_{i}$, $\{a_{\eta_{i}}\}_{i = 1}^{k}$ form a sequence with $a_{\eta_{i}} \ind^{i}_{M} a_{\eta_{1}} \ldots a_{\eta_{i-1}}$; by (1), $a_{i} \subseteq p(\mathbb{M})$. So for $k$ the bound from Claim \ref{4-bound}, and $\eta_{i}$ with these conditions (forming a \textit{descending comb}, Definition \ref{4-descendingcomb}), for $\eta_{i}$ for $1 \leq i \leq k$ satisfying the above property, $\{\varphi(x, a_{\eta_{i}})\}_{i = 1}^{k}$ is inconsistent. So by compactness, we can find a tree $\{a_{\eta}\}_{\eta \in \omega^{< \omega}}$ with the same consistency and inconsistency properties for $\varphi(x, y)$ (consistency along the paths and inconsistency on descending combs of size $k$), and with $b_{\eta} \subseteq p(\mathbb{M})$.

We recall the following definition and fact:

\begin{definition}
   (Definitions 11 and 12, \cite{TT12}) For tuples $\overline{\eta}, \overline{\eta}' \in \omega^{<\omega}$ of elements of $\omega^{<\omega}$, we write $\overline{\eta} \sim_{0} \overline{\eta}'$ to mean that $\overline{\eta}$ has the same quantifier-free type in the language $\{<_{\mathrm{lex}}, \lhd, \wedge \}$ as $\overline{\eta}'$. For $(b_{\eta})_{\eta \in \omega^{<\omega}}$ a tree-indexed set of tuples, all of the same length, and $\overline{\eta} = \eta_{1}, \ldots, \eta_{n} \in \omega^{<\omega}$ an $n$-tuple of elements of $\omega^{<\omega}$, we write $b_{\overline{\eta}}=: b_{\eta_{1}}\ldots b_{\eta_{n}}$, and call $(b_{\eta})_{\eta \in \omega^{<\omega}}$ \emph{strongly indiscernible} over a set $A$ if for all tuples  $\overline{\eta}, \overline{\eta}' \in \omega^{<\omega}$ of elements of $\omega^{<\omega}$ with $\overline{\eta} \sim_{0} \overline{\eta}'$, $b_{\overline{\eta}} \equiv_{A} b_{\overline{\eta}'}$.
\end{definition}

\begin{fact} \label{4-indisc3} (Theorem 16, \cite{TT12}; see \cite{Sc15} for an alternate proof) Let $(b_{\eta})_{\eta \in \omega^{<\omega}}$ be a tree-indexed set of tuples, all of the same length, and $A$ a set. Then there is $(c_{\eta})_{\eta \in \omega^{<\omega}}$ strongly indiscernible over $A$ such that for any tuple $\overline{\eta} \in \omega^{< \omega}$ of elements of $\omega^{<\omega}$ and $\varphi(x) \in L(A)$, if $\models \varphi(b_{\overline{\eta}'})$ for all $\overline{\eta}' \sim_{0} \eta$, then $\models \varphi(c_{\overline{\eta}})$.
\end{fact}

Now use Fact \ref{4-indisc3} to extract a strongly indiscernible tree $(c_{\eta})_{\eta \in \omega^{<\omega}}$. Let $\{c_{j, i}\}_{j = 0, 1, i < \omega} =\{c_{\langle 0\rangle^{i} \smallfrown \langle j \rangle}\}$. Then $\{c_{j, i}\}_{j = 0, 1, i < \omega}$ is as in Definition 3.1.2, contradiction.

\end{proof}

We could likely have also proven Proposition \ref{4-symm3} in the style of Definition 6.1 of \cite{Che14}: use right extension to find an $\ind^{Kd}$-Morley sequence of $a$ over $M$, indiscernible over $b$, and then developed local character and Kim's lemma for $\ind^{Kd}$-Morley sequences in the context of co-$\mathrm{NSOP}_{1}$ types, following \cite{KRS19} and \cite{KR19}. Since these characterizations of co-$\mathrm{NSOP}_{1}$ are not necessary for our main theorem on internally $\mathrm{NSOP}_{1}$ types, we leave the details to the reader.

Notions such as co-simple and co-$\mathrm{NSOP}_{1}$ types involve interaction of the types with the rest of the structure. In the other direction, there are the simple types defined in \cite{HKP00}, the $\mathrm{NIP}$ and $\mathrm{NTP}_{2}$ types defined in \cite{Che14}, and the fully stable types defined in \cite{sim12}. We introduce a new schema for defining the local classification-theoretic properties of a type, which is in some sense more natural, because it depends only on the corresponding properties for a structure associated with the type.

\begin{definition}\label{4-internally}
    (1) Let $p(x)$ be a partial $n$-type over $M$. Let $\mathcal{L}_{p}$ contain an $m$-ary relation symbol $R_{\varphi}$ for each formula $\varphi(x_{1}, \ldots, x_{m}) \in L(M)$ with $|x_{i}| = n$ for $i \leq n$.\footnote{Note that this is dependent on the model $M$ over which $p(x)$ is viewed as a partial type, so in this definition $p(x)$ is taken not just as a set of formulas, but as a set of formulas together with a model $M$ containing the parameters of those formulas.} Then $\mathcal{M}_{p}$ is the $\mathcal{L}_{p}$-structure with domain $p(\mathbb{M}^{n})$ and with $R_{\varphi}(p(\mathbb{M}^{n})^{m}) = \varphi(\mathbb{M}^{mn}) \cap p(\mathbb{M}^{n})^{m}$.

    (2) Let $\mathcal{P}$ be a property of theories. Then a partial type $p(x)$ is \emph{internally $\mathcal{P}$} if the theory of $\mathcal{M}_{p}$ is $\mathcal{P}$.
\end{definition}

\begin{remark}\label{4-qe}
    If $p(x)$ is not just a partial type, but a formula with parameters in $M$, then the theory of $\mathcal{M}_{p}$ clearly has quantifier elimination. In this case, for $p$ to be internally simple, $\mathrm{NIP}$, etc. is weaker than for it to be simple or $\mathrm{NIP}$ in the sense of \cite{HKP00}, \cite{Che14}. The case of partial types given by single formulas is in fact all we need to find counterexamples in $\mathrm{NSOP}_{4}$ theories to our results on the internally $\mathrm{NSOP}_{1}$ types of $\mathrm{NSOP}_{3}$ theories. In the case of general partial types, we could have have also considered the case where $\mathcal{P}$ is a property of formulas and all quantifier-free formulas of $\mathcal{L}_{p}$ have property $\mathcal{P}$. This definition would also be weaker than the corresponding ``external" property, and our results should go through even assuming only the quantifier-free version, by developing the theory of Kim-independence relative to only the quantifier-free formulas.
\end{remark}

Theorem 6.17 of \cite{Che14} says that simple types are co-simple; in fact, only internal simplicity is needed. By way of analogy, internally $\mathrm{NSOP}_{1}$ types are co-$\mathrm{NSOP}_{1}$ in $\mathrm{NDCTP}_{2}$ theories; see below. Beyond this analogy, we find that:

\begin{theorem}\label{3-main3}
    Let $T$ be $\mathrm{NSOP}_{3}$, and $p(x)$ an internally $\mathrm{NSOP}_{1}$ type. Then $p(x)$ is co-$\mathrm{NSOP}_{1}$.
\end{theorem}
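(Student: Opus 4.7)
The plan is to argue by contrapositive: assume $p(x)$ is internally $\mathrm{NSOP}_{1}$ and not co-$\mathrm{NSOP}_{1}$, and deduce that $T$ has $\mathrm{SOP}_{3}$ via Fact \ref{4-sop3fact}. First, using clause (2') of Definition \ref{4-consop1}, I would extract a formula $\varphi(x,y) \in L(M)$ together with an array $\{c_{i,j}\}_{i=0,1,\,j<\omega}$ of tuples in $p(\mathbb{M})$ such that $\{\varphi(x,c_{0,j})\}_{j<\omega}$ is consistent, $\{\varphi(x,c_{1,j})\}_{j<\omega}$ is $2$-inconsistent, and $c_{0,j}\equiv_{M c_{<j}} c_{1,j}$. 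As in the proof of (2 $\Rightarrow$ 3) in Definition \ref{4-consop1}, I would pass to a Skolemization and use Ramsey to replace the array by one whose rows are $M$-indiscernible with the same EM-type, arising from $M$-finitely satisfiable types $q_{0},q_{1}$ with $q_{0}|_{M}=q_{1}|_{M}$. Fixing $b \subset p(\mathbb{M})$ realizing this common restriction, the formula $\varphi(x,b)$ Kim-divides over $M$ witnessed by a Morley sequence of $q_{1}$, but a Morley sequence of $q_{0}$ starting with $b$ yields a consistent path.

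Next, I would exploit the internal $\mathrm{NSOP}_{1}$ of $p$. Since $q_{0}$ and $q_{1}$ are finitely satisfiable in $M$, their restrictions to the internal structure $\mathcal{M}_{p}$ are still $M$-invariant, so internal Morley sequences are precisely external ones, and by Facts \ref{4-kimslemma}, \ref{4-symmetry} and \ref{4-independence} applied internally, Kim's lemma for internal Kim-dividing, symmetry of internal Kim-independence, and the internal independence theorem all hold on $\mathcal{M}_{p}$. The plan is to iterate the internal independence theorem to build, for each $n<\omega$, an array $\{a_{i},b_{i}\}_{i<n}$ of tuples in $p(\mathbb{M})$ where the $a_{i}$'s are a Morley extension of the consistent row $(c_{0,j})$ in the internal sense, and each $b_{i}$ realizes $\mathrm{tp}(c_{1,i}/Ma_{<\omega} b_{<i})$, amalgamated so that $b_{i}$ is internally Kim-independent of $a_{\le i}$ over $M$. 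Using the defining identity $c_{0,j}\equiv_{Mc_{<j}} c_{1,j}$ and this internal amalgamation, I would check that:
\begin{enumerate}
\item[(1)] for every $m<n$, the mixed conjunction $\{\varphi(x,b_{i}):i\le m\}\cup\{\varphi(x,a_{i}):m<i\le n\}$ is consistent (because the $a_{i}$'s extend the consistent row, and the $b_{i}$'s for $i\le m$ agree with the corresponding $a_{i}$'s over the relevant parameters by the internal amalgamation);
\item[(2)] for every $i<j$, the pair $\{\varphi(x,a_{i}),\varphi(x,b_{j})\}$ is inconsistent (transporting the pairwise $2$-inconsistency of the $(c_{1,j})$ row across the coheir shift and the internal independence chosen in the amalgamation).
\end{enumerate}

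With such an array in hand, Fact \ref{4-sop3fact} produces an $\mathrm{SOP}_{3}$ witness for $T$, contradicting the hypothesis and completing the proof. The main obstacle I expect is Step (1)--(2): coordinating the internal $\mathrm{NSOP}_{1}$ amalgamation, which naturally controls internal formulas of $\mathcal{M}_{p}$, with the external formula $\varphi(x,y)$ whose first variable need not range over $p$. The bridge is precisely that $M$-invariant global types restrict to $M$-invariant types on $\mathcal{M}_{p}$, so Morley sequences and independence-theorem amalgams obtained internally are realized externally by actual tuples of $\mathbb{M}$, allowing the consistency and inconsistency conditions on $\varphi$ to be transported correctly. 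Verifying the inconsistency clause (2) looks especially delicate, as it requires the internal independence to propagate the pairwise $2$-inconsistency of the $(c_{1,j})$-row across the mixed pairs $(a_{i},b_{j})$ for $i<j$ while preserving the consistency of the initial segments in clause (1); this is where the full strength of the internal independence theorem, rather than just symmetry or Kim's lemma, should be essential.
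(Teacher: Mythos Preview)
Your overall strategy---contrapositive, extract an array from the failure of co-$\mathrm{NSOP}_1$, use the internal $\mathrm{NSOP}_1$ independence theorem to build a Conant-style configuration, then invoke Fact \ref{4-sop3fact}---is exactly the paper's. The gap is in your proposed ``bridge'' between the external formula $\varphi(x,y)$ and the internal $\mathrm{NSOP}_1$ theory.

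You write that the $M$-finitely satisfiable types $q_0,q_1$ ``restrict to $M$-invariant types on $\mathcal{M}_p$'' and that you will apply the internal independence theorem ``over $M$.'' But $M$ is not an elementary substructure of $\mathcal{M}_p$: the domain of $\mathcal{M}_p$ is $p(\mathbb{M})$, and the parameters from $M$ have been absorbed into the language $\mathcal{L}_p$. Kim-independence in $\mathrm{Th}(\mathcal{M}_p)$ must be taken over a model of $\mathrm{Th}(\mathcal{M}_p)$, and there is no reason $p(M)$ (or anything else you have named) is such a model. So the phrase ``$b_i$ is internally Kim-independent of $a_{\le i}$ over $M$'' has no clear meaning, and the internal independence theorem cannot be invoked as you describe.

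The paper's fix is to translate consistency into graph-theoretic data \emph{before} passing inside. One introduces the characteristic-sequence relations $R_n(y_1,\ldots,y_n) := R_{\exists x\bigwedge_i \varphi(x,y_i)}$, which are $0$-definable in $\mathcal{M}_p$; ``$\{\varphi(x,c_j)\}$ consistent'' becomes ``$\{c_j\}$ is an $R_\infty$-clique,'' a purely internal statement. Then the entire (2$\Rightarrow$3)-style argument is carried out \emph{inside} $\mathrm{Th}(\mathcal{M}_p)$: one Skolemizes $\mathcal{M}_p$, takes $\mathcal{M} = \mathrm{dcl}_{\mathrm{Sk}}(\bar{c}_{<\omega})$ as a model of that theory, and obtains $\mathcal{M}$-finitely satisfiable types $p_0,p_1$ whose Morley sequences are respectively a clique and a $2$-anticlique. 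The inductive construction then uses Kim-independence and the independence theorem \emph{over $\mathcal{M}$} in $\mathrm{Th}(\mathcal{M}_p)$, building $c_i,d_i$ with $d_0,\ldots,d_m,c_{m+1},\ldots,c_n$ a $p_0$-Morley sequence (hence a clique) and $c_md_n$ beginning a $p_1$-Morley sequence (hence $\neg R_2(c_m,d_n)$), while maintaining $c_{\le n}\ind^K_{\mathcal{M}} d_{\le n}$. Since clique and anticlique in $R_\infty$ translate back to consistency and inconsistency of instances of $\varphi$ in $T$, Fact \ref{4-sop3fact} yields $\mathrm{SOP}_3$. Once you replace your ``over $M$'' with ``over a Skolem-hull model $\mathcal{M}\models\mathrm{Th}(\mathcal{M}_p)$'' and phrase (1)--(2) as clique/anticlique conditions on $R_n$, your outline becomes the paper's proof.
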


\begin{proof}
    
Again following the arguments of Chernikov and Ramsey \cite{CR15} and Kaplan and Ramsey \cite{KR17}, we start by carrying out the arguments of Definition 3.1, (1 $\Leftarrow$ 2' $\Leftarrow$ 3 (for $2$-Kim-dividing)) \textit{internally} to $\mathcal{M}_{p}$. Since the consistency in the definition of (1) need not be witnessed by a realization of $p(x)$, we will no longer be dealing with actual consistency or inconsistency of instances of $\varphi(x, y)$, but rather the definable relations in $\mathcal{M}_{p}$ corresponding to this consistency, treated only as a definable hypergraph. This hypergraph will be part of the \textit{characteristic sequence} of $\varphi(x, y)$, introduced by Malliaris in \cite{Mal10}.

Suppose $p(x)$ is not co-$\mathrm{NSOP}_{1}$. Let $\varphi(x, y) \in L(M)$, $b_{\eta} \subseteq p(\mathbb{M})$, $\eta \in 2^{\omega}$ be as in Definition 3.1.1. By compactness, we can replace $2^{< \omega}$ with $2^{<\kappa}$, for large $\kappa$. Define $R_{n}(y_{1}, \ldots, y_{n}) =: R_{\exists x \varphi(x, y_{1}) \wedge \ldots \wedge \varphi(x, y_{n})}(y_{1}, \ldots, y_{n}) \in \mathcal{L}_{p}$. Then  $\mathcal{M}_{p} \models R_{n} (b_{\eta_{1}}, \ldots, b_{\eta_{n}}) $ for $\eta_{1} \lhd \ldots \lhd \eta_{n} \in 2^{< \kappa}$, but for any $\eta_{2} \unrhd \eta_{1} \smallfrown \langle 0\rangle$, $\mathcal{M}_{p} \models \neg R_{2}(b_{\eta_{2}}, b_{\eta_{1} \smallfrown \langle 1\rangle})$.

For a sequence of relations $\{R_{n}\}_{n < \omega}$ on a set, where $R_{n}$ is an $n$-ary relation, call a sequence $\{a_{i}\}_{i \in I}$ a \textit{clique} if for $i_{1}, \ldots, i_{n} \in I$, $(a_{1}, \ldots a_{i_{n}}) \in R_{n}$, and an $n$-\textit{anticlique} if for distinct $i_{1}, \ldots, i_{n} \in I$, $(a_{1}, \ldots a_{i_{n}}) \notin R_{n}$. Choose a Skolemization of $\mathcal{M}_{p}$. We show that there is an array $\{c_{i, j}\}_{i = 0, 1, j< \omega}$, $c_{i, j} \in \mathcal{M}_{p}$, such that $\{ c_{0, j}\}_{j < \omega}$ is a clique, $\{c_{1, j}\}_{j < \omega}$ is a $2$-anticlique, and $c_{0, j} \equiv^{\mathcal{L}^{\mathrm{Sk}}_{p}}_{c_{0, < j}c_{1, < j}} c_{1, j}$ for each $j < \omega$. We may follow the proof of \cite{CR15}, Proposition 5.6, that we cited in the direction (1$\Rightarrow$ 2') of Definition 3.1. We sketch the argument: we will draw the $c_{i,j}$ from $\{b_{\eta}\}_{\eta \in \omega^{\kappa}}$. Suppose that, for $1 \leq i \leq n$ $c_{i, 0}=b_{\lambda_{i}}$, and $c_{i, 1}=b_{\eta_{i}}$ are already chosen to satisfy these properties, with $\eta_{j} \wedge \lambda_{j} \rhd \lambda_{i}$ and $\lambda_{i} \unrhd  (\eta_{i} \wedge \lambda_{i})\smallfrown \langle 0 \rangle, \eta_{i} \unrhd  (\eta_{i} \wedge \lambda_{i})\smallfrown \langle 1 \rangle $, for $1 \leq i < j \leq n$. Then using the pigeonhole principle, choose nodes $\lambda_{n+1} =\lambda_{n}\smallfrown \langle 0 \rangle^{\kappa_{1}} \smallfrown \langle 1\rangle$, $\eta_{n+1} = \lambda_{n}\smallfrown \langle 0 \rangle^{\kappa_{2}} \smallfrown \langle 1\rangle$ for $\kappa_{1} < \kappa_{2} < \kappa$ such that $c_{n+1, 0}=c_{\lambda_{n+1}}$ and $c_{n+1, 1}=c_{\eta_{n+1}}$ are such that $c_{n+1, 0} \equiv^{\mathcal{L}^{\mathrm{Sk}}_{p}}_{c_{0, \leq n}c_{1, \leq n}} c_{n+1, 1}$.

We next find a model $\mathcal{M}$ of the theory of $\mathcal{M}_{p}$ and $\mathcal{M}$-invariant Morley sequences $\{b_{i}\}_{i < \omega}$ in the $\mathcal{M}$-invariant type $p_{0}$ and $\{b'_{i}\}_{i < \omega}$ in the $\mathcal{M}$-invariant type $p_{1}$, such that $b_{0}= b'_{0}$, $\{b_{i}\}_{i < \omega}$ is a clique, and $\{b'_{i}\}_{i < \omega}$ is a $2$-anticlique.\footnote{It was observed by Hyoyoon Lee, Byunghan Kim, and the other participants of the Yonsei University logic seminar that the proof of Proposition 3.14 of \cite{KR17} actually shows that in a $\mathrm{SOP}_{1}$ theory, there is a formula that $2$-Kim-divides for which Kim's lemma fails. This is the ``internal" version of this observation.} As in the proof of (3 $\Rightarrow$ 2) of Definition 3.1, we follow the proof of Proposition 3.14 of \cite{KR17}. By Ramsey's theorem and compactness, we can choose $\{\bar{c}_{j}\}_{j < \omega}$ indiscernible in the theory of $\mathcal{M}_{p}^{\mathrm{Sk}}$. Let $\mathcal{M} = \mathrm{dcl}_{\mathrm{Sk}}(\bar{c}_{<\omega}) $, and let $\mathcal{M}'\succ \mathcal{M}$ be sufficiently saturated. Choose non-principal ultrafilters $U_{i}$, $i = 0, 1$, containing $c_{i,<\omega}$, and let the global types $p_{i}(x)=\{\varphi(x, c) \in \mathcal{M}': \varphi(\mathcal{M}, c) \in U_{i}\}$, so that each of the $p_{i}$ are finitely satisfiable over $\mathcal{M}$. It can be shown from $c_{0, j} \equiv^{\mathcal{L}_{p}^{\mathrm{Sk}}}_{c_{0, < j}c_{1, < j}} c_{1, j}$ that $p_{0}|_{\mathcal{M}} = p_{1}|_{\mathcal{M}}$; let $b$ realize this. Then we can choose $b$ so that a $p_{0}$-Morley sequence $\{b_{i}\}_{i < \omega}$ with $b_{0} = b$ is a clique, and a $p_{1}$-Morley sequence $\{b'_{i}\}_{i < \omega}$ with $b'_{0} = b$ is a $2$-anticlique.

Finally, we show, using the technique of Theorem 7.17 of \cite{Co15}, that the $R_{n}$ have the compatible order property ($\mathrm{SOP}_{3}$), Definition 3.10 of \cite{Mal10}. By compactness and Fact \ref{4-sop3fact}, the following will translate into an instance of $\mathrm{SOP}_{3}$ in $T$, a contradiction. We find an array $c_{0}, \ldots, c_{n}, \ldots, d_{0}, \ldots, d_{n}, \ldots$, with the following properties:

(1) For $m < n$, $d_{0}, \ldots, d_{m}, c_{m+1}, \ldots, c_{n}$ form a Morley sequence in $p_{0}$, so a clique.

(2) For $m < n$, $c_{m} d_{n} $ begin a Morley sequence in $p_{1}$, so $\neg R_{2}(c_{m}, d_{n})$. 

(3) $c_{0}, \ldots, c_{n}, \ldots\ind_{\mathcal{M}}^{K} d_{0}, \ldots, d_{n}, \ldots$

Suppose we have constructed $c_{0}, \ldots, c_{n}, d_{0}, \ldots, d_{n}$ satisfying these properties up to $n$. We find $c_{n+1}, d_{n+1}$. To find $d_{n+1}$, let $d'_{n+1} \models p_{0}(x)|_{\mathcal{M}d_{0}\ldots d_{n}}$ so $d'_{n+1} \ind^{K}_{\mathcal{M}}d_{0}, \ldots, d_{n}$. By (3), $c_{0}, \ldots, c_{n}\ind_{\mathcal{M}}^{K} d_{0}, \ldots, d_{n}$. Finally, if $d''_{n+1} \models p_{1}(x)|_{\mathcal{M}c_{0}\ldots c_{n}}$, by symmetry of Kim-independence, Fact \ref{4-symmetry}, $c_{0} \ldots c_{n} \ind^{K} d''_{n+1}$. So by the independence theorem (Fact \ref{4-independence}) and an automorphism, there is $d_{n+1} \models  p_{0}(x)|_{\mathcal{M}d_{0} \ldots d_{n}} \cup p_{1}(x)|_{\mathcal{M}c_{0}\ldots c_{n}}$ with $c_{0}, \ldots, c_{n} \ind_{\mathcal{M}}^{K} d_{0}, \ldots, d_{n} d_{n+1}$. Finally choose $c_{n+1} \models p_{0}(x)|_{\mathcal{M}c_{0}, \ldots c_{n} d_{0} \ldots d_{n}} $. It remains to show that this preserves (3). This follows from the following claim:

\begin{claim}
For any $a, b, c$, $M$, if $a \ind^{K}_{M} b$ and $\mathrm{tp}(c/Mab)$ extends to an $M$-invariant type $q(x)$, then $a \ind^{K}_{M} bc$.
\end{claim}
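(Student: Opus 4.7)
The claim is a standard ``pair lemma'' for Kim-independence in $\mathrm{NSOP}_{1}$ theories, applied here inside the $\mathrm{NSOP}_{1}$ theory of $\mathcal{M}_{p}$. The plan is to fix any formula $\varphi(x,b,c) \in \mathrm{tp}(a/Mbc)$ and show it does not Kim-divide over $M$; by Kim's lemma (Fact \ref{4-kimslemma}) it then suffices to exhibit a single $M$-invariant Morley sequence $\{(b_{i},c_{i})\}_{i<\omega}$ starting at $(b,c)$ along which $\{\varphi(x,b_{i},c_{i})\}_{i<\omega}$ is consistent, and this consistent set will be realized by $a$ itself.

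To build the sequence, I will first apply the chain condition for $\ind^{K}$ in $\mathrm{NSOP}_{1}$ to the hypothesis $a \ind^{K}_{M} b$, obtaining an $M$-invariant Morley sequence $\{b_{i}\}_{i<\omega}$ in some global $M$-invariant type $p'(y)$, with $b_{0}=b$ and $\{b_{i}\}$ indiscernible over $Ma$. Using that any two realizations of $q|_{Mab}$ are $Mab$-conjugate, I then compose with an $Mab$-automorphism acting on the tail $\{b_{i}\}_{i \geq 1}$ (this preserves the Morley, indiscernibility and $\ind^{K}$ properties by $M$-invariance of $p'$) to arrange $c \models q|_{Ma\{b_{i}\}_{i<\omega}}$. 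Now inductively set $c_{0}=c$ and $c_{i} \models q|_{Ma\{b_{j}\}_{j<\omega}c_{<i}}$, so that $\{c_{i}\}_{i<\omega}$ is a $q$-Morley sequence over $Ma\{b_{j}\}_{j<\omega}$, and in particular indiscernible over this set. A straightforward induction on tuple length, using the $Ma$-indiscernibility of $\{b_{i}\}$ together with the $M$-invariance of $q$ to transport the $c$-types across the $Ma$-automorphisms witnessing $b$-indiscernibility, then shows that $\{(b_{i},c_{i})\}_{i<\omega}$ is itself $Ma$-indiscernible; and since both coordinates arise from $M$-invariant global types, the combined sequence is an $M$-invariant Morley sequence in the natural tensor of $p'$ and $q$.

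With the sequence in hand, $\models \varphi(a,b,c)$ and $Ma$-indiscernibility of $\{(b_{i},c_{i})\}$ give $\models \varphi(a,b_{i},c_{i})$ for all $i$, so $\{\varphi(x,b_{i},c_{i})\}_{i<\omega}$ is consistent and Kim's lemma yields that $\varphi(x,b,c)$ does not Kim-divide over $M$; as $\varphi$ was arbitrary, $a \ind^{K}_{M} bc$. The hardest step will be the combined bookkeeping: namely, the realignment ensuring $c \models q|_{Ma\{b_{i}\}}$ without disturbing the output of chain condition, together with the verification that the combined sequence $\{(b_{i},c_{i})\}$ is both $Ma$-indiscernible and a Morley sequence in an $M$-invariant type; the remaining appeals to chain condition, $M$-invariance of $q$, and Kim's lemma are routine.
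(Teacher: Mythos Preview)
Your realignment step and the verification that $\{(b_i,c_i)\}$ is $Ma$-indiscernible are fine, but the assertion that this sequence is an $M$-invariant Morley sequence ``in the natural tensor of $p'$ and $q$'' is where the argument breaks. In your construction the entire Morley sequence $\{b_i\}_{i<\omega}$ is fixed first (via the chain condition), and only afterwards do you choose $c_i \models q|_{Ma\{b_j\}_{j<\omega}c_{<i}}$. For $\{(b_i,c_i)\}$ to be a Morley sequence in $q \otimes p'$ you would need $b_i \models p'|_{M(b_jc_j)_{j<i}}$, but you only know $b_i \models p'|_{Mb_{<i}}$; since each $c_j$ with $j<i$ was built realizing $q$ over a set that already contains $b_i$, there is no reason $\mathrm{tp}(b_i/Mb_{<i}c_{<i})$ should be the restriction of $p'$, or of any $M$-invariant type. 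The slogan ``both coordinates arise from $M$-invariant global types'' does not by itself yield an invariant Morley sequence when the coordinates are not built in an interleaved way. And if you do interleave (choosing $b_{i+1} \models p'|_{M(b_jc_j)_{j\leq i}}$), you get a genuine Morley sequence in $q\otimes p'$ but lose the $Ma$-indiscernibility that the chain condition handed you, so $a$ no longer realizes the formulas along the sequence. Kim's lemma requires an \emph{invariant Morley} sequence along which the instances are consistent; $Ma$-indiscernibility alone is not enough.

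The paper avoids building a Morley sequence in $bc$ altogether by passing to the other side via symmetry of $\ind^{K}$ in $\mathrm{NSOP}_1$ (Fact~\ref{4-symmetry}, through Claim~\ref{4-inv5}): from $b \ind^{K}_{M} a$ one takes an $Mb$-indiscernible invariant Morley sequence $I=\{a_i\}$ with $a_0=a$, arranges $c \models q|_{MbI}$ by an $Mab$-automorphism, and then uses Ramsey and compactness to make $I$ indiscernible over $Mbc$. This gives an invariant Morley sequence in $a$ that is $Mbc$-indiscernible, hence $bc \ind^{K}_{M} a$, and symmetry finishes. Working with sequences in $a$ rather than in $bc$ means one never has to splice two invariant types into a single Morley sequence.
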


This follows from Claim \ref{4-inv5} below, using the fact that $\ind^{K^{+}} = \ind^{K}$ in the language of that claim (Kim's lemma, Fact \ref{4-kimslemma}, and compactness) and symmetry of $\ind^{K}$ (Fact \ref{4-symmetry}).

\end{proof}

\begin{example}\label{4-example3}
    Theorem \ref{3-main3} becomes false if we relax $\mathrm{NSOP}_{3}$ to $\mathrm{NSOP}_{4}$. Let $T$ be the model companion of (undirected) triangle-free tripartite graphs, with the partition denoted by unary predicates $P_{1}, P_{2}, P_{3}$. Let $M$ be a model and $p(x) = \{P_{1}(x) \vee P_{2}(x)\}$. Then $T$ is $\mathrm{NSOP}_{4}$, in fact a free amalgamation theory in the sense of Conant (\cite{Co15}). Moreover, $p(x)$ is internally $\mathrm{NSOP}_{1}$, in fact, internally simple. The associated theory has quantifier elimination in the language with unary predicates for $P_{1}$, $P_{2}$, $P_{m}$ denoting $x E m$ for each $m \in M$, and a binary relation symbol for the edge relation between elements of $P_{1}$ and elements of $P_{2}$. It is the model companion of graphs with interpretations for the unary predicates $P_{1}, P_{2}$ and $P_{m}$, such that $P_{1}$ and $P_{2}$ partition the graph and have no edges within them, there are no edges within $P_{m}$ for any $m \in M$, $P_{m_{1}}$ and $P_{m_{2}}$ are disjoint for $m_{1}, m_{2} \in M$ with $M \models m_{1} E m_{2}$, and for $i = 1, 2$ $P_{m}$ is disjoint from $P_{i}$ when $M \models P_{i} (m)$. In this form, the theory associated to $p(x)$ can be easily seen to be simple (for example, check that the relation $A \cap B = C$ coincides with forking-independence).
    
    However, $p(x)$ is not co-$\mathrm{NSOP}_{1}$: let $\varphi(x, y) =: xEy_{1} \wedge x E y_{2}$. For $\eta \in 2^{<\omega}$, choose $b_{\eta} = (b^{1}_{\eta}, b^{2}_{\eta})$ with, for any $\eta, \nu \in 2^{< \omega}$, $b^{i}_{\eta} \in P_{i}$, $\models \neg b^{i}_{\eta} E m$ for $i = 1, 2$ and $m \in M$, and $ \models b^{1}_{\eta} E b^{2}_{\nu}$ if and only if $\eta$ and $\nu$ are incomparable. This is possible, as we create no triangles. But $\varphi(x, y)$, $b_{\eta}$ witness the failure of Definition 3.1.1.
\end{example}

This proof can be viewed as an instance of a more general phenomenon. In this proof, the $R_{n}$ are the restriction to $p_{n}(\mathbb{M})$ of the \textit{characteristic sequence} of $\varphi(x, y)$, defined by Malliaris:

\begin{definition}(\cite{Mal10})
   Let $\varphi(x, y)$ be a formula. The \emph{characteristic sequence} of $\varphi(x, y)$ is the sequence of hypergraphs, on the vertices $\mathbb{M}^{|y|}$, defined by

   $$R_{n}(a_{1}, \ldots, a_{n}) =: (a_{1}, \ldots, a_{n}) \models \exists x \wedge_{i=1}^{n}\varphi(x, y_{i})$$
\end{definition}

On the other hand, within $\mathcal{M}_{p}$, the $R_{n}$ are just a sequence of hypergraphs, and do not describe a pattern of consistency internally to $\mathcal{M}_{p}$. Nonetheless, by showing that a particular configuration, the \textit{compatible order property}, arises among the $R_{n}$, we get a description of the complexity of $\varphi(x,y)$ \textit{in the original theory $T$}. In \cite{Mal10}, Malliaris introduces some hypergraph configurations corresponding, via the characteristic sequence, to consistency patterns (in the sense of \cite{GM22}) in a first-order formula. We introduce some additional definitions to cover the case of the tree property, $\mathrm{DCTP}_{2}$, and $\mathrm{SOP}_{1}$; the first of these comes from Observation 5.20 of \cite{Mal10}.

\begin{definition}
    Let $R_{\infty}=(V,\{R_{n}\}_{n < \omega})$ be a sequence of hypergraphs on a common set of vertices $V$, where $R_{n}$ is an $n$-ary edge relation Then $R_{\infty}$ has

    (1) An $(\omega, \omega, 1)$-array if there is an array $\{b_{ij}\}_{i, j \in \omega}$ such that there is some fixed $k$ such that, for all $i$, $\{b_{ij}\}_{j \in \omega}$ is a $k$-anticlique, and for any $\sigma \in \omega^{\omega}$, $\{ b_{i\sigma(i)}\}_{i \in \omega}$ is a clique. (Definition 3.4.2, \cite{Mal10}. By Claim 3.8, \cite{Mal10}, $\mathrm{TP}_{2}$ is equivalent to the presence of an $(\omega, \omega, 1)$-array in the characteristic sequence of a formula. 

    (2) The \emph{compatible order property} if there are $c_{0}, \ldots, c_{n}, \ldots, d_{0}, \ldots, d_{n}, \ldots$ such that for $m < n$, $d_{0}, \ldots, d_{m}, c_{m+1}, \ldots, c_{n}$ form a clique, while for $m < n$, $\neg R_{2}(c_{m}, d_{n})$. (Definition 3.10, \cite{Mal10}. In Conclusion 6.15 of \cite{Mal10}, $\mathrm{SOP}_{3}$ is shown to be equivalent to the compatible order property in the characteristic sequence of a formula.)

    (3) $\mathrm{MTP}$ if there is some fixed $k$ and parameters $\{b_{\eta}\}_{\eta \in \omega^{< \omega}}$ such that for each path $\sigma \in \omega^{\omega}$, $\{b_{\sigma|_{n} }\}_{n \in \omega}$ is a clique, but for each node $\eta \in \omega^{< \omega}$, $\{ b_{\eta \smallfrown \langle n \rangle }\}_{n \in \omega}$ is a $k$-anticlique. (In Observation 5.10 of \cite{Mal10}, the failure of a formula to be simple is observed to be equivalent to $\mathrm{MTP}$ in the characteristic sequence.)

    (4) $\mathrm{MSOP}_{1}$ if there are parameters $\{b_{\eta}\}_{\eta \in 2^{<\omega}}$ such that $\{ b_{\sigma \upharpoonleft n}\}_{n \in \omega}$ is a clique for any $\sigma \in 2^{\omega}$, but for any $\eta_{2} \unrhd \eta_{1} \smallfrown \langle 0\rangle$, $\{b_{\eta_{2}},  b_{\eta_{1} \smallfrown \langle 1\rangle}\}$ is a $2$-anticlique.

    (5) $\mathrm{MDCTP}_{2}$ if for some fixed $k$, there are parameters $\{b_{\eta}\}_{\eta \in 2^{<\omega}}$ such that $\{b_{\sigma \upharpoonleft n}\}_{n \in \omega}$ is a $k$-anticlique for any $\sigma \in 2^{\omega}$, but for any descending comb $\eta_{1} \ldots, \eta_{l} \in 2^{< \omega}$ (Definition \ref{4-descendingcomb}), $\{ b_{\eta_{i}}\}_{i=1}^{l}$ is a clique.
\end{definition}

\begin{remark}
    The letter $\mathrm{M}$ in $\mathrm{MTP}$, $\mathrm{MSOP}_{1}$ and $\mathrm{MDCTP}_{2}$ stands for \textit{Malliaris}. \end{remark}

Note that these properties are all \textit{graph-theoretic} in the sense of Malliaris, \cite{Mal10}, referring only to incidence patterns of the edges, rather than their consistency. They are similar in this sense to stability or $\mathrm{NIP}$, which make no reference to consistency but only ask for graph-theoretic configurations. In \cite{Sh90}, Shelah shows the following:

\begin{fact}
    Let $R(x, y)$ be an unstable formula, and assume that all Boolean combinations of instances of $R(x, y)$ are $\mathrm{NSOP}$. Then $R(x, y)$ has the independence property.
\end{fact}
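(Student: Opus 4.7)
The plan is to argue by contrapositive: assuming $R(x,y)$ has the order property and is $\mathrm{NIP}$, I will exhibit a Boolean combination of instances of $R$ that has $\mathrm{SOP}$. This is Shelah's classical dichotomy ``unstable implies $\mathrm{IP}$ or $\mathrm{SOP}$'' at the level of a single formula.

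First, by Ramsey and compactness, extract from the order property an indiscernible sequence $(a_i, b_i)_{i \in \mathbb{Q}}$ with $\models R(a_i, b_j)$ iff $i < j$. Consider the Boolean combination $\varphi(x; y_1, y_2) := R(x, y_1) \wedge \neg R(x, y_2)$. A direct computation gives $\models \varphi(a_k; b_n, b_m)$ iff $m \leq k < n$, so $\varphi(x; b_n, b_m)$ is consistent precisely when $m < n$. Hence the relation $\exists x\,\varphi(x; y_1, y_2)$ defines a strict linear order on $\{b_i\}$, which already witnesses $\mathrm{SOP}$ at the existential level.

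The substantive step is to internalize this order inside the quantifier-free Boolean closure of instances of $R$. Here $\mathrm{NIP}$ enters: by Sauer--Shelah the family $\{R(\cdot, b)\}$ has polynomial shatter function, so over any finite $\{b_{i_1}, \ldots, b_{i_n}\}$ there are only polynomially many distinct $R$-types. Combined with the indiscernibility of the sequence, a pigeonhole-plus-compactness argument in the spirit of Shelah \cite{Sh90} shows that on a suitable subsequence the strict inequality ``$m < n$'' is detected by a fixed quantifier-free Boolean combination $\psi^*(y_1, y_2; \bar c)$ of instances of $R$ evaluated at parameters $\bar c$ drawn from the indiscernibles; this $\psi^*$ is irreflexive, transitive, and has an infinite chain, hence witnesses $\mathrm{SOP}$, contrary to the hypothesis. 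The main obstacle is precisely this quantifier-elimination step, which is where $\mathrm{NIP}$ does the real work: without it the sets cut out by Boolean combinations of instances of $R$ could be too complicated to encode the order, whereas the Sauer--Shelah bound forces them to stabilize into a fixed pattern on the indiscernible sequence, which is the substantive content of the Fact.
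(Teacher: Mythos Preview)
The paper does not actually prove this fact; it is stated as a \textbf{Fact} with a bare citation to Shelah \cite{Sh90} (Theorem II.4.7), so there is no ``paper's own proof'' to compare against. Your contrapositive strategy---assume the order property and $\mathrm{NIP}$, produce a Boolean combination with $\mathrm{SOP}$---is indeed the classical one.

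That said, your write-up has a genuine gap at exactly the point you flag as ``the substantive step.'' You assert that the Sauer--Shelah polynomial bound, combined with indiscernibility and a ``pigeonhole-plus-compactness argument,'' yields a fixed Boolean combination $\psi^*(y_1,y_2;\bar c)$ detecting $m<n$. But you never construct $\psi^*$, and the polynomial shatter-function bound is not obviously the right lever: over a finite set $\{a_{i_1},\ldots,a_{i_n}\}$ it bounds the \emph{number} of $R$-types, not their structure, and here there are still infinitely many distinct $R$-types of the $b_j$ over the whole sequence (one for each cut). Shelah's actual argument does not go through Sauer--Shelah; it uses the equivalent characterization of $\mathrm{NIP}$ as \emph{bounded alternation along an indiscernible sequence}. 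One fixes the maximal alternation number $k$ achievable by any $c$ along $(a_i)$, observes $k\geq 1$ (witnessed by the $b_j$), and then uses maximality of $k$ to write down explicitly a Boolean combination in the $R(x_\ell,y)$'s whose instances on the sequence encode a strict order with infinite chains. Without carrying out this construction (or a genuine alternative), your paragraph is a statement of what needs to be proved rather than a proof, and invoking Sauer--Shelah obscures rather than supplies the missing idea.
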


Note that the form of this result is as follows: if a graph has one graph-theoretic configuration (instability), and an ambient model-theoretic tameness property ($\mathrm{NSOP}$, indeed quantifier-free $\mathrm{NSOP}$), then it has a more complicated graph-theoretic configuration (the independence property). It was Malliaris who first implicitly asked, in the context of strengthenings of the compatible order property (Remark 7.12, \cite{Mal12}), whether ambient classification-theoretic properties imply further graph-theoretic complexity gaps for hypergraphs. In the remainder of this section, we observe that model-theoretic tameness properties of hypergraph sequences that refer to consistency, such as simplicity and $\mathrm{NSOP}_{1}$, provide additional information about their graph-theoretic structure, just as Shelah shows a gap between simplicity and independence in $\mathrm{NSOP}$ graphs. We then further observe that the connection between internal properties of types and external properties of theories, including the aforementioned work of Chernikov on co-simplicity (\cite{Che14}), can be reinterpreted in terms of these graph-theoretic complexity gaps for model-theoretically tame hypergraphs.

\begin{proposition}\label{4-hyp}
    
Let $R_{\infty}=(V,\{R_{n}\}_{n < \omega})$ be sequence of hypergraphs on a common set of vertices $V$, where $R_{n}$ is an $n$-ary edge relation.

    (1) If $R_{\infty}$ is simple (in the hypergraph language) and has $\mathrm{MTP}$, it has an $(\omega, \omega, 1)$-array.
    
    (2) If $R_{\infty}$ is $\mathrm{NSOP}_{1}$ and has $\mathrm{MSOP}_{1}$, then it has $\mathrm{MDCTP}_{2}$ and the compatible order property. 

In fact, for (1), it suffices that no quantifier-free formula has the tree property, and similarly for (2) and $\mathrm{SOP}_{1}$.
\end{proposition}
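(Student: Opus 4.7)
The plan is to mirror the proof of Theorem \ref{3-main3}, carried out entirely inside the theory of the hypergraph sequence $R_{\infty}$ (which now plays the role of both $\mathcal{M}_{p}$ and the ambient theory). That proof decomposes into three transferable stages: (i) a tree-to-array extraction in the style of \cite{CR15} Proposition 5.6; (ii) an ultrafilter-and-Ramsey construction producing two $\mathcal{M}$-finitely-satisfiable global types $p_{0},p_{1}$ over a submodel $\mathcal{M}\preceq R_{\infty}$ with $p_{0}|_{\mathcal{M}}=p_{1}|_{\mathcal{M}}$, one giving clique Morley sequences and the other giving anticlique Morley sequences; and (iii) an inductive construction using the NSOP$_{1}$ toolkit -- Kim's lemma (Fact \ref{4-kimslemma}), symmetry (Fact \ref{4-symmetry}), and the independence theorem (Fact \ref{4-independence}) -- to splice the two types together. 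In the present setting, the NSOP$_{1}$ hypothesis on $R_{\infty}$ in part (2) supplies the toolkit for stage (iii), while the simplicity hypothesis in part (1) (or the weaker ``no q.f.\ TP'') controls stage (ii) alone.

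For part (2), I start with an MSOP$_{1}$ witness $\{b_{\eta}\}_{\eta\in 2^{<\omega}}$ in $R_{\infty}$, pass to a Skolemization of the theory of $R_{\infty}$, and run stages (i) and (ii) as in the first two steps of the main theorem's proof, producing $p_{0},p_{1}$ whose Morley sequences are respectively cliques and $2$-anticliques. Then I invoke stage (iii) verbatim -- the inductive construction of $c_{0},c_{1},\ldots,d_{0},d_{1},\ldots$ satisfying properties (1)--(3) of the main theorem's proof -- to obtain the compatible order property in $R_{\infty}$. The $\omega$-MDCTP$_{2}$ configuration is read off the same data: the alternating $p_{0}$/$p_{1}$ Morley blocks, arranged along descending combs versus paths in a tree obtained by compactness from the iterated construction, give cliques along descending combs and $k$-anticliques along paths for a suitable fixed $k$.

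For part (1) the argument is shorter: from an MTP witness, (i) and (ii) yield $p_{0},p_{1}$ over $\mathcal{M}\preceq R_{\infty}$ with matching restrictions, and parallel Morley sequences arranged in an $\omega\times\omega$ grid -- rows drawn from $p_{0}$ (cliques), cross-row seeds from $p_{1}$ ($k$-anticliques) -- give exactly an $(\omega,\omega,1)$-array. The no-q.f.-TP hypothesis is used only in stage (ii), to ensure that the ultrafilter restriction produces a consistent common $p_{0}|_{\mathcal{M}}=p_{1}|_{\mathcal{M}}$. The main obstacle, as in the proof of Theorem \ref{3-main3}, is coordinating the independence-theorem splicing in stage (iii) of part (2): this requires the invariant-extension claim (if $a\ind^{K}_{\mathcal{M}}b$ and $\mathrm{tp}(c/\mathcal{M}ab)$ is $\mathcal{M}$-invariant then $a\ind^{K}_{\mathcal{M}}bc$) invoked at the end of the main theorem's proof, which holds in any NSOP$_{1}$ theory and hence in $R_{\infty}$. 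The parenthetical strengthening to q.f.\ hypotheses is automatic, since every named configuration (MTP, MSOP$_{1}$, $(\omega,\omega,1)$-array, compatible order, $\omega$-MDCTP$_{2}$) is q.f.\ in the hypergraph language, so the Skolemization and all subsequent extractions respect q.f.\ types over the base.
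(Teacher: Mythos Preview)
Your plan for the compatible order property in (2) is correct and matches the paper: stages (i)--(iii) from the proof of Theorem \ref{3-main3} transfer verbatim, and the paper says exactly this.

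There are two genuine gaps.

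First, for $\omega$-$\mathrm{MDCTP}_{2}$ in (2), your one-line claim that the tree is ``obtained by compactness from the iterated construction'' is not an argument. The iterated construction in stage (iii) produces a \emph{linear} sequence $c_{0},c_{1},\ldots,d_{0},d_{1},\ldots$; there is no tree structure to extract. The paper instead runs a separate inductive tree construction: it defines \emph{generic trees} of height $n$ (subtrees at a node are Kim-independent, each node realizes $p_{1}|$ restricted to its descendants), proves a claim (Claim \ref{4-gtree}) that any generic tree can be moved over an arbitrary set $A$ so that every node realizes $p_{0}|_{MA}$ and $A\ind^{K}_{M}$ the tree, and then glues two copies of a height-$n$ generic tree under a new root realizing $p_{1}$ to get height $n+1$. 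This uses the independence theorem at each step of the height induction and at each level of the tree, not just once; it is a real construction that you have not supplied.

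Second, your treatment of (1) does not work. Stages (i) and (ii) are specifically for $\mathrm{SOP}_{1}$ trees (the \cite{CR15} Proposition 5.6 extraction); the $\mathrm{MTP}$ tree has a different combinatorics, and you have not said how to get $p_{0},p_{1}$ from it. More seriously, even given such types, ``parallel Morley sequences arranged in a grid'' does not produce an $(\omega,\omega,1)$-array: the nontrivial content is that \emph{every path} through the grid is a clique, not just one diagonal. The paper does something different. From $\mathrm{MTP}$ one gets (by the standard failure of Kim's lemma for dividing) a model $M$, an element $b$, an indiscernible $k$-anticlique $I$ starting at $b$, and a Morley sequence $J$ (in a nonforking type) starting at $b$ that is a clique. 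Then one stretches $J$ row by row using the \emph{chain condition} in simple theories: at step $n+1$, from $I_{\leq n}b'_{>n+1}\ind_{M}b'_{n+1}$ one finds $I^{n+1}\equiv_{M}I$ with $b^{n+1}_{0}=b'_{n+1}$, indiscernible over $I_{\leq n}b'_{>n+1}$. This is the key use of simplicity, and it is not the independence theorem but the chain condition. Your claim that ``the no-q.f.-TP hypothesis is used only in stage (ii)'' is therefore wrong.

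Finally, the q.f.\ strengthening is not automatic from the configurations being q.f.: the issue is whether Kim's lemma, symmetry, the independence theorem, and the chain condition hold when only q.f.\ formulas are assumed simple or $\mathrm{NSOP}_{1}$. This requires redoing those tools relative to q.f.\ formulas, which you have not addressed.
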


\begin{proof} (Sketch)
    The argument for (1) is extracted from Chernikov's proof in \cite{Che14} that simple types are co-simple. In particular, we notice that the proof Lemma 6.13 of \cite{Che14} works when the rows are general indiscernible sequences, not just Morley sequences, and relies only on the internal simplicity of a type, not the full definition of a simple type. Suppose $R_{\infty}$ has $\mathrm{MTP}$, but is simple as a structure in the language of hypergraph sequences. By the proof, which can be found in a standard reference on simplicity theory such as \cite{Kim14}, that formulas with the tree property fail Kim's lemma for dividing, there is a model $M$ of the theory of $R_{\infty}$, some element $b$ of the monster model, and some indiscernible $k$-anticlique $I=\{b_{i}\}_{i \in \omega}$ starting with $b_{0}=b$, as well as a Morley sequence $J=\{b'_{i}\}_{i \in \omega}$ starting with $b'_{0}=b$ and forming a clique. Now suppose, by induction, that for $i \leq n$ there are are $I^{i}=\{b_{j}^{i}\}_{j < \omega}$ with $I^{i} \equiv_{M} I$ and $b_{0}^{i}=b'_{i}$ (so the $I^{i}$ are anticliques), such that for $\sigma \in \omega^{n}$, $\{b^{i}_{\sigma(i)}\}_{i \leq n} \smallfrown \{b'_{i}\}_{i \geq n+1}$ is a clique, and such that $I_{\leq n}\ind_{M} b'_{\geq n+1}$. By properties of independence in simple theories, $I_{\leq n} b_{>n+1}\ind_{M} b'_{ n+1}$. By the chain condition, take $I^{n+1} = \{b_{j}^{n+1}\}_{j < \omega}$, $I^{n+1} \equiv_{M} I$, with $b^{n+1}_{0}=b'_{n+1}$ such that $I_{n+1}$ is indiscernible over $MI_{\leq n} b'_{>n+1}$, and with $I_{\leq n} b'_{>n+1}\ind_{M} I_{ n+1}$. This suffices for the induction. Now the existence of an $(\omega, \omega, 1)$-array follows.

    For (2), suppose $R_{\infty}$ is $\mathrm{NSOP}_{1}$ in the hypergraph language, but has $\mathrm{MSOP}_{1}$. Then as in the proof of Theorem \ref{3-main3}, there is a model $\mathcal{M}$ of the theory of $R_{\infty}$ and there are $M$-invariant Morley sequences $\{b_{i}\}_{i < \omega}$ in the $M$-invariant type $p_{0}$ and $\{b'_{i}\}_{i < \omega}$ in the $M$-invariant type $p_{1}$, such that $b_{0}= b'_{0}$, $\{b_{i}\}_{i < \omega}$ is a clique, and $\{b'_{i}\}_{i < \omega}$ is a $2$-anticlique. To show $\mathrm{MDCTP}_{1}$, it suffices to find a tree $\{b_{\eta}\}_{\eta \in 2^{<\omega}}$ such that the paths, read downward, are Morley sequences in $p_{1}$, and the descending combs are Morley sequences in $p_{0}$. Formally, the construction will follow \cite{NSOP2}, Lemma 4.5. Say that a tree $\{c_{\eta}\}_{\eta \in 2^{\leq n}}$ is a \textit{generic tree} if for $\eta \in 2^{< n}$ $c_{\unrhd \eta \smallfrown \langle 0 \rangle } \ind^{K}_{M} c_{\unrhd \eta \smallfrown \langle 1 \rangle }$ (two subtrees at a node are Kim-independent), and $c_{\eta } \models p_{1}(x)|_{M c_{\rhd \eta}}$ (each node satisfies the restriction of $p_{1} (x)$ to its subtrees.) We prove the following claim (corresponding to Claim 4.6 of \cite{NSOP2}):

    \begin{claim}\label{4-gtree}
        Let $\{c_{\eta}\}_{\eta \in 2^{\leq n}}$ be a generic tree, and $A$ any set. Then there is some $\{c'_{\eta}\}_{\eta \in 2^{\leq n}} \equiv_{M} \{c_{\eta}\}_{\eta \in 2^{\leq n}}$ with $c'_{\eta} \models p_{0}(x)|_{MA}$ for each $\eta \in 2^{\leq n}$, and with $A \ind_{M}^{K} \{c'_{\eta}\}_{\eta \in 2^{\leq n}}$.
    \end{claim}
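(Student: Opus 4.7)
The plan is to proceed by induction on $n$. For the base case $n = 0$, the tree is a single node $c_\emptyset \models p_1|_M$: realize $c'_\emptyset \models p_0(x)|_{MA}$. Since $p_0|_M = p_1|_M$ (as established earlier in the proof of Theorem~\ref{3-main3}), $c'_\emptyset \equiv_M c_\emptyset$, and since $p_0$ is $M$-invariant, $c'_\emptyset \ind^u_M A$, whence $A \ind^K_M c'_\emptyset$ by symmetry of Kim-independence (Fact~\ref{4-symmetry}) in the ambient $\mathrm{NSOP}_1$ structure $\mathcal{M}_p$.

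For the inductive step, I would decompose a generic tree $\{c_\eta\}_{\eta \in 2^{\leq n+1}}$ as a root $c_\emptyset$ together with two subtrees $T_0, T_1$ of height $n$, where genericity gives $T_0 \ind^K_M T_1$ and $c_\emptyset \models p_1|_{MT_0 T_1}$. The recipe is: (i) apply the inductive hypothesis to $T_0$ with parameter set $A$, obtaining $T'_0 \equiv_M T_0$ with $A \ind^K_M T'_0$ and each node realizing $p_0|_{MA}$; (ii) apply it to $T_1$ with parameter set $A \cup T'_0$, obtaining $T'_1$ with $A T'_0 \ind^K_M T'_1$, whence $T'_0 \ind^K_M T'_1$ and $A \ind^K_M T'_0 T'_1$ by monotonicity; (iii) construct the root $c'_\emptyset$ by applying the independence theorem (Fact~\ref{4-independence}) inside $\mathcal{M}_p$, amalgamating $\mathrm{tp}(c_\emptyset/MT_0)$ transferred to $MT'_0$ (via an $M$-automorphism sending $T_0 \to T'_0$) with $\mathrm{tp}(c_\emptyset/MT_1)$ transferred to $MT'_1$, along $T'_0 \ind^K_M T'_1$; the resulting $c'_\emptyset$ realizes $p_1(x)|_{MT'_0 T'_1}$ and satisfies $c'_\emptyset \ind^K_M T'_0 T'_1$. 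Since $c'_\emptyset \models p_1|_M = p_0|_M$, the unnumbered claim from the proof of Theorem~\ref{3-main3} (that $a \ind^K_M b$ together with an $M$-invariant extension of $\mathrm{tp}(c/Mab)$ yields $a \ind^K_M bc$), applied to $A \ind^K_M T'_0 T'_1$ and the $M$-invariant extension $p_1$ of $\mathrm{tp}(c'_\emptyset/MT'_0 T'_1)$, extends to $A \ind^K_M c'_\emptyset T'_0 T'_1$, so every node of the constructed tree realizes $p_0|_{MA}$.

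The main obstacle I expect is ensuring the joint $M$-type $(T'_0, T'_1) \equiv_M (T_0, T_1)$, not merely the component-wise equalities $T'_i \equiv_M T_i$, since Kim-nonforking extensions are not unique and therefore the joint type of Kim-independent subtrees over $M$ is not determined by the two marginal types. I would address this by strengthening the inductive hypothesis to allow an enlarged base $MD$ with $D \ind^K_M \{c_\eta\}$: for every such $D$ and every $A$, there exists $\{c'_\eta\} \equiv_{MD} \{c_\eta\}$ with each node realizing $p_0|_{MDA}$ and $A \ind^K_{MD} \{c'_\eta\}$. In step (ii) above, I would first transfer $T_1$ through the $M$-automorphism $T_0 \to T'_0$ to obtain $T^*_1$ with $(T'_0, T^*_1) \equiv_M (T_0, T_1)$, then use the chain condition for Kim-independence with respect to invariant Morley sequences (extracted from a Morley sequence over $MT_0$ guaranteed by $T_0 \ind^K_M T_1$, together with $A \ind^K_M T'_0$) to arrange $A \ind^K_{MT'_0} T^*_1$, and finally apply the strengthened inductive hypothesis to $T^*_1$ with base $MT'_0$ and parameter $A$ to obtain $T'_1 \equiv_{MT'_0} T^*_1$ with $A \ind^K_{MT'_0} T'_1$. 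This yields $(T'_0, T'_1) \equiv_M (T_0, T_1)$ as required, and step (iii) then completes the induction.
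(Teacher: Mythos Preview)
Your inductive strategy is broadly right, but the way you handle the two subtrees is unnecessarily complicated and the proposed fix has a genuine gap. The paper avoids your joint-type obstacle entirely by reversing the roles in the amalgamation: rather than building $T'_0$ and $T'_1$ sequentially, it applies the inductive hypothesis to each subtree $c_{\unrhd\langle i\rangle}$ with parameter $A$, transports the resulting copies of $A$ back over the \emph{original} subtrees via automorphisms to get $A_0 \ind^K_M c_{\unrhd\langle 0\rangle}$ and $A_1 \ind^K_M c_{\unrhd\langle 1\rangle}$ with $A_0 \equiv_M A \equiv_M A_1$, and then amalgamates $A_0, A_1$ over the pair $c_{\unrhd\langle 0\rangle}, c_{\unrhd\langle 1\rangle}$, which are already Kim-independent by genericity. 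This produces $A' \ind^K_M c_{\rhd\langle\rangle}$ with the correct relations to both subtrees; a single automorphism $A' \mapsto A$ then carries $c_{\rhd\langle\rangle}$ to some $c'_{\rhd\langle\rangle}$, and the joint type $c'_{\rhd\langle\rangle} \equiv_M c_{\rhd\langle\rangle}$ is automatic. The root is then handled by one more application of the independence theorem, amalgamating a realization of $p_0|_{MA}$ with a realization of $p_1|_{Mc'_{\rhd\langle\rangle}}$ over $A \ind^K_M c'_{\rhd\langle\rangle}$.

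Your strengthened inductive hypothesis, by contrast, asks for Kim-independence over the base $MD$, which need not be a model; Kim-independence over arbitrary sets in $\mathrm{NSOP}_1$ theories is delicate (see the references to \cite{DKR22}, \cite{CKR20} in the paper), and the usual tools (Kim's lemma, symmetry, independence theorem) are not available over such bases without additional work. There are also smaller issues: in step (ii), getting $A \ind^K_M T'_0 T'_1$ from $A \ind^K_M T'_0$ and $AT'_0 \ind^K_M T'_1$ requires transitivity, not monotonicity; and in step (iii), the independence theorem only gives $c'_\emptyset \models p_1|_{MT'_0}$ and $c'_\emptyset \models p_1|_{MT'_1}$ separately, not $c'_\emptyset \models p_1|_{MT'_0 T'_1}$, so neither your invocation of the unnumbered claim (which needs $\mathrm{tp}(c'_\emptyset/MAT'_0T'_1)$ to extend to an invariant type) nor the joint type of the full tree is justified as written. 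The paper's swap of roles in the amalgamation sidesteps all of this.
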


    \begin{proof}
        By induction on $n$, we may assume this is true for $\{c_{\eta}\}_{\eta \unrhd \langle 0 \rangle}$ and $\{c_{\eta}\}_{\eta \unrhd \langle 1 \rangle}$. Namely, find $\{c''_{\eta}\}_{\eta \unrhd \langle 0 \rangle} \equiv_{M}\{c_{\eta}\}_{\eta \unrhd \langle 0 \rangle}$  with $c''_{\eta} \models p_{0}(x)|_{MA}$ for each $\langle 0 \rangle \unlhd \eta \in 2^{\leq n}$ and $A \ind^{K}_{M} \{c''_{\eta}\}_{\eta \unrhd \langle 0 \rangle}$, and similarly, $\{c''_{\eta}\}_{\eta \unrhd \langle 1 \rangle} \equiv_{M}\{c_{\eta}\}_{\eta \unrhd \langle 1 \rangle}$  with $c''_{\eta} \models p_{1}(x)|_{MA}$ for each $\langle 1 \rangle \unlhd \eta \in 2^{\leq n}$ and $A \ind^{K}_{M} \{c''_{\eta}\}_{\eta \unrhd \langle 1 \rangle}$. Recall that $c_{\unrhd  \langle 0 \rangle } \ind^{K}_{M} c_{\unrhd \langle 1 \rangle }$ as $\{c_{\eta}\}_{\eta \in 2^{\leq n}}$ is a generic tree, so by the independence theorem and an automorphism, we can find $\{c'_{\eta}\}_{\eta \rhd  \langle \rangle } \equiv_{M}\{c_{\eta}\}_{\eta \rhd \langle \rangle}$ with $c'_{\eta} \models p_{0}(x)|_{MA}$ for each $\langle  \rangle \lhd \eta \in 2^{\leq n}$ and $A \ind^{K}_{M} \{c'_{\eta}\}_{\eta \rhd \langle  \rangle}$. Finally, by the independence theorem and an automorphism, find $c'_{\langle \rangle} \models p_{0} (x)|_{MA} \cup p_{1}(x)|_{M \{c'_{\eta}\}_{\eta \rhd  \langle \rangle }}$ such that $A \ind_{M}^{K} \{c'_{\eta}\}_{\eta \in 2^{\leq n}}$, as desired.
    \end{proof}

By induction, we construct a generic tree $\{b_{\eta}\}_{\eta \in 2^{<\omega}}$ such that the paths, read downward, are Morley sequences in $p_{1}$, and the descending combs are Morley sequences in $p_{0}$. Suppose we have constructed $I=\{b_{\eta}\}_{\eta \in 2^{\leq n}}$ with these properties. By Claim \ref{4-gtree}, we can find $I^{1} \equiv_{M} I \equiv_{M} I^{2}$ with $I^{1} \ind^{K}_{M} I^{2}$ and for $I^{1}=\{b^{1}_{\eta}\}_{\eta \in 2^{\leq n}}$, $I^{2}=\{b^{2}_{\eta}\}_{\eta \in 2^{\leq n}}$, $b^{2}_{\eta} \models p_{0}(x)|_{MI_{1}}$ for $\eta \in 2^{\leq n}$. The trees $I_{1}$ and $I_{2}$ of height $n$ will be the subtrees of our new generic tree of height $n+1$. Finally, let $b_{*} \models q_{1}(x)|_{MI_{1}I_{2}}$ be the new root. Reindexing accordingly, we get a generic tree $\{b_{\eta}\}_{\eta \in 2^{\leq n+1}}$ such that the paths, read downward, are Morley sequences in $p_{1}$, and the descending combs are Morley sequences in $p_{0}$. This completes the induction.

Now the compatible order property comes from the proof of Theorem \ref{3-main3}.

\end{proof}

\begin{example}
    If $R_{\infty}$ is the model companion of the empty theory in the language of hypergraph sequences (or, say, the theory axiomatizing the basic properties of characteristic sequences; see \cite{Mal10}, Observation 2.4), then it is a simple structure. But it has $\mathrm{MDCTP}_{2}$ and the compatible order property.
\end{example} 

We connect Proposition \ref{4-hyp} to the internal properties of types. We recall the definition of co-simplicity from \cite{Che14}:

\begin{definition}
    A type $p(x)$ over $A$ is co-simple if there is no formula $\varphi(x, y)$, $k \geq 2$ and parameters $\{b_{\eta}\}_{\eta \in \omega^{< \omega}}$, $b_{\eta} \subseteq p(\mathbb{M})$ such that for each path $\sigma \in \omega^{\omega}$, $\{\varphi(x, b_{\sigma|_{n} })\}_{n \in \omega}$ is consistent, but for each node $\eta \in \omega^{< \omega}$, $\{\varphi(x, b_{\eta \smallfrown \langle n \rangle })\}_{n \in \omega}$ is $k$-inconsistent.
\end{definition}

\begin{corollary}
(1) (\cite{Che14}, Theorem 6.17) In a $\mathrm{NTP}_{2}$ theory, internally simple types are co-simple. 

(2) In an $\mathrm{NDCTP}_{2}$ theory or an $\mathrm{NSOP}_{3}$ theory, internally $\mathrm{NSOP}_{1}$ types are co-$\mathrm{NSOP}_{1}$
\end{corollary}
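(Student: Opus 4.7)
The plan is to read both statements as direct applications of Proposition \ref{4-hyp} to the restriction of the characteristic sequence of a witnessing formula to $p(\mathbb{M})$, interpreted inside $\mathcal{M}_p$. The translation between model-theoretic properties of $T$ and of $\mathcal{M}_p$ is exactly the content of the proof of Theorem \ref{3-main3}, which already handles the main case of (2) (the $\mathrm{NSOP}_3$ part). The remaining cases will use the same framework, just invoking the other clauses of Proposition \ref{4-hyp}.

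First, for each witnessing formula $\varphi(x,y)\in L(M)$, define the relations $R_n(y_1,\dots,y_n) := R_{\exists x\,\varphi(x,y_1)\wedge\dots\wedge\varphi(x,y_n)}$ in $\mathcal{L}_p$, so that $R_\infty=(p(\mathbb{M}),\{R_n\}_{n<\omega})$ is a quantifier-free definable reduct of $\mathcal{M}_p$. Consistency of $\{\varphi(x,b_i)\}_i$ with $b_i\subseteq p(\mathbb{M})$ becomes a clique in $R_\infty$, and $k$-inconsistency becomes a $k$-anticlique. Under this correspondence: $p(x)$ failing to be co-simple gives an $\mathrm{MTP}$ configuration in $R_\infty$ (this is the content of the parameters $\{b_\eta\}_{\eta\in\omega^{<\omega}}$ in the definition of co-simplicity); $p(x)$ failing to be co-$\mathrm{NSOP}_1$, via Definition \ref{4-consop1}(1), gives an $\mathrm{MSOP}_1$ configuration in $R_\infty$.

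Second, assume $p(x)$ is internally $\mathcal{P}$ for $\mathcal{P}\in\{\text{simple},\mathrm{NSOP}_1\}$. Then the whole structure $\mathcal{M}_p$ (and hence its reduct $R_\infty$) is $\mathcal{P}$; in fact, since the $R_n$ are quantifier-free in $\mathcal{L}_p$, only the quantifier-free version is needed, which is exactly what Proposition \ref{4-hyp} assumes. Applying Proposition \ref{4-hyp}(1) in the simple case produces an $(\omega,\omega,1)$-array in $R_\infty$; applying Proposition \ref{4-hyp}(2) in the $\mathrm{NSOP}_1$ case produces both $\omega$-$\mathrm{MDCTP}_2$ and the compatible order property in $R_\infty$.

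Finally, push these graph-theoretic configurations on $R_\infty$ back to consistency patterns for $\varphi(x,y)$ in $T$: an $(\omega,\omega,1)$-array in the characteristic sequence witnesses $\mathrm{TP}_2$ of $\varphi(x,y)$ (Claim 3.8 of \cite{Mal10}), $\omega$-$\mathrm{MDCTP}_2$ witnesses $\omega$-$\mathrm{DCTP}_2$, and the compatible order property witnesses $\mathrm{SOP}_3$ via Fact \ref{4-sop3fact} (this is the final translation carried out at the end of the proof of Theorem \ref{3-main3}). In the $\mathrm{NTP}_2$ case this contradicts the hypothesis, giving (1); in the $\mathrm{N}$-$\omega$-$\mathrm{DCTP}_2$ case the first alternative contradicts the hypothesis, and in the $\mathrm{NSOP}_3$ case the second alternative does, giving the two subcases of (2). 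The main (and only nontrivial) obstacle is the bookkeeping ensuring that the parameters produced inside $\mathcal{M}_p$ really are realizations of $p(x)$ and that the translation of hypergraph edges into instances of $\exists x\,\bigwedge_i\varphi(x,y_i)$ is preserved under extraction of indiscernibles; this is exactly what was done carefully in the proof of Theorem \ref{3-main3} and transfers verbatim.
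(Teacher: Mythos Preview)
Your proposal is correct and follows essentially the same approach as the paper: translate a failure of co-simplicity (resp.\ co-$\mathrm{NSOP}_1$) into an $\mathrm{MTP}$ (resp.\ $\mathrm{MSOP}_1$) configuration in the restricted characteristic sequence $R_\infty$, use internal simplicity/$\mathrm{NSOP}_1$ of $p$ to apply Proposition~\ref{4-hyp}, and read off $\mathrm{TP}_2$ (resp.\ $\omega$-$\mathrm{DCTP}_2$ and $\mathrm{SOP}_3$) in $T$. Your final paragraph about bookkeeping is a bit overcautious---once the configuration is produced in $R_\infty$, it lives in $p(\mathbb{M})\subseteq\mathbb{M}$ automatically, so no additional tracking is needed---but this does not affect correctness.
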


\begin{proof}
    
    (1). If $p(x)$ is not co-simple then the restriction $R_{\infty}$ of some characteristic sequence to $p(\mathbb{M})$ has $\mathrm{MTP}$. If $p(x)$ is internally $\mathrm{NSOP}_{1}$, then $R_{\infty}$ is $simple$, so by the previous proposition it has an $(\infty, \infty, 1)$-array. So $T$ must have $\mathrm{TP}_{2}$.

    (2). If $p(x)$ is not co-$\mathrm{NSOP}_{1}$ then the restriction $R_{\infty}$ of some characteristic sequence to $p(\mathbb{M})$ has $\mathrm{MSOP}_{1}$. If $p(x)$ is internally $\mathrm{NSOP}_{1}$, then $R_{\infty}$ is $\mathrm{NSOP}_{1}$, so by the previous proposition it has $\mathrm{MDCTP}_{2}$ and the compatible order property. So $T$ must have $\mathrm{DCTP}_{2}$ and $\mathrm{SOP}_{3}$.
\end{proof}

In other words, the fact that internally $\mathrm{NSOP}_{1}$ types are co-$\mathrm{NSOP}_{1}$ in $\mathrm{NSOP}_{3}$ theories can be interpreted as saying that in an $\mathrm{NSOP}_{3}$ theory, the graph-theoretic complexity of a characteristic sequence must be reflected in its model-theoretic complexity in the hypergraph language.

\begin{remark}
    For $R_{\infty}$ a hypergraph sequence, define $R^{(m)}_{\infty}$ to be the hypergraph sequence whose vertices are $m$-tuples of vertices of $R_{\infty}$, and define $R_{n}^{(m)}((a^{1}_{1}, \ldots a^{m}_{1}), \ldots (a^{1}_{n}, \ldots a^{m}_{m})) =: R_{mn}(a^{1}_{1}, \ldots a^{m}_{1}, \ldots a^{1}_{n}, \ldots a^{m}_{m})$. For example, if $R_{\infty}$ is the characteristic sequence of $\varphi(x, y)$, then $R^{(m)}_{\infty}$ is the characteristic sequence of $\wedge_{i=1}^{m} \varphi(x, y_{i})$. If we consider hypergraphs up to the concatenation operation $R_{\infty} \mapsto R^{(m)}_{\infty}$, then we obtain additional information. For example, we can define $\mathrm{MSOP}_{2}=\mathrm{MTP}_{1}$ to mean that there exists a binary (or infinitely branching; see \cite{Adl07}, recounted in Fact 4.2 of \cite{CR15}) tree whose paths are cliques and whose incomparable pairs are $2$-anticliques. It follows from the proof of \cite{Sh90}, III.7.7, III.7.11 that up to concatenation, a hypergraph sequence with $\mathrm{MTP}$ either has an $(\omega, \omega, 1)$-array or has $\mathrm{MTP}_{1}$. So by Proposition \ref{4-hyp}.2, if $R_{\infty}$ is $\mathrm{NSOP}_{1}$, and has $\mathrm{MTP}$, then up to concatenation it either has $\mathrm{MTP}$ or an $(\omega, \omega, 1)$-array.

    It is also worth noting that if we define $\mathrm{MATP}$ to be the existence of a tree such that the antichains are cliques and the paths are $k$-anticliques, it follows from the proof of Theorem 4.8 of \cite{AK20} that up to concatenation, an $\mathrm{MSOP}_{1}$ hypergraph sequence has either $\mathrm{MSOP}_{2}$ or $\mathrm{MATP}$.

    Although it is shown in \cite{NSOP2} that $\mathrm{NSOP}_{1}$ coincides with $\mathrm{NSOP}_{2}$ for theories, the following informal question (which was asked by Byunghan Kim at the 2023 BIRS meeting on neostability theory) remains of interest: if a formula has $\mathrm{SOP}_{1}$, can it be shown that a related formula has $\mathrm{SOP}_{2}$?   There are $\mathrm{SOP}_{1}$ formulas $\varphi(x, y)$ such that $\wedge_{i =1}^{n} \varphi(x, y_{i})$ is $\mathrm{NSOP}_{2}$; see \cite{AK20}, §6. But we could ask, say, whether if a formula has $\mathrm{SOP}_{1}$, $\mathrm{SOP}_{2}$ must appear in the quantifier-free formulas of its characteristic sequence. At the graph-theoretic level, we may try to show that if we assume only quantifier-free formulas of a hypergraph sequence $R_{\infty}$ to be $\mathrm{NSOP}_{2}$, then if $R_{\infty}$ has $\mathrm{MSOP}_{1}$, it has $\mathrm{MSOP}_{2}$ up to concatenation. The following argument, applying the arguments in arguments in \cite{NSOP2}, appears initially to work, but fails at an important step.

    If $R_{\infty}$ has $\mathrm{MSOP}_{1}$, there is a model $M$ and two $M$-finitely satisfiable Morley sequences, one of which is a clique and one of which is a $2$-anticlique. (That $k$ may chosen to be $2$ comes from \cite{Lee22}, \cite{KR17}). Now Lemma 4.5 of \cite{NSOP2} says that for any coheir $p(x)$ over $M$ and canonical coheir $q(x)$ over $M$, there is a tree whose paths are Morley sequences in $p(x)$ and whose descending combs are Morley sequences in $q(x)$. So if Morley sequences in $q(x)$ are anticliques and Morley sequences in $p(x)$ are cliques, the descending combs will be anticliques and the paths will be cliques. By the proof of Lemma 2.8 of \cite{NSOP2} ($\mathrm{SOP}_{2}$ = k-$\mathrm{DCTP}_{1}$), such a tree, up to concatenation, gives an instance of $\mathrm{MSOP}_{2}$.

    So because there exists a finitely satisfiable Morley sequence that is a clique, either $R_{\infty}$ has $\mathrm{MSOP}_{2}$ and we are done, or there is also a \textit{canonical} Morley sequence that is a clique. At this point, now that we have an $M$-finitely satisfiable Morley sequence that is a $2$-anticlique and a canonical Morley sequence that is a clique, we can prove Kim's lemma for canonical Morley sequences, symmetry for Conant-independence, and the weak independence theorem for Conant-independence, all in the quantifier-free context as for $\mathrm{NSOP}_{2}$ theories.

    The argument is valid up to this point. But then, we may try to use the technique of Conant (\cite{Co15}) developed in Section 6 of \cite{NSOP2} to show the compatible order property; up to concatenation, the compatible order property would imply $\mathrm{MSOP}_{2}$ (\cite{Mal10}, Observation 3.11). This strategy fails for this reason: in the notation of Section 6 of \cite{NSOP2}, after fixing the strong canonical coheir $q(x)$, we want to preserve at each stage the property (property 1) that the sequence beginning with $b_{i}^{2}$ for $i < n$, and then continuing with $b_{i}^{1}$ for $i \geq n$, is a strong canonical Morley sequence in $q(x)$. However, since the weak independence theorem for Conant-independence only applies in the quantifier-free context in $R_{\infty}$, we do not actually preserve this property when applying the weak independence theorem.

\end{remark}

\section{Independence of internally $\mathrm{NSOP}_{1}$ types in $\mathrm{NSOP}_{3}$ theories}

In this section, we prove an extension of the independence theorem of Kaplan and Ramsey (\cite{KR17}) to internally $\mathrm{NSOP}_{1}$ types in $\mathrm{NSOP}_{3}$ theories. We will use Theorem \ref{3-main3}, namely that internally $\mathrm{NSOP}_{1}$ types in $\mathrm{NSOP}_{3}$ theories are co-$\mathrm{NSOP}_{1}$.

While the theorem does not give $a_{1}'' \ind_{M}^{K^{*}} a_{2}a_{3}$, $a''_{1}$ can be chosen so that any two of $a''_{1}, a_{2}, a_{3}$ is Conant-independent from the third, somewhat similarly to Theorem 2.13 of \cite{KR18}. 

\begin{theorem} \label{4-main4}
    Let $T$ be $\mathrm{NSOP}_{3}$, and let $p_{1}, p_{2}, p_{3}$ be internally $\mathrm{NSOP}_{1}$ types over $M$. Let $a_{1} \equiv_{M} a'_{1} \subset p_{1}(\mathbb{M})$, $ a_{2} \subset p_{2}(\mathbb{M})$,  $ a_{3} \subset p_{3}(\mathbb{M})$.  If $a_{1} \ind^{K^{*}}_{M} a_{2}$, $a'_{1} \ind^{K^{*}}_{M} a_{3}$, $a_{2} \ind^{K^{*}}_{M} a_{3}$, there is some $a''_{1}$ with $a''_{1} \models \mathrm{tp}(a_{1}/Ma_{2}) \cup \mathrm{tp}(a'_{1}/Ma_{3})$. Moreover, $a''_{1}$ can be chosen with $a_{2}a_{3} \ind^{K^{*}}_{M}a''_{1}$, $a_{2}a''_{1} \ind^{K^{*}}_{M} a_{3}$ and $a_{3}a''_{1} \ind^{K^{*}}_{M} a_{2}$.
\end{theorem}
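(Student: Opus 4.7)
The plan is to adapt the Kaplan--Ramsey proof of the NSOP$_1$ independence theorem (Fact \ref{4-independence}) to the present setting, replacing each use of a global NSOP$_1$ hypothesis with a local appeal to Theorem \ref{3-main3}. By Theorem \ref{3-main3}, each of $p_1, p_2, p_3$ is co-NSOP$_1$. Via the Kim's lemma characterization in Definition \ref{4-consop1}(3), a formula $\varphi(x, b)$ with $b \subset p_i(\mathbb{M})$ Kim-divides over $M$ iff every invariant Morley sequence starting at $b$ witnesses this; so for such $b$, Conant-dividing coincides with Kim-dividing, and the given $\ind^{K^{*}}$-hypotheses become $\ind^{Kd}$-hypotheses. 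By Proposition \ref{4-symm3}, $\ind^{Kd}$ is symmetric over $M$ whenever either side lies in $p_i(\mathbb{M})$, yielding $a_2 \ind^{Kd}_{M} a_1$, $a_3 \ind^{Kd}_{M} a_1'$, and $a_3 \ind^{Kd}_{M} a_2$. The right-extension and chain-condition claims for $\ind^{K^{+}u}$ (Claims \ref{4-rext} and \ref{4-cc3}) carry through on co-NSOP$_1$ parameters, since their proofs only invoke coheir Morley sequences together with co-NSOP$_1$ Kim's lemma.

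The main construction would be a zigzag amalgamation. First I would build a coheir Morley sequence $J = (a_2^i a_3^i)_{i<\omega}$ in a tensor product of coheirs of $\mathrm{tp}(a_2/M)$ and $\mathrm{tp}(a_3/M)$, arranging by the chain condition (applied using $a_3 \ind^{Kd}_{M} a_2$) that $(a_2^0, a_3^0) = (a_2, a_3)$ and $a_2^i a_3^i$ is indiscernible as an invariant sequence over $M$. Since $a_1 \ind^{Kd}_{M} a_2$ and $p_2$ is co-NSOP$_1$, Kim's lemma for $p_2$ gives that $\{\mathrm{tp}(a_1/Ma_2^i)\}_{i<\omega}$ is consistent, realized by some $\tilde a_1 \equiv_{M} a_1$; symmetrically one obtains $\tilde a_1'$ for $a_1'$ and $(a_3^i)$. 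The two partial realizations are amalgamated by iterating the Kaplan--Ramsey tree construction \emph{internal to} the co-NSOP$_1$ type $p_1$: at each step one uses right extension and the chain condition for $\ind^{K^{+}u}$ (valid by Proposition \ref{4-symm3} because the parameters we need to move past lie in the co-NSOP$_1$ types $p_2, p_3$) to align a coheir Morley sequence of the partial candidate so that extending it further is consistent with both $\mathrm{tp}(a_1/Ma_2)$ and $\mathrm{tp}(a_1'/Ma_3)$, producing $a_1''$ in the limit.

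The hard part is the ``moreover'' clause. In the NSOP$_1$ setting, the one-sided independence $a \ind^{K}_M b_1 b_2$ is free because $\ind^{K}$ is symmetric; here $\ind^{K^{*}}$ need not be symmetric in the ambient theory, so each of $a_2a_3 \ind^{K^{*}}_M a_1''$, $a_2 a_1'' \ind^{K^{*}}_M a_3$, $a_3 a_1'' \ind^{K^{*}}_M a_2$ must be arranged separately. I would handle this by performing the construction so that $a_1''$ is obtained as a realization over a sufficiently generic coheir Morley sequence $J$ (in both coordinates) extended indiscernibly: if $\varphi(x, a_2 a_3)$ Conant-forked over $M$ and held of $a_1''$, then by moving along $J$ one would extract a pattern contradicting one of the three co-NSOP$_1$ Kim's-lemma statements (applied to $p_1$, $p_2$, or $p_3$ depending on which side we symmetrize), since after appropriate reindexing the failure witnesses $k$-inconsistency along a finitely satisfiable Morley sequence that ought to be consistent by Kim's lemma. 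The delicate bookkeeping needed to ensure that a single $a_1''$ can simultaneously satisfy all three Conant-independence constraints---exploiting all three co-NSOP$_1$ hypotheses in parallel---is where NSOP$_3$ is genuinely used a second time, beyond its role in Theorem \ref{3-main3}.
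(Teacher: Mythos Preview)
Your proposal has the architecture inverted. In the paper, the ``moreover'' clause is the \emph{easy} part: once consistency of $\mathrm{tp}(a_{1}/Ma_{2}) \cup \mathrm{tp}(a'_{1}/Ma_{3})$ is known, one replaces each $a_{i}$ by a coheir Morley sequence (using the chain condition for co-$\mathrm{NSOP}_{1}$ types), applies consistency to the sequences, and extracts mutually indiscernible sequences via Fact~\ref{4-indisc4}; the three Conant-independence conclusions then drop out from mutual indiscernibility with no further use of $\mathrm{NSOP}_{3}$. The genuine second use of $\mathrm{NSOP}_{3}$ is in the \emph{consistency} step, which you treat as routine.

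Your zigzag sketch does not explain how to amalgamate across three distinct co-$\mathrm{NSOP}_{1}$ types, and Example~\ref{4-example3} (actually, the tripartite example at the start of Section~4) shows this cannot be done with co-$\mathrm{NSOP}_{1}$ alone: there the $p_{i}$ are internally stable, all pairwise Conant-independences hold, yet $\mathrm{tp}(a_{1}/Ma_{2}) \cup \mathrm{tp}(a'_{1}/Ma_{3})$ is inconsistent. The paper's argument is different in kind: after establishing a two-type independence theorem (Claim~\ref{4-ind4}), it introduces \emph{generic triples} $(A,B,C)$ witnessed by mutually indiscernible invariant Morley sequences, proves an extension lemma (Claim~\ref{4-theclaim4}) letting one enlarge one coordinate while preserving genericity, and uses this to build infinite chains for the partial type $\Phi(x^{1},y^{1},z^{1};x^{2},y^{2},z^{2}) = q_{2}(x^{2},z^{1}) \cup q_{3}(x^{1},y^{2}) \cup q_{1}(y^{1},z^{2})$. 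Then $\mathrm{NSOP}_{3}$ forces a $3$-cycle in $\Phi$, and reading off the diagonal of that cycle yields a realization of $q_{1} \cup q_{2} \cup q_{3}$. This Evans--Wong $3$-cycle trick is the missing idea in your proposal; nothing in a Kaplan--Ramsey-style tree or zigzag construction supplies it.
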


\begin{proof}
    
We begin with some observations on co-$\mathrm{NSOP}_{1}$ types. First of all, Conant-independence between co-$\mathrm{NSOP}_{1}$ types is just Kim-dividing independence.

\begin{claim}\label{4-kd4}
If  $b \subset p(\mathbb{M})$ for $p(x)$ a co-$\mathrm{NSOP}_{1}$ type over $M$, then $a \ind^{K^{*}}_{M} b $ if and only if $a \ind^{Kd}_{M} b $.   
\end{claim}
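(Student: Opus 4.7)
The plan is to show that, for formulas $\varphi(x, b)$ whose parameter $b$ lies in $p(\mathbb{M})$, the three notions of Kim-dividing, Conant-dividing, and Conant-forking over $M$ all coincide. Since every formula in $\mathrm{tp}(a/Mb)$ has the form $\varphi(x, b)$ with $\varphi \in L(M)$ and $b \subseteq p(\mathbb{M})$, the equivalence $a \ind^{K^{*}}_{M} b \Leftrightarrow a \ind^{Kd}_{M} b$ follows at once from the coincidence of Kim-dividing and Conant-forking on such formulas.

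One direction is handled directly by Kim's lemma for co-$\mathrm{NSOP}_{1}$ types, namely condition (3) of Definition \ref{4-consop1}: any Kim-dividing $\varphi(x, b)$ with $b \subseteq p(\mathbb{M})$ has every $M$-invariant Morley sequence starting with $b$ witnessing its inconsistency, so $\varphi(x, b)$ already Conant-divides, and hence Conant-forks. Taking the contrapositive, this yields $a \ind^{K^{*}}_{M} b \Rightarrow a \ind^{Kd}_{M} b$.

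For the reverse direction, I would prove the general statement (not needing co-$\mathrm{NSOP}_{1}$) that Conant-forking always implies Kim-dividing. Given $\varphi(x, b) \vdash \bigvee_{i = 1}^{n} \psi_{i}(x, c_{i})$ with each $\psi_{i}(x, c_{i})$ Conant-dividing over $M$, extend $\mathrm{tp}(b, c_{1}, \ldots, c_{n}/M)$ to an $M$-invariant global type $r$ and take a Morley sequence $\{(b_{j}, c_{1}^{j}, \ldots, c_{n}^{j})\}_{j < \omega}$ in $r$ over $M$ starting with $(b, c_{1}, \ldots, c_{n})$. Each coordinatewise projection $\{c_{i}^{j}\}_{j}$ is an $M$-invariant Morley sequence starting with $c_{i}$, so Conant-dividing of $\psi_{i}$ forces $\{\psi_{i}(x, c_{i}^{j})\}_{j}$ to be $k_{i}$-inconsistent for some $k_{i}$. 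By $M$-invariance, for every $j$ the entailment $\varphi(x, b_{j}) \vdash \bigvee_{i} \psi_{i}(x, c_{i}^{j})$ persists; a pigeonhole across the $n$ disjuncts applied to any putative common realization of $\{\varphi(x, b_{j})\}_{j<\omega}$ contradicts $k_{i_{0}}$-inconsistency of some $\{\psi_{i_{0}}(x, c_{i_{0}}^{j})\}_{j}$. Thus $\varphi(x, b)$ Kim-divides, witnessed by $\{b_{j}\}$.

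I do not anticipate a serious obstacle here: the key point for the reverse direction is that Conant-dividing is the ``for every invariant Morley sequence'' version, so it reacts correctly to the Morley sequences obtained by projecting from any joint $M$-invariant extension, which is exactly what powers the pigeonhole step. Combining the two implications gives Kim-dividing $\Leftrightarrow$ Conant-dividing $\Leftrightarrow$ Conant-forking for formulas with parameter in $p(\mathbb{M})$, and thus the claim.
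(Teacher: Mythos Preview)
Your proof is correct. The forward direction ($\ind^{K^{*}} \Rightarrow \ind^{Kd}$) is handled exactly as the paper does it, via Kim's lemma (condition (3) of Definition \ref{4-consop1}).

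For the reverse direction you take a slightly different route. The paper argues: from $a \ind^{Kd}_{M} b$, use compactness to find an $M$-finitely satisfiable Morley sequence starting with $b$ that is indiscernible over $Ma$, and then invoke the cited Fact \ref{4-conantforkdivide} (Fact 6.1 of \cite{GFA}), which says that a formula with a coheir Morley sequence of consistent instances cannot Conant-fork. You instead prove directly the general implication ``Conant-forking over $M$ implies Kim-dividing over $M$'' by extending $\mathrm{tp}(b, c_{1}, \ldots, c_{n}/M)$ to a global $M$-invariant type, projecting the resulting Morley sequence, and running pigeonhole against the disjuncts. This is essentially what underlies the proof of Fact \ref{4-conantforkdivide} (where coheirs are used because $\ind^{u}$ has left extension), but your version is self-contained and avoids the detour through an indiscernible-over-$Ma$ sequence and an external citation. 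Both arguments use co-$\mathrm{NSOP}_{1}$ only for the forward direction.
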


\begin{proof}
   If $a \ind^{K^{*}}_{M} b $, then $a \ind^{Kd}_{M} b $ by Kim's lemma, Definition 3.1.3. Conversely, if $a \ind^{Kd}_{M} b $, then in particular, by compactness we can choose an $M$-finitely satisfiable Morley sequence $\{b_{i}\}_{i < \omega}$, $b_{0}=b$, that is indiscernible over $Ma$. But then, by Fact \ref{4-conantforkdivide} below, formulas that do not Kim-divide over $M$ by a some $M$-finitely satisfiable Morley sequence do not Conant-fork over $M$, so $a \ind^{K^{*}}_{M} b $. 
\end{proof}

\begin{claim}\label{4-symm4}
If $p(x), q(x)$ are co-$\mathrm{NSOP}_{1}$ types over $M$ and $a \subseteq p(\mathbb{M})$, $b \subseteq q(\mathbb{M})$, then $a\ind^{K^{*}}_{M}b$ if and only if $b\ind^{K^{*}}_{M}a$.    
\end{claim}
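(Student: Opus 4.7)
The plan is to chain together the two results already established for co-$\mathrm{NSOP}_{1}$ types: Claim \ref{4-kd4}, which says that $\ind^{K^{*}}$ and $\ind^{Kd}$ coincide whenever the right-hand side lies in a co-$\mathrm{NSOP}_{1}$ type, and Proposition \ref{4-symm3}, which gives symmetry of $\ind^{Kd}$ whenever the left-hand side lies in a co-$\mathrm{NSOP}_{1}$ type. Since in our hypothesis both $a \subseteq p(\mathbb{M})$ and $b \subseteq q(\mathbb{M})$ lie in co-$\mathrm{NSOP}_{1}$ types, we can freely translate $\ind^{K^{*}}$ into $\ind^{Kd}$ (using whichever side is convenient), apply symmetry of $\ind^{Kd}$, and then translate back.

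More concretely, suppose $a \ind^{K^{*}}_{M} b$. Because $b \subseteq q(\mathbb{M})$ with $q$ co-$\mathrm{NSOP}_{1}$, Claim \ref{4-kd4} yields $a \ind^{Kd}_{M} b$. Now $a \subseteq p(\mathbb{M})$ with $p$ co-$\mathrm{NSOP}_{1}$, so Proposition \ref{4-symm3} gives $b \ind^{Kd}_{M} a$. Finally, the right-hand side of this relation is $a \subseteq p(\mathbb{M})$, so a second application of Claim \ref{4-kd4} gives $b \ind^{K^{*}}_{M} a$. The converse implication is obtained by swapping the roles of $p$ with $q$ and $a$ with $b$ in the same argument. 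There is no real obstacle, since the symmetry is already captured by Proposition \ref{4-symm3}; the point of the claim is only to observe that on realizations of co-$\mathrm{NSOP}_{1}$ types, the asymmetry in the definitions of $\ind^{K^{*}}$ and $\ind^{Kd}$ disappears.
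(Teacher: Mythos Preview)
Your proof is correct and is essentially the same as the paper's, just more explicit: the paper simply writes ``This follows from Proposition \ref{4-symm3},'' leaving implicit the use of Claim \ref{4-kd4} to pass between $\ind^{K^{*}}$ and $\ind^{Kd}$, which you have spelled out carefully.
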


This follows from Proposition 3.1.

\begin{claim}\label{4-cc4}
If  $b \subset p(\mathbb{M})$ for $p(x)$ a co-$\mathrm{NSOP}_{1}$ type over $M$, and $a \ind^{K^{*}}_{M} b $, then for any $M$-finitely satisfiable type $q(x)$, there is a Morley sequence $I=\{b_{i}\}_{i < \omega}$ in $q(x)$, $b_{0} =b$, that is indiscernible over $Ma$, and any such Morley sequence will satisfy $a \ind^{K^{*}}_{M} I$. 
\end{claim}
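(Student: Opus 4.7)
The plan is to split the claim into two pieces: first, the existence of a Morley sequence $I$ in $q$ starting with $b$ and indiscernible over $Ma$; second, the conclusion $a \ind^{K^{*}}_M I$ for any such $I$.

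For existence, I would reduce via Claim~\ref{4-kd4} to $a \ind^{Kd}_M b$ and then apply Kim's lemma for Kim-dividing at co-$\mathrm{NSOP}_{1}$ types (Definition~\ref{4-consop1}(3)). Take a long Morley sequence $(b_i)_{i<\kappa}$ in $q$ with $b_0=b$. By Kim's lemma, no formula in $\mathrm{tp}(a/Mb)$ becomes inconsistent along this sequence, so compactness yields $a^{*}$ with $(a^{*}, b_i) \equiv_M (a, b)$ for every $i$. An Erd\H{o}s--Rado extraction over $Ma^{*}$ then produces an $Ma^{*}$-indiscernible sequence $(c_i)_{i<\omega}$ whose EM-type over $M$ matches that of $(b_i)$ and which still satisfies $(a^{*}, c_i) \equiv_M (a, b)$ for every $i$. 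Because $q$ is $M$-invariant, this EM-type equality pushes the Morley-in-$q$ condition $\mathrm{tp}(b_i/Mb_{<i})=q|_{Mb_{<i}}$ forward to $\mathrm{tp}(c_i/Mc_{<i})=q|_{Mc_{<i}}$, so $(c_i)$ is itself Morley in $q$. Two automorphism transports then deliver $(c_i)$ in the desired form: an $Ma^{*}$-automorphism sending $c_0$ to $b$ (available since $c_0 \equiv_{Ma^{*}} b$), followed by an $Mb$-automorphism sending $a^{*}$ to $a$ (available since $a^{*}\equiv_{Mb} a$), yields the desired Morley sequence $I=(b'_i)$ in $q$ with $b'_0 = b$, indiscernible over $Ma$.

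For the Conant-independence, given any such $I$, Claim~\ref{4-cc3} produces an $M$-finitely satisfiable Morley sequence $(I_j)_{j<\omega}$ with $I_0=I$ that is indiscernible over $Ma$. For each $\varphi(x,I)\in\mathrm{tp}(a/MI)$, this gives $\models\varphi(a,I_j)$ for all $j$, so $\{\varphi(x,I_j)\}_{j<\omega}$ is consistent (realized by $a$), and $\varphi(x,I)$ does not Kim-divide over $M$ along the coheir Morley sequence $(I_j)$. Fact~\ref{4-conantforkdivide} then concludes that $\varphi(x,I)$ does not Conant-fork over $M$, giving $a\ind^{K^{*}}_M I$.

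The main obstacle is to simultaneously arrange $b'_0=b$ and indiscernibility over $Ma$ in the first part: a naive Erd\H{o}s--Rado extraction gives indiscernibility over $Ma$ but only finds a base element $M$-equivalent to $b$, while directly fixing $b$ at position zero destroys indiscernibility. Routing through the auxiliary realization $a^{*}$ and composing two automorphisms is the device that reconciles these. Verifying that the extracted sequence remains Morley in the fixed type $q$---not merely Morley in some $M$-invariant type with the same EM-type---is the delicate step, and is where $M$-invariance of $q$ enters crucially.
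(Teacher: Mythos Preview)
Your proof is correct and follows essentially the same approach as the paper's (very terse) proof, which reads in full: ``This is the `chain condition', Claim~\ref{4-cc3}, together with compactness.'' You have unpacked exactly what the paper leaves implicit: the existence part via Claim~\ref{4-kd4}, Kim's lemma (Definition~\ref{4-consop1}(3)), compactness, extraction, and the automorphism transport (including the verification that the extracted sequence remains Morley in the fixed $M$-invariant type $q$); and the Conant-independence part via Claim~\ref{4-cc3} combined with Fact~\ref{4-conantforkdivide}.
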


This is the ``chain condition", Claim \ref{4-cc3}, together with compactness. We use Claims \ref{4-kd4}-\ref{4-cc4} throughout.

Third, we have the weak independence theorem between \textit{two} co-$\mathrm{NSOP}_{1}$ types, analogously to Proposition 6.1 of Kaplan and Ramsey, \cite{KR17}:

\begin{claim}\label{4-wit4}
 Let $p(x), q(x)$ be co-$\mathrm{NSOP}_{1}$ types over $M$, and let $a \equiv_{M} a' \subseteq p(\mathbb{M})$, $b, c \subseteq q(\mathbb{M})$, $a \ind^{K^{*}}_{M} b$, $a' \ind^{K^{*}}_{M} c$, $c \ind^{u}_{M} b$. Then there is $a'' \ind_{M}^{K^{*}} bc$  with $a'' \models \mathrm{tp}(a/Mb) \cup \mathrm{tp}(a'/Mc)$. 
\end{claim}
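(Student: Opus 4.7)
The plan is to adapt Kaplan and Ramsey's proof of the weak independence theorem (Proposition 6.1 of \cite{KR17}) for Kim-independence in $\mathrm{NSOP}_{1}$ theories to the co-$\mathrm{NSOP}_{1}$ setting, using the three co-$\mathrm{NSOP}_{1}$ analogues already established: symmetry (Claim \ref{4-symm4}), the equivalence $\ind^{K^{*}} = \ind^{Kd}$ (Claim \ref{4-kd4}), and the chain condition (Claim \ref{4-cc4}).

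First, using $c \ind^{u}_{M} b$, I would fix a global $M$-finitely satisfiable type $r(y)$ extending $\mathrm{tp}(c/Mb)$, and let $I = (c_{i})_{i<\omega}$ be a Morley sequence in $r$ over $Mb$ starting with $c_{0} = c$, so that $I$ is automatically indiscernible over $Mb$. Applying the chain condition (Claim \ref{4-cc4}) to $a' \ind^{K^{*}}_{M} c$ and the same global type $r$ yields a Morley sequence $I^{*}$ in $r$ starting with $c$, indiscernible over $Ma'$, and satisfying $a' \ind^{K^{*}}_{M} I^{*}$. Since both $I$ and $I^{*}$ are Morley in the $M$-invariant type $r$ and share the first term $c$, they have the same type over $Mc$; an automorphism over $Mc$ sending $I^{*}$ to $I$ sends $a'$ to some $\tilde{a}$ satisfying $\tilde{a} \equiv_{Mc} a'$ (hence $\tilde{a} \equiv_{M} a$), $\tilde{a} \ind^{K^{*}}_{M} I$, and $I$ indiscernible over $M\tilde{a}$. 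The single sequence $I$ is now indiscernible over both $Mb$ and $M\tilde{a}$.

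By iterating this construction, or via a Ramsey-type extraction in the spirit of the proof of Claim \ref{4-cc3}, I would upgrade $I$ to be indiscernible over $Mb\tilde{a}$ simultaneously, so that $c_{1} \equiv_{Mb\tilde{a}} c$; crucially, $c_{1}$ still realizes $r|_{Mbc}$, so $c_{1} \ind^{u}_{Mb} c$, which supplies the independence needed to amalgamate the two partial types. The partial type $\mathrm{tp}(a/Mb) \cup \mathrm{tp}(\tilde{a}/Mc_{1})$ is then consistent, and transporting a realization back along the $M\tilde{a}$-automorphism sending $c_{1} \to c$ produces $a'' \models \mathrm{tp}(a/Mb) \cup \mathrm{tp}(a'/Mc)$.

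The Conant-independence $a'' \ind^{K^{*}}_{M} bc$ should then follow by combining $a \ind^{K^{*}}_{M} b$, $\tilde{a} \ind^{K^{*}}_{M} c$, the equivalence $\ind^{K^{*}} = \ind^{Kd}$ (Claim \ref{4-kd4}), and an application of the co-$\mathrm{NSOP}_{1}$ Kim's lemma (Definition \ref{4-consop1}.3) along an invariant Morley sequence of the pair $(b, c)$ built from coheir types over $M$. The main obstacle is precisely this last step: upgrading the two separate individual Conant-independences to a joint one over $bc$ is where the hypothesis $c \ind^{u}_{M} b$ is essential, since it supplies a Morley sequence of $(b, c)$ whose $c$-component is coheir over $Mb$, and along such a Morley sequence Kim's lemma for the co-$\mathrm{NSOP}_{1}$ type $q$ rules out Conant-dividing of any formula $\varphi(x, b, c) \in \mathrm{tp}(a''/Mbc)$.
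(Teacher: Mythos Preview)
Your proposal has a genuine gap at the amalgamation step. After arranging $I=(c_i)$ to be indiscernible over $Mb$ and over $M\tilde a$ (even jointly over $Mb\tilde a$), you assert that ``$c_1 \ind^{u}_{Mb} c$ supplies the independence needed to amalgamate the two partial types,'' and conclude that $\mathrm{tp}(a/Mb)\cup\mathrm{tp}(\tilde a/Mc_1)$ is consistent. But this is exactly an instance of the claim you are proving: you have $a\equiv_M\tilde a$, $a\ind^{K^{*}}_M b$, $\tilde a\ind^{K^{*}}_M c_1$, and $c_1\ind^{u}_M b$, and you are invoking amalgamation. No witness to consistency has actually been produced. (There is a secondary issue: the $M\tilde a$-automorphism sending $c_1\mapsto c$ need not fix $b$, so even granting consistency your transport step would not recover $\mathrm{tp}(a/Mb)$.)

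The paper avoids this by running the Morley sequence on the \emph{$a$-side}. Using symmetry and the chain condition on $a\ind^{K^{*}}_M b$, one takes an $M$-invariant Morley sequence $I=\{a_i\}_{i<\omega}$ with $a_0=a$, indiscernible over $Mb$. From $a'\ind^{K^{*}}_M c$ and $a'\equiv_M a$ one finds $c'$ with $a_ic'\equiv_M a'c$ for every $i$, and after a Ramsey extraction $I$ is indiscernible over $Mbc'$. Then $a$ \emph{itself} realizes $\mathrm{tp}(a/Mb)$ together with the $c'$-translate of $\mathrm{tp}(a'/Mc)$, and $a\ind^{K^{*}}_M bc'$ follows from Kim's lemma for the co-$\mathrm{NSOP}_1$ type $p$ via the indiscernibility of $I$ over $Mbc'$. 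The coheir hypothesis $c\ind^{u}_M b$ is used only afterwards, to correct $c'$ (which has the right type over $M$ but not over $Mb$) to $c$: one extends $bc'$ to a coheir Morley sequence whose second term has $c$-coordinate of type $\mathrm{tp}(c/Mb)$, applies the chain condition to $a\ind^{K^{*}}_M bc'$, and finishes with an $Mb$-automorphism. Your final paragraph's plan for the joint independence is essentially this last maneuver, but it presupposes the consistency your argument never delivers.
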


\begin{proof}
    The proof is similar to Proposition 6.1 of \cite{KR17}. By Claims \ref{4-symm4} and \ref{4-cc4} and $a \ind^{K^{*}}_{M} b$, let $I = \{a_{i}\}_{i < \omega}$ be an $M$-invariant Morley sequence with $a_{0} =a$ that is indiscernible over $Mb$. Again by Claims \ref{4-symm4} and \ref{4-cc4}, $a' \ind^{K^{*}}_{M} c$ and $a' \equiv_{M} a$, for $r(x, y) = \mathrm{tp}(a' c/M)$, $\cup_{i < \omega} r(a_{i}, y)$ is consistent, so we can choose some $c' \models \cup_{i < \omega} r(a_{i}, y) $. By Ramsey's theorem, compactness and an automorphism, we can choose $c'$ in particular so that $I$ remains indiscernible over $Mbc'$. So $bc' \ind_{M}^{K^{*}} a$, and $a \ind_{M}^{K^{*}} bc'$, with $c'a \equiv_{M} ca'$.

Let $s(y)$ be an $M$-finitely satisfiable type extending $\mathrm{tp}(c/Mb)$, and let $c'' \models s(y)|_{Mbc'}$, so $c'' \equiv_{Mb} c$ and $c''\ind_{M}^{u} bc'$. As $c'' \equiv_{M} c$, choose $b'$ with $b'c'' \equiv_{M} bc'$; by left extension for $\ind_{M}^{u}$, $b'$ can further be chosen with $b'c''\ind_{M}^{u} bc'$. Then $bc', b'c''$ begin an $M$-invariant Morley sequence $J$. As $a \ind_{M}^{K^{*}} bc'$, there is an $M$-invariant Morley sequence $J' \equiv_{Mbc'} J$ indiscernible over $Ma$; using Claim \ref{4-cc4}, $a \ind_{M}^{K^{*}} J'$. Write $J'=(bc', b'''c''', \ldots)$. Then $c''' a\equiv_{M} c'a \equiv_{M} ca'$, $c''' \equiv_{Mb} c'' \equiv_{Mb} c$ and $a \ind_{M}^{K^{*}}bc'''$. By an $Mb$-automorphism taking $c'''$ to $c$, we obtain $a'$ as desired.

\end{proof}

Using this weak independence theorem, we can now show the full independence theorem between \textit{two} co-$\mathrm{NSOP}_{1}$ types.

\begin{claim}\label{4-ind4}
 Let $p(x), q(x)$ be co-$\mathrm{NSOP}_{1}$ types over $M$, and let $a \equiv_{M} a' \subseteq p(\mathbb{M})$, $b, c \subseteq q(\mathbb{M})$, $a \ind^{K^{*}}_{M} b$, $a' \ind^{K^{*}}_{M} c$, $c \ind^{K^{*}}_{M} b$. Then there is $a'' \ind_{M}^{K^{*}} bc$  with $a'' \models \mathrm{tp}(a/Mb) \cup \mathrm{tp}(a'/Mc)$. 
\end{claim}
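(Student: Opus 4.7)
The plan is to derive Claim \ref{4-ind4} from the weak independence theorem Claim \ref{4-wit4} by following the structure of that proof, replacing the use of left extension for $\ind^u$ (which required $c \ind^u_M b$) with applications of the chain condition for Conant-independence (Claim \ref{4-cc4}), which is available under the stronger hypothesis $c \ind^{K^*}_M b$.

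The first half of the proof is identical to that of Claim \ref{4-wit4}: apply Claim \ref{4-cc4} to $a \ind^{K^*}_M b$ to obtain an $M$-invariant Morley sequence $I = (a_i)_{i<\omega}$ with $a_0 = a$ indiscernible over $Mb$; then use $a' \ind^{K^*}_M c$ together with Ramsey's theorem and an automorphism to find $c'$ such that $a_i c' \equiv_M a c$ for all $i$ (so in particular $c' a \equiv_M c a'$) and $I$ remains indiscernible over $Mbc'$. This gives $a \ind^{K^*}_M bc'$.

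The divergence occurs at the step where the original proof invoked $c \ind^u_M b$ to find $c''$ with $c'' \equiv_{Mb} c$ and $c'' \ind^u_M bc'$. Under the hypothesis $c \ind^{K^*}_M b$, I would instead apply Claim \ref{4-cc4} to obtain an $M$-finitely satisfiable Morley sequence $(c_j)_{j<\omega}$ with $c_0 = c$ indiscernible over $Mb$, and use a compactness and \emph{EM}-type extraction argument to find $c''$ and a corresponding $b'$ with $c'' \equiv_{Mb} c$, $(b', c'') \equiv_M (b, c')$, and $(bc', b'c'')$ beginning an $M$-invariant Morley sequence $J$. From this point the rest of the proof of Claim \ref{4-wit4} carries over verbatim: use $a \ind^{K^*}_M bc'$ and Claim \ref{4-cc4} to find $J' \equiv_{bc'} J$ indiscernible over $Ma$, extract $c'''$ from the second element of $J'$ with $c'''a \equiv_M ca'$, $c''' \equiv_{Mb} c$, and $a \ind^{K^*}_M bc'''$, then apply an $Mb$-automorphism sending $c'''$ to $c$ to obtain the desired $a''$.

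The main obstacle is the construction of $c''$ in the second step: specifically, one must combine the $M$-finitely satisfiable Morley sequence of $c$ (given by Claim \ref{4-cc4}) with the invariant data witnessed by $I$ to produce a $c''$ with the appropriate type over $Mb$ and enough invariance over $Mbc'$ to begin a Morley sequence $J$. This requires a careful \emph{EM}-type argument combining both sequences, but the structural role of the chain condition for $\ind^{K^*}$ here is directly analogous to that of left extension for $\ind^u$ in the proof of Claim \ref{4-wit4}, and the co-$\mathrm{NSOP}_1$ hypothesis on $q(x)$ ensures that Claim \ref{4-cc4} delivers exactly the needed Morley sequence.
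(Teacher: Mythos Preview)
Your plan has a genuine gap at exactly the point you flag as the ``main obstacle,'' and it does not appear to be fillable along the lines you suggest. The chain condition (Claim \ref{4-cc4}) applied to $c \ind^{K^*}_M b$ gives you an $M$-finitely satisfiable Morley sequence $(c_j)_{j<\omega}$ in some $M$-finitely satisfiable type $r(y)$ with $c_0 = c$, indiscernible over $Mb$. But for the argument of Claim \ref{4-wit4} to go through, you need a $c''$ with $c'' \equiv_{Mb} c$ and $c'' \ind^u_M bc'$. If you take $c'' \models r(y)|_{Mbc'}$ you get $c'' \ind^u_M bc'$ and $c'' \equiv_M c$, but $c'' \equiv_{Mb} c$ would require $r(y)|_{Mb} = \mathrm{tp}(c/Mb)$, which is precisely $c \ind^u_M b$---the hypothesis you no longer have. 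Indiscernibility over $Mb$ of a Morley sequence in $r$ does not upgrade $r$ to extend $\mathrm{tp}(c/Mb)$, and no ``EM-type extraction'' will manufacture an $M$-invariant type over $Mbc'$ whose restriction to $Mb$ is $\mathrm{tp}(c/Mb)$ from this data alone. The analogy with left extension for $\ind^u$ breaks down because left extension genuinely uses finite satisfiability, not just the chain condition.

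The paper's proof takes a completely different route. It first reduces to showing mere consistency of $\mathrm{tp}(a/Mb) \cup \mathrm{tp}(a'/Mc)$ (the $a'' \ind^{K^*}_M bc$ comes afterward by extracting a suitable Morley sequence). Then, assuming inconsistency via formulas $\varphi(x,b), \psi(x,c)$, it builds sequences $(b_i)_{i<\omega}, (c_i)_{i<\omega} \subseteq q(\mathbb{M})$ inductively, using \emph{repeated applications of the weak independence theorem} Claim \ref{4-wit4} (where one always has $\ind^u$ on the appropriate side by construction), so that $\{\varphi(x,b_1),\ldots,\varphi(x,b_m),\psi(x,c_{m+1}),\ldots,\psi(x,c_n)\}$ is consistent for all $m<n$ while $\{\varphi(x,b_i),\psi(x,c_j)\}$ is inconsistent for $i<j$. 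By Fact \ref{4-sop3fact} this yields $\mathrm{SOP}_3$ with parameters in $q(\mathbb{M})$, contradicting that $q$ is co-$\mathrm{NSOP}_1$. So the upgrade from $\ind^u$ to $\ind^{K^*}$ is handled not by modifying the proof of Claim \ref{4-wit4}, but by iterating Claim \ref{4-wit4} inside a Conant-style zig-zag construction.
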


\begin{proof}
We could have followed the proof of Theorem 6.5 of \cite{KR17}, but we offer our own exposition. We first show that it suffices to show consistency of $\mathrm{tp}(a/Mb) \cup \mathrm{tp}(a'/Mc)$ in the statement of Claim \ref{4-theclaim4}. Let $r(x)$ be an $M$-finitely satisfiable type extending $\mathrm{tp}(a/M)$. By Claims \ref{4-symm4} and \ref{4-cc4}, we can find $I= \{a_{i}\}_{i < \omega}$ with $a_{0} =a$ indiscernible over $Mb$ and $I'= \{a'_{i}\}_{i < \omega}$ with $a'_{0} =a'$ indiscernible over $Mc$, both $M$-finitely satisfiable Morley sequences in $r(x)$. Then $I \equiv_{M} I'$. So using the consistency, we can find $I'' \models (I/Mb) \cup \mathrm{tp}(I'/Mc)$, which by Ramsey's theorem and compactness, can be assumed indiscernible over $Mbc$. So $I'' \ind^{K}_{M} bc$ by Claim \ref{4-symm4}, and we can find $a''$ in this sequence as desired.

   Therefore, suppose $a, a', b, c$ are as in the statement of the claim, and $\mathrm{tp}(a/Mb) \cup \mathrm{tp}(a'/Mc)$ is inconsistent. By compactness, there are some $\varphi(x, b) \in \mathrm{tp}(a/Mb)$ and  $\psi(x, c) \in \mathrm{tp}(a'/Mc)$ with $\{\varphi(x, b) , \psi(x, c)\} $ inconsistent. Let $s(y, z) = \mathrm{tp}(bc/M)$. We find $b_{1}, \ldots, b_{n} \ldots \subset q(\mathbb{M})$, $c_{1}, \ldots, c_{n} \subset q(\mathbb{M})$ with the following properties:

   (1) For $m < n$, $b_{i} \ind_{M}^{u} b_{i-1} \ldots b_{1}$ for $i \leq m$, and $c_{i} \ind_{M}^{u} c_{i-1} \ldots c_{m+1}b_{m} \ldots b_{1}$ for $m \leq i \neq n$. Thus by repeated applications of Claim \ref{4-wit4}, $\{\varphi(x, b_{1}), \ldots \varphi(x, b_{m}), \psi(x, c_{m+1}), \ldots, \psi(x,c_{n})\}$ is consistent.

   (2) For $i < j$, $b_{i}c_{j} \models s(y, z)$, so $\{\varphi(x, b_{i}), \psi(y, c_{j})\}$ is inconsistent.

   (3) $b_{1} \ldots, b_{n} \ldots \ind_{M}^{K^{*}} c_{1} \ldots, c_{n} \ldots$ .

By Fact \ref{4-sop3fact}, this will give us a failure of co-$\mathrm{NSOP}_{1}$, a contradiction. We use the technique of Conant, \cite{Co15} (though it is not yet necessary to get $\mathrm{SOP}_{3}$; this technique is similar to the ``zig-zag lemma," Lemma 6.4, from the original proof of the independence theorem in \cite{KR17}). Assume $b_{1}, \ldots, b_{n}, c_{1}, \ldots, c_{n}$ already constructed, satisfying these properties up to $n$ (including $c_{n} \equiv_{M} c$). By repeated instances of Claim \ref{4-wit4} (applied to $q(x), q(x)$), and $b \ind^{K^{*}}_{M} c$, there is $b'_{n+1} \ind^{K^{*}} c_{1} \ldots c_{n}$ with $b'_{n+1} \models \cup_{i =1}^{n} s(y, c_{i})$. Again by Claim \ref{4-wit4}, $b_{n} \ldots b_{1} \ind^{K^{*}}_{M} c_{1}, \ldots c_{n}$, and an automorphism, we can additionally choose $b'_{n+1}=b_{n+1}$ so that $b_{n+1} \ind_{M}^{u} b_{1}\ldots b_{n} $ and $b_{n+1} \ldots b_{1} \ind^{K^{*}}_{M} c_{1}, \ldots c_{n}$. Now choose $c_{n+1} \equiv_{M} c$ with $c_{n+1} \ind^{u} c_{1} \ldots c_{n} b_{1} \ldots b_{n+1}$. We get $b_{n+1} \ldots b_{1} \ind^{K^{*}}_{M} c_{1}, \ldots c_{n+1}$, by the proof of Claim \ref{4-inv5}.
\end{proof}

We first show that the ``moreover" clause follows from the conclusion. Let $a_{1} \equiv_{M} a'_{1} \subseteq p_{1}(\mathbb{M})$, $ a_{2} \models p_{2}(\mathbb{M})$,  $ a_{3} \models p_{3}(\mathbb{M})$ be as in the hypotheses of the theorem. As $a_{2} \ind^{K^{*}}a_{3}$, by Claims \ref{4-kd4} and \ref{4-cc4}, there is an $M$-finitely satisfiable Morley sequence $I_{2}=\{a_{2, i}\}_{i < \omega}$ with $a_{2, 0}=a_{2}$ that is indiscernible over $Ma_{3}$ and $I_{2} \ind^{K^{*}}a_{3}$. Likewise, there is an $M$-finitely satisfiable Morley sequence $I_{3}=\{a_{3, i}\}_{i < \omega}$ with $a_{3, 0}=a_{3}$ that is indiscernible over $MI_{2}$ and with $I_{2} \ind^{K^{*}}I_{3}$. By $a_{1} \ind^{K^{*}}_{M} a_{2}$ and an automorphism, we can find $a^{*}_{1} \equiv_{Ma_{2}} a_{1}$ with $a^{*}_{1} \ind^{K^{*}}_{M} I_{2}$ and $I_{2}$ indiscernible over $Ma^{*}_{1}$, so we can assume $a_{1} \ind^{K^{*}}_{M} I_{2}$ and $I_{2}$ is indiscernible over $Ma_{1}$. Similarly, we can assume $a'_{1} \ind_{M}^{K^{*}}I_{3}$ and $I_{3}$ is indiscernible over $Ma'_{1}$. Fix an $M$-finitely satisfiable type $q(x)$ extending $\mathrm{tp}(a_{1}/M)$. Then there is a Morley sequence $I_{1} = \{a_{1, i}\}_{i < \omega}$ in $q(x)$ with $a_{1, 0}=a_{1}$, $I_{1} \ind^{K^{*}}_{M} I_{2}$ and $I_{1}$ indiscernible over $I_{2}$. There is also a Morley sequence $I'_{1} = \{a'_{1, i}\}_{i < \omega}$ in $q(x)$ with $a'_{1, 0}=a'_{1}$, $I'_{1} \ind^{K^{*}}_{M} I_{3}$ and $I'_{1}$ indiscernible over $I_{3}$. Since the Morley sequences were chosen to be in the same $M$-finitely satisfiable type, $I_{1} \equiv_{M} I'_{1}$. So applying the consistency part of the theorem, we can find $\{a''_{1, i}\}_{i < \omega}=I''_{1} \models \mathrm{tp}(I_{1}/MI_{2}) \cup \mathrm{tp}(I'_{1}/MI_{3})$. For $j, k, \ell < \omega$ $a''_{1, j}a_{2, k} \equiv_{M} a_{1}a_{2}$, $a''_{1, j}a_{3, \ell} \equiv_{M} a'_{1}a_{3}$, $a_{2, k}a_{3, \ell} \equiv_{M} a_{2}a_{3}$. We apply the following case of Lemma 1.2.1 \cite{Che14}:

\begin{fact}\label{4-indisc4}
Let $I''_{1}=\{a''_{1, i}\}_{i < \omega}$, $I_{2}=\{a_{2, i}\}_{i < \omega}$, $I_{3}=\{a_{3, i}\}_{i < \omega}$ be indiscernible sequences over $M$. Then there are mutually indiscernible $I'''_{1}=\{a'''_{1, i}\}_{i < \omega}$, $I'''_{2}=\{a'''_{2, i}\}_{i < \omega}$, $I'''_{3}=\{a'''_{3, i}\}_{i < \omega}$, (i.e. each $I'''_{m}$ indiscernible over $MI'''_{\neq m}$) such that for any formula $\varphi(\overline{x}, \overline{y}, \overline{z}) \in L(M)$, if for all $\overline{j}, \overline{k}, \overline{l}$ with $j_{1}< \ldots<  j_{n} < \omega$, $k_{1}< \ldots<  k_{n} < \omega$, $\ell_{1}< \ldots<  \ell_{n} < \omega$, $\models \varphi(\overline{a}''^{\overline{j}}_{1}, \overline{a}^{\overline{k}}_{2}, \overline{a}^{\overline{\ell}}_{3})  $, then for all such $\overline{j}, \overline{k}, \overline{l}$, $\models \varphi(\overline{a}'''_{1, \overline{j}}, \overline{a}'''_{2, \overline{k}}, \overline{a}'''_{3, \overline{l}})  $ 
\end{fact}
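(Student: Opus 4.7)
The plan is to prove Fact~\ref{4-indisc4} as the standard iterated indiscernible-extraction argument, exploiting the fact that passing to a \emph{subsequence} of an already-indiscernible sequence over a parameter set preserves that indiscernibility. The three-sequence case then reduces to three careful applications of the usual Erd\H{o}s--Rado / Ramsey extraction, performed in a fixed order with an increasing parameter set.

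First I would use compactness to replace each of $I''_1, I_2, I_3$ by longer $M$-indiscernible sequences $J_1, J_2, J_3$ indexed by a sufficiently large linear order (large enough for Erd\H{o}s--Rado relative to $|T|+|M|$), while preserving for every formula $\varphi(\overline{x},\overline{y},\overline{z})\in L(M)$ the property that $\models \varphi(\overline a_1^{\overline j},\overline a_2^{\overline k},\overline a_3^{\overline \ell})$ for all strictly increasing tuples $\overline j,\overline k,\overline \ell$. This is routine since each $I''_1, I_2, I_3$ is individually $M$-indiscernible, so their joint EM-information is finitely consistent.

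Next, I would extract $I'''_1$ from $J_1$ as an $MJ_2J_3$-indiscernible sequence of length $\omega$ whose EM-type over $MJ_2J_3$ is realized by some strictly increasing tuples from $J_1$; this is the standard extraction (Erd\H{o}s--Rado and compactness, combined with choosing a non-principal ultrafilter on increasing tuples, or alternatively the Ramsey-based construction from \cite{TT12}). Crucially, this produces $I'''_1$ whose $n$-type over $M$ together with any finite subtuples of $J_2, J_3$ is consistent with the data transferred from $J_1, J_2, J_3$, so the formula-preservation clause in the statement is inherited. Then I would extract $I'''_2 \subseteq J_2$ to be indiscernible over $MI'''_1 J_3$, and finally $I'''_3 \subseteq J_3$ to be indiscernible over $MI'''_1 I'''_2$.

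The key observation that makes the iteration work is that each successive extraction replaces a factor by a \emph{subsequence} of what was there before (up to an automorphism fixing the relevant parameters one can arrange the extracted sequence to literally be a subsequence). Thus, when $I'''_2 \subseteq J_2$ and $I'''_3 \subseteq J_3$, the earlier indiscernibility of $I'''_1$ over $MJ_2J_3$ automatically restricts to indiscernibility over $MI'''_2 I'''_3$; similarly the indiscernibility of $I'''_2$ over $MI'''_1 J_3$ restricts to indiscernibility over $MI'''_1 I'''_3$. Combined with the directly arranged indiscernibility of $I'''_3$ over $MI'''_1 I'''_2$, this yields the mutual indiscernibility. The formula-preservation clause is preserved throughout because each extraction step only selects among tuples already realizing the relevant $L(M)$-types. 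The main delicate point, which is the chief obstacle, is verifying that one may indeed arrange the extracted $I'''_2, I'''_3$ as literal subsequences so that prior indiscernibilities are not disturbed; this is handled by the standard device of performing the Erd\H{o}s--Rado extraction inside the ambient long sequence and then moving by an automorphism over the previously fixed parameters.
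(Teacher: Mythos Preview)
The paper does not actually give a proof of this statement: it is stated as a \emph{Fact} and attributed to Lemma~1.2.1 of \cite{Che14}, so there is no argument in the paper to compare against. Your proposal is the standard proof of that lemma and is essentially correct. One small point worth tightening: for the iterated Erd\H{o}s--Rado extractions to literally produce \emph{subsequences} (which is what makes the backward propagation of indiscernibility work), you must choose the lengths of $J_1, J_2, J_3$ in the right order---$|J_3|$ large relative to $|M|$, then $|J_2|$ large relative to $|M|+|J_3|$, then $|J_1|$ large relative to $|M|+|J_2|+|J_3|$---so that at each step the sequence being thinned is long enough relative to the current parameter set. You gesture at this but do not spell it out, and your aside about ``moving by an automorphism over the previously fixed parameters'' is unnecessary once the lengths are chosen correctly: Erd\H{o}s--Rado then gives genuine subsequences directly, and no automorphism is needed. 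With that clarification the argument is complete.
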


Let $I'''_{1}=\{a'''_{1,, i}\}_{i < \omega}$, $I'''_{2}=\{a'''_{2, i}\}_{i < \omega}$, $I'''_{3}=\{a'''_{3, i}\}_{i < \omega}$ be as in Fact 4.3. Then ${a'''_{2, 0}}{a'''_{3, 0}}\ind^{K^{*}}_{M}{a'''_{1, 0}}$, ${a'''_{1, 0}}{a'''_{2, 0}}\ind^{K^{*}}_{M}{a'''_{3, 0}}$, and ${a'''_{1, 0}}{a'''_{3, 0}}\ind^{K^{*}}_{M}{a'''_{2, 0}}$, and ${a'''_{1, 0}} {a'''_{2, 0}} \equiv_{M} a_{1}a_{2}$, ${a'''_{1, 0}} {a'''_{3, 0}} \equiv_{M} a'_{1}a_{3}$, ${a'''_{2, 0}} {a'''_{3, 0}} \equiv_{M} a_{2}a_{3}$. So by an automorphism, we find $a''_{1}$ as desired in the ``moreover" clause.

We finally show the actual consistency part of the theorem. Let $q_{1}(y, z) = \mathrm{tp}(a_{2} a_{3}/M)$, $q_{2}(x, z) = \mathrm{tp}(a'_{1} a_{3}/M)$, $q_{3}(x, y) = \mathrm{tp}(a_{1} a_{2}/M)$. By an automorphism, it sufficies to show that $q_{1}(y, z) \cup q_{2}(x, z) \cup q_{3}(x, y)$ is consistent. This will require $\mathrm{NSOP}_{3}$; formally, we will use the technique of Evans and Wong, from Theorem 2.8 of \cite{EW09}. Call $A \subset p_{1}(\mathbb{M})$, $B \subset p_{2}(\mathbb{M})$, $C \subset p_{3}(\mathbb{M})$ a \textit{generic triple} if there are mutually indiscernible $M$-invariant Morley sequences $I_{A} = \{A_{i}\}_{i < \omega}$ with $A_{0} = A$, $I_{B} = \{B_{i}\}_{i < \omega}$ with $B_{0} = B$,  $I_{C} = \{C_{i}\}_{i < \omega}$ with $C_{0} = C$; note that it follows that $A$, $B$ and $C$ are pairwise Conant-independent.

\begin{claim}\label{4-theclaim4}
    Let $A$, $B$, $C$ be a generic triple, and $b \subseteq p_{2}(\mathbb{M})$ such that $A \ind^{K^{*}}_{M} b$. Then there is some $b' \equiv_{MA} b$ with $b' \ind^{K^{*}}_{M} B$ such that $A, Bb', C$ form a generic triple.
\end{claim}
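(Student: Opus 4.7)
The plan is to construct $b'$ as an $MA$-coheir of $\mathrm{tp}(b/MA)$ taken over the existing Morley sequences, which will automatically give $b' \equiv_{MA} b$ together with the required Conant-independences from $B$ and $C$, and then to build the generic-triple Morley sequences using the chain condition. By Theorem \ref{3-main3}, the types $p_1, p_2, p_3$ are all co-$\mathrm{NSOP}_1$, so Claims \ref{4-kd4}--\ref{4-ind4} apply to their realizations. After replacing the invariant types $r_A, r_B, r_C$ governing $I_A, I_B, I_C$ with global $M$-finitely satisfiable extensions (which preserves the generic-triple structure), we may assume $I_A, I_B, I_C$ are coheir Morley sequences.

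First I would construct $b'$. Let $s$ be a global $MA$-finitely satisfiable extension of $\mathrm{tp}(b/MA)$, and put $b' \models s|_{MAI_AI_BI_C}$. Clearly $b' \equiv_{MA} b$ and $b' \ind^u_{MA} I_AI_BI_C$. Mutual indiscernibility of $I_A, I_B, I_C$ implies $I_BI_C$ is indiscernible over $MI_A$, hence over $MA$. Because $b'$ realizes an $MA$-finitely satisfiable type over $MAI_BI_C$, the standard preservation of indiscernibility under invariant extensions shows $I_BI_C$ remains indiscernible over $MAb'$, and a fortiori over $Mb'$. But $I_BI_C$ is an $M$-invariant Morley sequence starting with $BC$ (viewed in the invariant type $r_B \otimes r_C$), so by Kim's lemma for co-$\mathrm{NSOP}_1$ types (clause (3) of Definition \ref{4-consop1}) together with Claim \ref{4-kd4}, we conclude $b' \ind^{K^*}_M BC$, giving in particular $b' \ind^{K^*}_M B$.

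Second, I would construct mutually indiscernible $M$-invariant Morley sequences $J_A, J_{Bb'}, J_C$ starting with $A$, $Bb'$, $C$. The key is to produce a Morley sequence in an $M$-invariant type extending $r_B$ on the first coordinate, whose initial element is $(B,b')$, and which is indiscernible over $MI_AI_C$. Applying the chain condition (Claim \ref{4-cc4}) to $b' \ind^{K^*}_M B$ with $B$ in the co-$\mathrm{NSOP}_1$ type $p_2$ and to $r_B$ yields a Morley sequence in $r_B$ starting with $B$ indiscernible over $Mb'$, which after an $MB$-automorphism may be identified with $I_B$. Combining this with a coheir extension of $b'$ along $I_B$ over $MABCI_AI_C$ and extracting a strongly indiscernible pair sequence via Ramsey and compactness produces the desired $J_{Bb'}$; taking $J_A = I_A$ and $J_C = I_C$ (adjusted by the same automorphism when necessary to preserve first elements) provides the mutually indiscernible triple that witnesses $A, Bb', C$ is a generic triple.

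The main obstacle will be the simultaneous bookkeeping in the last step: making sure $J_{Bb'}$ is a Morley sequence in a genuinely $M$-invariant type (and not merely the $MA$-invariant type $s$ that naturally arises from the coheir construction of $b'$), that its first element is exactly $(B,b')$, that its $y$-projection remains the original $I_B$ (so that the new generic triple genuinely extends the old one), and that its mutual indiscernibility with $I_A$ and $I_C$ is preserved. This is precisely where the co-$\mathrm{NSOP}_1$ chain condition applied to the pair $(B,b')$ together with the pre-existing mutual indiscernibility of $I_A, I_B, I_C$ must be combined, leveraging the invariance preservation we already used in the construction of $b'$.
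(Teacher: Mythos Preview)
Your first step is sound: taking $b'$ as an $MA$-coheir over $I_AI_BI_C$ does give $b' \equiv_{MA} b$ and, via the preserved indiscernibility of $I_BI_C$ over $MAb'$, the relation $b' \ind^{K^*}_M BC$. (The opening remark about ``replacing $r_A, r_B, r_C$ by finitely satisfiable extensions'' is confused---these are already global types, and a Morley sequence in a different global type is a different sequence---but this is not where the real problem lies.)

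The genuine gap is in your second step. You need an $M$-invariant Morley sequence beginning with the pair $Bb'$, mutually indiscernible with sequences beginning at $A$ and $C$. Your $b'$ was produced from an $MA$-invariant type, so the natural sequence you can build from it is $MA$-invariant, not $M$-invariant; the chain condition applied to $b' \ind^{K^*}_M B$ gives a Morley sequence in the $B$-coordinate indiscernible over $Mb'$, not a Morley sequence of \emph{pairs} $(B_i, b'_i)$ in an $M$-invariant type. Your phrase ``combining this with a coheir extension of $b'$ along $I_B$\ldots\ and extracting'' does not produce such a sequence, and the ``bookkeeping'' you flag as the main obstacle is exactly the missing idea, not a detail. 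Note also that your construction never invokes Claim \ref{4-ind4}, which is suspicious: that claim is where the co-$\mathrm{NSOP}_1$ structure from $\mathrm{NSOP}_3$ is actually exploited in the paper's argument.

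The paper proceeds in the opposite order. First (the Subclaim) it builds an $M$-invariant Morley sequence of pairs $\{B_i b''_i\}$ directly, via the tensor product $q \otimes p$ of an $M$-invariant extension of $\mathrm{tp}(b/M)$ with the type generating $I_B$, and checks $I_B \ind^{K^*}_M I_{b''}$ inductively. At this stage $b''$ need not have the right type over $A$. Then it uses $A \ind^{K^*}_M b$ and the chain condition to find $A'$ compatible with all of $I_{b''}$, and applies the independence theorem between two co-$\mathrm{NSOP}_1$ types (Claim \ref{4-ind4}) to $I_{A'} \ind^{K^*}_M I_{b''}$, $I_A \ind^{K^*}_M I_B$, $I_B \ind^{K^*}_M I_{b''}$ to amalgamate, obtaining $I^*_{b''}$ with $b^*_i \equiv_{MA_j} b$ and $\{B_i b^*_i\}$ still an $M$-invariant Morley sequence. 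Only then does one extract mutual indiscernibility (Fact \ref{4-indisc4}). The point is that the $M$-invariant Morley sequence of pairs must be built \emph{before} fixing the type over $A$, and Claim \ref{4-ind4} is what reconciles the two requirements.
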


\begin{proof}
    Let $I_{A}$, $I_{B}$, $I_{C}$ be as in the definition of a generic triple.

    \begin{subclaim}
    There is $b'' \equiv_{M} b$ and $I_{b''}=\{b''_{i}\}_{i < \omega}$ with $b''_{0} = b''$ such that $\{B_{i} b''_{i}\}_{i < \omega}$ forms an invariant Morley sequence over $M$ and $I_{B} \ind^{K^{*}}_{M} I_{b''}$.
    
    \end{subclaim}

    \begin{proof}
    Let $I_{B}$ be a Morley sequence in the $M$-invariant type $p(X)$. Choose an $M$-invariant type $q(x)$ extending $\mathrm{tp}(b/M)$. By an automorphism, there is $I_{b''}=\{b''_{i}\}_{i < \omega}$ such that, for $n < \omega$, $b''_{n}B_{n} \ldots b''_{0}B_{0}\models (q(x)\otimes p(X))^{(n)} $. So $\{B_{i} b''_{i}\}_{i < \omega}$ is an $M$-invariant Morley sequence, and $b''_{0} \equiv_{M} b$; define $b'' := b''_{0}$. Finally, we show that $I_{B} \ind^{K^{*}}_{M} I_{b''}$. Suppose by induction that $B_{0} \ldots B_{n}\ind^{K^{*}}_{M}b''_{0} \ldots b''_{n}$. Note that $\mathrm{tp}(B_{n+1}/MB_{0}\ldots B_{n} b''_{0} \ldots b''_{n})$ extends to an $M$-invariant type, and $\mathrm{tp}(b''_{n+1}/MB_{0}\ldots B_{n}B_{n+1} b''_{0} \ldots b''_{n})$ also extends to a global $M$-invariant type. By the proof of Claim \ref{4-inv5} below, we see that for any sets $e, f, g$ of realizations of a common co-$\mathrm{NSOP}_{1}$ type over $M$, if $e \ind^{K^{*}}_{M} f$ and $\mathrm{tp}(g/Mef)$ extends to a global $M$-invariant type $q(x)$, then $e \ind^{K^{*}}_{M} fg$. So by two applications of this fact and symmetry (Claim \ref{4-symm4}), $B_{0} \ldots B_{n}B_{n+1}\ind^{K^{*}}_{M}b''_{0} \ldots b''_{n}b''_{n+1}$. This completes the induction, from which it follows that $I_{B} \ind^{K^{*}}_{M} I_{b''}$.
    \end{proof}

    Let $b''$ be as in the subclaim. As $A \ind^{K^{*}}_{M} b$, for $p(X, y) = \mathrm{tp}(Ab/ M)$, by Claim \ref{4-cc4} and an automorphism there is $A' \models \cup_{i < \omega} p(X, b''_{i})$ with $A' \ind^{K^{*}}_{M} I_{b''}$. By Claims \ref{4-symm4} and \ref{4-cc4}, we can then find $\{A'_{i}\}=I_{A'} \equiv_{M} I_{A}$ indiscernible over $MI_{b''}$ with $A'_{0}=A'$ and $I_{A'} \ind^{K^{*}}_{M} I_{b''}$. So we have $I_{A'} \equiv_{M} I_{A}$ and $I_{A'} \ind^{K^{*}}_{M} I_{b''}$, $I_{A} \ind^{K^{*}}_{M} I_{B}$ by indiscernibility of $I_{B}$ over $I_{A}$ and claim \ref{4-cc4}, and $I_{B} \ind^{K^{*}}_{M} I_{b''}$ by the subclaim. So by the independence theorem between two co-$\mathrm{NSOP}_{1}$ types (Claim \ref{4-ind4}) and an automorphism, there is some $\{b^{*}_{i}\}_{i < \omega}=I^{*}_{b''}$ with $ I^{*}_{b''} \equiv_{MI_{B}} I_{b''}$ and $I^{*}_{b''} I_{A} \equiv_{M} I_{b''} I_{A'}$. The sequence $\{b^{*}_{i}\}_{i < \omega}=I^{*}_{b''}$ will have the following three properties: $b^{*}_{i} \models p(A_{j}, y)$, so $b^{*}_{i} \equiv_{MA_{j}} b$, for $i, j < \omega$, $\{B_{i}b^{*}_{i}\}_{i < \omega}$ form an $M$-invariant Morley sequence, and $b^{*}_{i} \ind_{M}^{K^{*}} B_{i}$ for $i < \omega$. If we extract mutually indiscernible  sequences from $I_{A}, I_{B}I^{*}_{b''}, I_{C}$, finding $\check{I}_{A}, \check{I}_{B}\check{I}^{*}_{b''}, \check{I}_{C}$ as in Fact \ref{4-indisc4}, then $I_{A} I_{B} I_{C} \equiv_{M} \check{I}_{A} \check{I}_{B} \check{I}_{C}$, so we may assume $\check{I}_{A} = I_{A}$, $\check{I}_{B} = I_{B}$, $\check{I}_{C} = I_{C}$ and then $\check{I}^{*}_{b''}=\{\check{b}_{i}^{*}\}_{i < \omega}$ will also have these three properties that $\{b^{*}_{i}\}_{i < \omega}=I^{*}_{b''}$ has. Let $b' =\check{b}_{0}^{*}$. Then $b'  \equiv_{MA} b$, $b' \ind_{M}^{K} B$ and $I_{A}, I_{B}\check{I}^{*}_{b''}, I_{C}$ will be mutually indiscernible $M$-invariant Morley sequences, so $A, Bb', C$ form a generic triple.  
\end{proof}

Now we find $a^{1}, \ldots, a^{n}, \ldots \models \mathrm{tp}(a_{1}/M)$, $b^{1}, \ldots, b^{n}, \ldots \models \mathrm{tp}(a_{2}/M)$, and $c^{1}, \ldots, c^{n}, \ldots \models \mathrm{tp}(a_{3}/M)$ with the following properties:

(1) For $i < j$, $a^{j}c^{i} \models q_{2}(x, z)$, $a^{i}b^{j} \models q_{3}(x, y)$, $b^{i}c^{j} \models q_{1}(y, z)$.

(2) For $i < \omega$, $a^{i} \ind^{K}_{M} a^{1} \ldots a^{i-1}$, $b^{i} \ind^{K^{*}}_{M} b^{1} \ldots b^{i-1}$, and $c^{i} \ind^{K^{{*}}}_{M} c^{1} \ldots c^{i-1}$

(3) For each $n< \omega$ $a^{1} \ldots a^{n}$, $b^{1} \ldots b^{n}$, $c^{1} \ldots c^{n}$ form a generic triple.

Assume $a^{1}, \ldots, a^{n}$, $b^{1}, \ldots, b^{n}$, $c^{1}, \ldots, c^{n}$ already constructed, satisfying these properties up to $n$. As for $i \leq n$, $a^{i} \ind^{K^{*}}_{M} a^{1} \ldots a^{i-1}$, we can find some $b \models \cup^{n}_{i=1} q_{3}(a^{i}, y)$ with $b \ind_{M}^{K^{*}} a^{1} \ldots a^{n}$, by, say, repeated applications of the independence theorem between two co-$\mathrm{NSOP}_{1}$ types, Claim \ref{4-ind4} (though we could have stated the claim so that we need less than this). Then letting $a^{1}, \ldots, a^{n}=A$, $b^{1}, \ldots, b^{n}=B$, $c^{1}, \ldots, c^{n}=C$, we can choose $b_{n+1}=b'$ as in Claim \ref{4-theclaim4}, while will be as desired. Symmetrically, we find $c_{n+1}$ and $a_{n+1}$.

Now let 

$$\Phi(x^{1}, y^{1}, z^{1}; x^{2}, y^{2}, z^{2}) = q_{2}(x^{2}, z^{1}) \cup q_{3}(x^{1},y^{2}) \cup q_{1}(y^{1}, z^{2})$$ By (1) this has infinite chains, so by $\mathrm{NSOP}_{3}$ and compactness it has a $3$-cycle: some 

$$(d^{1}, e^{1}, f^{1}, d^{2}, e^{2}, f^{2}, d^{3}, e^{3}, f^{3})$$ $$\models \Phi(x^{1}, y^{1}, z^{1}; x^{2}, y^{2}, z^{2}) \cup \Phi(x^{2}, y^{2}, z^{2}; x^{3}, y^{3}, z^{3}) \cup \Phi(x^{3}, y^{3}, z^{3}; x^{1}, y^{1}, z^{1})$$ In particular, $(d^{1}, e^{2}, f^{3}) \models q_{3}(x,y) \cup q_{1}(y, z) \cup q_{2}(x, z)$, as desired. This concludes the proof of the theorem. \end{proof}

It is of interest that the conclusion of Theorem \ref{4-main4} does not hold for $\mathrm{NSOP}_{4}$ theories, nor does it follow from co-$\mathrm{NSOP}_{1}$.

\begin{example}
    Let $T$ be the model companion of the theory of triangle-free tripartite graphs, with the partition denoted by $P_{1}(x), P_{2}(x), P_{3}(x)$ as in Example \ref{4-example3}. Recall that $T$ is $\mathrm{NSOP}_{4}$, and $T$ is a free amalgamation theory in the sense of \cite{Co15}, so $a \ind_{M}^{K^{*}} b$ if and only of $a \cap b \subseteq M$; see Proposition 4.3 of \cite{GFA}. Let $p_{i}(x) := P_{i}(x)$ for $i = 1, 2, 3$. Then the $p_{i}(x)$ are internally stable--the structures $\mathcal{M}_{p_{i}}$ have quantifier elimination in the unary language of $M$-definable subsets of $P_{i}(x)$. Internally stable types are always co-$\mathrm{NSOP}_{1}$: by the proof of Theorem \ref{3-main3}, if an internally stable type $p$ is not co-$\mathrm{NSOP}_{1}$, then in the theory of $\mathcal{M}_{p}$, there is a hypergraph sequence $\{R_{n}\}$, a model $\mathcal{M}$, and Morley sequences $\{b_{i}\}_{i < \omega}$ and $\{b'_{i}\}_{i < \omega}$ with $b_{0} \equiv_{\mathcal{M}}b'_{0}$ such that $\{b_{i}\}_{i < \omega}$ is a clique and $\{b'_{i}\}_{i < \omega}$ is an anti-clique. But this is impossible if the theory of $\mathcal{M}_{p}$ is stable, as $b_{0} \equiv_{\mathcal{M}}b'_{0}$ implies $\{b_{i}\}_{i < \omega} \equiv_{M} \{b'_{i}\}_{i < \omega}$ when $\{b_{i}\}_{i < \omega}$ and $\{b'_{i}\}_{i < \omega}$ are Morley sequences in a stable theory.

    However, the conclusion of Theorem \ref{4-main4} does not hold.  Let $a_{1} \equiv_{M} a'_{1} \subseteq p_{1}(\mathbb{M})$, $ a_{2} \models p_{2}(\mathbb{M})$,  $ a_{3} \models p_{3}(\mathbb{M})$ with $\models a_{1} E a_{2}$, $\models a'_{1} E a_{3}$, $\models a_{2} E a_{3}$. Then $a_{1} \ind^{K^{*}}_{M} a_{2}$, $a'_{1} \ind^{K^{*}}_{M} a_{3}$, $a_{2} \ind^{K^{*}}_{M} a_{3}$, but $\mathrm{tp}(a_{1}/Ma_{2}) \cup \mathrm{tp}(a'_{1}/Ma_{3})$ is inconsistent.
\end{example}

\section{$\mathrm{NSOP}_{3}$ theories with symmetric Conant-independence}

In \cite{Sim20}, Simon proves the following independence theorem for $\mathrm{NTP}_{2}$ theories, using the independence theorem for $\mathrm{NTP}_{2}$ theories of Ben Yaacov and Chernikov (\cite{BYC07}).

\begin{fact}\label{4-indntp2}
Let $T$ be $\mathrm{NTP}_{2}$, and let $c \ind^{f}_{M} ab$ and $b \ind^{f}_{M} a$. Let $b' \equiv_{M} b$ with $b' \ind^{f}_{M} a$. Then there is some $c' \ind^{f}_{M} ab'$ with $c'a \equiv_{M} ca$ and $c'b' \equiv_{M} cb$.
\end{fact}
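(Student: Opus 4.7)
The plan is to reduce to the Ben Yaacov--Chernikov independence theorem for $\mathrm{NTP}_{2}$ theories \cite{BYC07}, which amalgamates $\ind^f$-independent types provided a stronger strict-invariant independence hypothesis is satisfied on one side. The key move is to replace $b'$ with an $Ma$-conjugate $b''$ that is sufficiently generic to fulfill this stronger hypothesis, apply the BYC amalgamation in the generic setting, and then transport the resulting amalgam back from $b''$ to $b'$ via an $Ma$-automorphism.

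First I would exploit the existence of strict nonforking extensions in $\mathrm{NTP}_{2}$: since $b' \ind^{f}_{M} a$, the type $\mathrm{tp}(b'/Ma)$ extends to a global strictly invariant extension over $M$ (an $M$-invariant global type whose Morley sequences are strict). Realize this global type over $Mabc$ by $b''$; then $b'' \equiv_{Ma} b'$, so in particular $b'' \equiv_{M} b$, and $b''$ is strictly invariantly independent of $abc$ over $M$. Next, transport the pair: since $b'' \equiv_{M} b$, pick $\tilde{c}$ with $\tilde{c}b'' \equiv_{M} cb$; from $c \ind^{f}_{M} b$ we obtain $\tilde{c} \ind^{f}_{M} b''$, and $\tilde{c} \equiv_{M} c$.

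Now apply BYC to amalgamate $c$ over $Ma$ with $\tilde{c}$ over $Mb''$: we have $c$ and $\tilde{c}$ with the same Lascar strong type over $M$ (coinciding with $\equiv_{M}$ since $M$ is a model), $c \ind^{f}_{M} a$, $\tilde{c} \ind^{f}_{M} b''$, and $a \ind^{f}_{M} b''$ with strict invariance available on the $b''$ side from Step 1. The BYC independence theorem then produces $c^{*}$ with $c^{*} \equiv_{Ma} c$, $c^{*} \equiv_{Mb''} \tilde{c}$, and $c^{*} \ind^{f}_{M} ab''$. Finally, let $\sigma$ be any $Ma$-automorphism of the monster with $\sigma(b'') = b'$, which exists because $b'' \equiv_{Ma} b'$, and set $c' := \sigma(c^{*})$: then $c' \equiv_{Ma} c$, $c'b' = \sigma(c^{*} b'') \equiv_{M} c^{*} b'' \equiv_{M} \tilde{c} b'' \equiv_{M} cb$, and $c' \ind^{f}_{M} ab'$ by transport of independence under $\sigma$.

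The main obstacle is the generic replacement step: we need $b''$ to realize the same type as $b'$ over $Ma$ (so that the final transport works) while being generic enough over $abc$ to satisfy the strict-invariance hypothesis of BYC in the required direction. This relies essentially on the existence of strict nonforking extensions in $\mathrm{NTP}_{2}$ theories, which is the genuine $\mathrm{NTP}_{2}$-theoretic input; once $b''$ is in hand, everything else is a bookkeeping of types under an $Ma$-automorphism together with one invocation of BYC.
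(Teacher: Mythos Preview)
The paper does not give its own proof of this statement: it is stated as a \emph{Fact} quoted from Simon \cite{Sim20}, with the remark that Simon's argument uses the Ben Yaacov--Chernikov independence theorem \cite{BYC07}. Your proposal is correct and is exactly the route the paper attributes to Simon: pass to a strictly nonforking copy $b''$ of $b'$ over $Ma$ (using the existence of strict nonforking extensions in $\mathrm{NTP}_{2}$, due to Chernikov--Kaplan), apply the Ben Yaacov--Chernikov amalgamation over $a$ and $b''$, and then conjugate back to $b'$ by an $Ma$-automorphism. There is nothing further to compare.
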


(In fact, Simon proves a more general version of this over extension bases.) He then poses the question

\begin{question}\label{4-simon?}
Suppose $p$ and $q$ are $M$-invariant types in an $\mathrm{NTP}_{2}$ theory with $p^{(\omega)}|_{M} = q^{(\omega)}|_{M}$, and let $B, C \supseteq M$ be small supersets of $M$. For some/every $B' \equiv_{M} B$ such that $B' \ind^{f}_{M} C$, is there $a \models p(x)|_{B'} \cup q(x)|_{C}$ with $a \ind^{f}_{M} B'C$?

\end{question}

This is true for simple theories by the independence theorem for simple theories (\cite{KP99}), and for NIP theories because $p^{(\omega)}(x)|_{M}$ determines any invariant type $p(x)$ (Proposition 2.36 of \cite{sim12}); Fact \ref{4-indntp2} justifies the equivalence of ``some" with ``any" $B'$. We show that a similar property holds for finitely satisfiable types in $\mathrm{NSOP}_{3}$ theories with symmetric Conant-independence:

\begin{theorem} \label{4-main5}
Let $T$ be an $\mathrm{NSOP}_{3}$ theory, and assume $\ind^{K^{*}}$ is symmetric. Suppose $p$ and $q$ are $M$-finitely satisfiable (global) types with $p^{(\omega)}|_{M} = q^{(\omega)}|_{M}$, and let $a, b \supseteq M$ be small supersets of $M$ with $a \ind^{K}_{M} b$. Then there is $c \models p(x)|_{a} \cup q(x)|_{b}$ with $c \ind^{K^{*}}_{M} ab$.

\end{theorem}

The ``some" part, the analogue of a positive answer to Question \ref{4-simon?}, will be supplied by the symmetry of Conant-independence. Then the ``every" part, corresponding to Fact \ref{4-indntp2}, will follow from $\mathrm{NSOP}_{3}$. 

We first study theories where Conant-independence is symmetric. Naïvely, one expects it to follow from compactness that $a\ind^{K^{*}}_{M}b$ implies the existence of an $Ma$-indiscernible $M$-invariant Morley sequence starting with $b$. This naïve argument fails, because the property of being an invariant Morley sequence of realizations of a fixed complete type over $M$ is not type-definable. However, the following proposition about theories with symmetric Conant-independence is enough for our purposes:

\begin{lemma}\label{4-symm5}
Suppose $\ind^{K^{*}}$ is symmetric, and let $I =\{a_{i}\}_{i \in \omega}$ be a coheir Morley sequence over $M$ with $a_{0}=a$ that is indiscernible over $Mb$. Then there is an $M$-invariant Morley sequence $J = \{b_{i}\}_{i\in \omega}$ with $b_{0}=b$ that is indiscernible over $Ma$.

\end{lemma}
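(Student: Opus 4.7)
The plan is to argue in two stages: first derive $a \ind^{K^{*}}_{M} b$ from the existence of $I$ together with the symmetry hypothesis, then construct the required $J$ by a compactness and extraction argument that exploits the specific structure of $I$.

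For the first stage, observe that a coheir Morley sequence is in particular an invariant Morley sequence. Since $I$ starts with $a$ and is indiscernible over $Mb$, every formula $\psi(y, a) \in \mathrm{tp}(b/Ma)$ satisfies that $\{\psi(y, a_{i})\}_{i < \omega}$ is consistent, being realized by $b$ itself. Thus no formula in $\mathrm{tp}(b/Ma)$ Conant-divides over $M$, so $b \ind^{K^{*}}_{M} a$, and the symmetry hypothesis gives $a \ind^{K^{*}}_{M} b$.

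For the second stage, fix a global $M$-invariant extension $q(y)$ of $\mathrm{tp}(b/M)$ (for instance, a coheir) and let $J^{*} = \{b_{\alpha}\}_{\alpha < \kappa}$ be a sufficiently long Morley sequence in $q$ with $b_{0} = b$. The plan is to find some $a' \equiv_{Mb} a$ realizing $\mathrm{tp}(a/Mb_{\alpha})$ simultaneously for $\alpha$ in a cofinal set of indices, then use Ramsey and compactness to extract an $Ma'$-indiscernible subsequence $J'$ of length $\omega$ containing $b$ as its first element. One then applies an automorphism $\sigma \in \mathrm{Aut}(\mathbb{M}/Mb)$ sending $a'$ to $a$ and sets $J = \sigma(J')$: the sequence $J$ starts with $\sigma(b) = b$, is indiscernible over $M\sigma(a') = Ma$, and is a Morley sequence in $\sigma(q)$, which is again an $M$-invariant type extending $\mathrm{tp}(b/M)$, so $J$ is an $M$-invariant Morley sequence as required.

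The main obstacle is producing the single witness $a'$ realizing enough of the types $\mathrm{tp}(a/Mb_{\alpha})$ at once: Conant-independence $a \ind^{K^{*}}_{M} b$ alone guarantees only a per-formula invariant Morley sequence witness of non-Conant-dividing, not a uniform one. To bridge this gap, the proof uses the concrete coheir witness $I$---which gives a \emph{uniform} non-Conant-dividing witness for the entire type $\mathrm{tp}(b/Ma)$---and transports this uniformity back across the swap using symmetry of $\ind^{K^{*}}$ via a zig-zag argument on the array formed by $I$ and a coheir Morley sequence extending $J^{*}$. This is exactly the place where the specific coheir Morley hypothesis on $I$, rather than the abstract relation $a \ind^{K^{*}}_{M} b$, is essential.
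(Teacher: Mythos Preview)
Your first stage is essentially fine (modulo a small imprecision: non-Conant-dividing and non-Conant-forking are not the same notion, but the coheir Morley sequence $I$ in fact witnesses non-Conant-\emph{forking} of every formula in $\mathrm{tp}(b/Ma)$ by Fact~\ref{4-conantforkdivide}, so $b \ind^{K^{*}}_{M} a$ and hence $a \ind^{K^{*}}_{M} b$ do hold).

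The second stage, however, is not a proof. You correctly identify the obstacle --- that $a \ind^{K^{*}}_{M} b$ gives only a per-formula invariant Morley witness, not a uniform one along your pre-chosen $J^{*}$ --- but your proposed resolution (``a zig-zag argument on the array formed by $I$ and a coheir Morley sequence extending $J^{*}$'') is a gesture, not an argument. Nothing you wrote explains how to produce a single $a'$ with $a' b_{\alpha} \equiv_{M} ab$ for cofinally many $\alpha$ in a Morley sequence of a \emph{fixed} invariant type $q$; indeed this is exactly what fails naively, as the paragraph preceding the lemma in the paper points out.

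The paper's approach is genuinely different from your sketch. Rather than fixing $q$ in advance and searching for $a'$, it builds the sequence $\{b_{i}\}$ one term at a time. The key move is to apply symmetry not to $b \ind^{K^{*}}_{M} a$ but to $b \ind^{K^{*}}_{M} I$ (which follows from Fact~\ref{4-conantforkdivide} applied to the blocked coheir sequence), obtaining $I \ind^{K^{*}}_{M} b$. Now for each formula $\varphi(\bar{x},b) \in \mathrm{tp}(I/Mb)$ there is some $b'$ with $b' \ind^{i}_{M} b$ and $\{\varphi(\bar{x},b),\varphi(\bar{x},b')\}$ consistent. The crucial technical point you are missing is that the condition ``$y \ind^{i}_{M} b$'' is \emph{type-definable} over $Mb$, so compactness yields a single $b' \equiv_{MI} b$ with $b' \ind^{i}_{M} b$; after Ramsey, $I$ stays indiscernible over $Mbb'$. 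Iterating transfinitely gives $b_{i}$, $i<\kappa$, with $b_{i} \ind^{i}_{M} b_{<i}$ and $I$ indiscernible over $Mb_{\leq i}$; Erd\H{o}s--Rado then extracts an $MI$-indiscernible (hence $Ma$-indiscernible) invariant Morley sequence starting with $b$. Note the sequence $J$ is not in any pre-specified invariant type --- this is precisely what makes the argument go through.
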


\begin{proof}
The main claim of this proof is the following;

\begin{claim}
There exists $b'\equiv_{MI} b$ with $b' \ind_{M}^{i} b$ such that $I$ remains indiscernible over $bb'$.
\end{claim}

\begin{proof}

We first show that $b \ind_{M}^{K^{*}}I$. We need the following fact:

\begin{fact} \label{4-conantforkdivide} (Fact 6.1, \cite{GFA})
Let $\{c_{i}\}_{i \in \omega}$ be a coheir Morley sequence over $M$ with $c_{0}=c$ such that $\{\varphi(x, c_{i})\}_{i \in \omega}$ is consistent. Then $\varphi(x, c)$ does not Conant-fork over $M$.

\end{fact}

Now suppose $\varphi(x, \bar{a}) \in \mathrm{tp}(b/MI)$ for $\bar{a}=a_{0} \ldots a_{n}$. Then $\{\bar{a}_{i}\}_{i \in \omega}$ for $\bar{a}_{i}=a_{ni}\ldots a_{ni+(n-1)}$ is a finitely satisfiable Morley sequence over $M$ with $\bar{a}_{0}=\bar{a}$ such that $\{\varphi(x, \bar{a}_{i})\}_{i \in \omega}$ is consistent, so by the fact, $\varphi(x, \bar{a})$ does not Conant-fork over $M$ and $b \ind_{M}^{K^{*}}I$ is as desired. (See the proof of Proposition 5.3 of \cite{NSOP2}, or Proposition 3.21 of \cite{KR17}.)

Let $q(\bar{x}, b) =\mathrm{tp}(I/Mb)$. By symmetry, $I \ind^{K^{*}}_{M} b$, so for every $\varphi(\bar{x}, b) \in q(\bar{x}, b)$, there is some $b' \equiv_{M} b$ with $b'\ind^{i}_{M}b$ such that $\{\varphi(\bar{x}, b), \varphi(\bar{x}, b')\}$ is consistent. By compactness, the condition $x \ind^{i}_{M} b$ is type-definable over $Mb$ (contrast with the remark on invariant Morley sequences in the paragraph immediately preceding the proof of the proposition), so there is $b' \equiv_{M} b$ with $b'\ind^{i}_{M}b$ such that $q(\bar{x}, b) \cup q(\bar{x}, b')$ is consistent. By an automorphism, we can assume $b' \equiv_{MI} b$, and by Ramsey's theorem and compactness (and an automorphism), we can assume $I'$ is indiscernible over $bb'$.

\end{proof}

We now show by induction that we can find $b_{i}$ for $i < \kappa$, $\kappa$ large, such that $b = b_{0}$, $b_{i} \equiv_{MI} b$, $b_{i} \ind^{i}_{M} b_{< i}$, and $I$ is indiscernible over $Mb_{0} \ldots b_{\lambda}$ for $\lambda \leq  \kappa $. Suppose we have found $b_{i}$ for $ i < \lambda$ and we find $b_{\lambda}$: By the claim, there are $b'_{< \lambda} \equiv_{MI} b_{<\lambda}$ with $b'_{<\lambda} \ind^{i}_{M} b_{<\lambda}$ and $I$ indiscernible over $b_{<\lambda} b'_{\lambda}$. Now let $b_{\lambda} = b'_{0}$.

Then by the Erdős-Rado theorem, we can find an $MI$-indiscernible invariant Morley sequence sequence $J$ over $M$ starting with $b$, which will in particular be $Ma$-indiscernible.

\end{proof}

\begin{remark} \label{4-conantdiv}
Conant-forking is often equal to Conant-dividing at the level of formulas; for example if $\ind^{i}$ satisfies left extension, or $T$ has the \textit{strong witnessing property} that has no known counterexamples among the $\mathrm{NSOP}_{4}$ theories (Definition 3.5 of \cite{GFA}). In particular, we know of no theories where $\ind^{K^{*}}$ is symmetric and the relation $a \ind^{K^{*}d}_{M} b$, defined to hold when $\mathrm{tp}(a/Mb)$ has no Conant-dividing formulas, is not symmetric. If we assume the symmetry of $\ind^{K^{*}d}$ rather than $\ind^{K^{*}}$, we can prove Lemma \ref{4-symm5} for $I$ an invariant Morley sequence over $M$ rather than a coheir Morley sequence over $M$; the only difference is that we no longer use Fact \ref{4-conantforkdivide} on coheir Morley sequences. If we assume the symmetry of $\ind^{K^{*}d}$ rather than $\ind^{K^{*}}$ in Theorem \ref{4-main5}, we can then prove the conclusion when $p$ and $q$ are assumed to be $M$-invariant types rather than $M$-finitely satisfiable types and $\ind^{K^{*}}$ is replaced with $\ind^{K^{*}d}$, getting something closer to the claim of Simon in \cite{Sim20}; the proof will be exactly the same as the below, except Fact \ref{4-conantforkdivide} will not be used.

\end{remark}

\begin{lemma}\label{4-consistency5}
Assume $\ind^{K^{*}}$ is symmetric. Let $p$ and $q$ be $M$-finitely satisfiable types with $p^{(\omega)}|_{M} = q^{(\omega)}|_{M}$, and let $a, b \supseteq M$ be small sets containing $M$. Then there is some $M$- invariant type $r \vdash \mathrm{tp}(b/M)$ such that, for any $a_{1} \ldots a_{n}$ with $a_{i} \models \mathrm{tp}(a/M)$ for all $i < \omega$, and such that $b_{1} \ldots b_{m} \models r^{(m)}(y_{1}, \ldots, y_{m})|_{Ma_{1}\ldots a_{n}}$, $p(x)|_{a_1, \ldots, a_n}\cup \bigcup_{i=1}^{m} q(x)|_{b_{i}}$ is consistent.

\end{lemma}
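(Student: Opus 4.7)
The plan is to take $r$ to be an arbitrary global $M$-finitely satisfiable extension of $\mathrm{tp}(b/M)$ (obtained by extending via an ultrafilter on $M$-definable sets of appropriate arity), and to verify the consistency via a compactness argument that uses only $p|_{M} = q|_{M}$ (a consequence of $p^{\omega}|_{M} = q^{\omega}|_{M}$) together with the fact that finite satisfiability of $r$ over $M$ passes to each iterated Morley sequence type $r^{(p)}|_{Ma_{1}\ldots a_{n}}$.

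Fix such an $r$ and consider arbitrary $a_{1}, \ldots, a_{n} \models \mathrm{tp}(a/M)$ and $b_{1}, \ldots, b_{m} \models r^{(m)}|_{Ma_{1}\ldots a_{n}}$. By compactness, consistency of $p|_{a_{1}, \ldots, a_{n}} \cup q|_{b_{1}, \ldots, b_{m}}$ reduces to realizing each finite fragment
$$\exists x\ \bigwedge_{s} \varphi_{s}(x, \bar{a}_{s}) \wedge \bigwedge_{t} \psi_{t}(x, \bar{b}_{t}),$$
with $\varphi_{s}(x, y) \in p$, $\psi_{t}(x, z) \in q$, $\bar{a}_{s}$ from $Ma_{1}\ldots a_{n}$, and $\bar{b}_{t}$ from $Mb_{1}\ldots b_{m}$. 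Collecting the distinct $b_{j}$'s appearing as $b_{j_{1}}, \ldots, b_{j_{p}}$, rewrite the fragment as $\Theta(b_{j_{1}}, \ldots, b_{j_{p}})$ for a formula $\Theta(\bar{y})$ over $Ma_{1}\ldots a_{n}$. Since Morley sequences in invariant types are indiscernible, $\mathrm{tp}(b_{j_{1}}\ldots b_{j_{p}}/Ma_{1}\ldots a_{n}) = r^{(p)}|_{Ma_{1}\ldots a_{n}}$, so it suffices to show $\Theta(\bar{y}) \in r^{(p)}|_{Ma_{1}\ldots a_{n}}$.

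The heart of the argument is verifying that $\Theta(\bar{m}_{1}, \ldots, \bar{m}_{p})$ holds for every choice of $M$-tuples $\bar{m}_{v}$ of the appropriate arity. After substituting each $b_{j_{v}}$ by $\bar{m}_{v} \in M^{|b|}$, every $\psi_{t}(x, \bar{b}_{t})$ becomes a $q$-formula whose parameters lie entirely in $M$, hence belongs to $q|_{M} = p|_{M} \subseteq p|_{Ma_{1}\ldots a_{n}}$. The full conjunction therefore lies in the consistent type $p|_{Ma_{1}\ldots a_{n}}$ and is realized by any $c \models p|_{Ma_{1}\ldots a_{n}}$, so $\Theta(\bar{m}_{1}, \ldots, \bar{m}_{p})$ holds. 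Since $r^{(p)}|_{Ma_{1}\ldots a_{n}}$ is a complete type that is $M$-finitely satisfiable, we cannot have $\neg \Theta(\bar{y}) \in r^{(p)}|_{Ma_{1}\ldots a_{n}}$ (it would otherwise be realized by some $\bar{m} \in (M^{|b|})^{p}$, contradicting the previous step), whence $\Theta(\bar{y}) \in r^{(p)}|_{Ma_{1}\ldots a_{n}}$ and the fragment is realized.

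No substantive obstacle is expected; the argument is a clean application of compactness and finite satisfiability, with the role of $p|_{M} = q|_{M}$ being simply to convert $q$-formulas with $M$-parameters into $p$-formulas that any realization of $p|_{Ma_{1}\ldots a_{n}}$ automatically satisfies. The only bookkeeping is to ensure that finite satisfiability passes from $r$ to $r^{(p)}$, which is a standard fact about Morley sequences in finitely satisfiable types.
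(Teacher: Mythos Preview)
There is a genuine gap in the substitution step. You claim that after replacing each $b_{j_v}$ by a tuple $\bar{m}_v \in M^{|b|}$, the formula $\psi_t(x, \bar{b}_t)$ ``becomes a $q$-formula whose parameters lie entirely in $M$, hence belongs to $q|_M$.'' But membership of a formula $\psi_t(x, c)$ in the global type $q$ depends on the specific parameter $c$, not merely on the formula template $\psi_t(x, y)$. Knowing $\psi_t(x, \bar{b}_t) \in q$ tells you nothing about whether $\psi_t(x, \bar{m}) \in q$ for an unrelated tuple $\bar{m}$ from $M$. For instance, if $q$ is the average along an ultrafilter $U$ on $M$ in the random graph and $\psi(x,y)$ is $x E y$, then $\psi(x, c) \in q$ holds exactly when $\{m \in M : m E c\} \in U$; there is no reason this should persist after replacing $c$ by an arbitrary $m' \in M$. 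So the claim that $\Theta(\bar m_1, \ldots, \bar m_p)$ holds for \emph{every} $M$-tuple is unsupported, and the finite-satisfiability argument collapses.

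A further warning sign is that your argument uses neither the symmetry of $\ind^{K^*}$ nor the full strength of $p^\omega|_M = q^\omega|_M$ (only $p|_M = q|_M$). The paper's proof genuinely needs both. It first realizes a common Morley sequence $I \models p^\omega|_{M'}$ with $I \models q^\omega|_{b'}$, where $M' \succ M$ is $|M|+|a|$-saturated and $b' \equiv_M b$, then applies Lemma~\ref{4-symm5} (which relies on symmetry of $\ind^{K^*}$) to produce an $M$-invariant Morley sequence of copies of $M'b'$ indiscernible over a realization $c \in I$. The desired $r$ is the projection of the resulting invariant type to the $b'$-coordinates; it is this specific $r$, tailored so that $c$ simultaneously realizes $p|_{M'}$ and $q|_{b'_1,\ldots,b'_m}$, that makes the consistency go through. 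An arbitrary $M$-finitely satisfiable extension of $\mathrm{tp}(b/M)$ has no reason to work.
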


\begin{proof}
    (See also the proof of Proposition 5.7 of \cite{NSOP2}, or Proposition 6.10, \cite{KR17}.) Let $I \models p^{(\omega)}|_{M} = q^{(\omega)}|_{M}$. By an automorphism, there is an $|M|+|a|$-saturated model $M' \succ M$ such that $I \models p^{(\omega)}|_{M'}$, and also by an automorphism, there is some $b' \equiv_{M}b$ with $I \models q^{(\omega)}|_{b'}$. By Ramsey's theorem and compactness, we can assume $I$ is indiscernible over $M'b'$; now let $c=c_{0}$ for $I = \{c_{i}\}_{i \in \omega}$. By Lemma \ref{4-symm5}, there is an $M$-invariant type $s(X, y) \vdash \mathrm{tp}(M'b'/M)$ and a Morley sequence $\{M'_{i}b'_{i}\}_{i \in \omega}$ with $M'_{0}b'_{0}=M'b'$ in $s(X,y)$ that is indiscernible over $Mc$. In particular, $p(x)|_{M'}\cup \bigcup^{m}_{i=1} q(x)|_{b'_{i}}$ is consistent, realized by $c$. Let $r(y) = s(X, y)|_{y}$; then for all $b''_{1} \ldots b''_{m} \models r^{(m)}(y_{1}, \ldots, y_{m})|_{M'}$, $p(x)|_{M'}\cup \bigcup^{m}_{i=1} q(x)|_{b''_{i}}$ is consistent. (This is because $b''_{1} \ldots b''_{m} \equiv_{M'} b'_{1} \ldots b'_{m}$: $ M'_{1}b'_{1} \ldots M'_{m}b'_{m} \models S^{(m)}(X_{1}, y_{1}, \ldots X_{m}, y_{m})|_{M'}$, so $b'_{1} \ldots b'_{m} \models S^{(m)}(X_{1}, y_{1}, \ldots X_{m}, y_{m})|_{M', y_{1}, \ldots. y_{m}})=r^{(m)}(y_{1}, \ldots, y_{m})|_{M'}$.) Let $a_{1}, \ldots, a_{n}$ have $a_{i} \models \mathrm{tp}(a/M)$ for $i \leq n$, and $b_{1}, \ldots b_{m} \models r^{(m)}(y_{1}, \ldots, y_{m})|_{Ma_{1}, \ldots, a_{n}}$. By $|M|+|a|$-saturation of $M'$, there are $a'_{1}, \ldots a'_{n} \in M'$ with $a'_{1}, \ldots a'_{n} \equiv_{M} a_{1}, \ldots a_{n}$. Let $b''_{1}, \ldots b''_{m} \models r^{(m)}(y_{1}, \ldots, y_{m})|_{M'}$. Then $p(x)|_{a'_1, \ldots, a'_n}\cup \bigcup^{m}_{i=1} q(x)|_{b''_{i}}$ is consistent. But by invariance, $a'_{1} \ldots a'_{n} b''_{1} \ldots b''_{n} \equiv_{M} a_{1} \ldots a_{n} b_{1} \ldots b_{n}$. So $p(x)|_{a_1, \ldots, a_n}\cup \bigcup^{m}_{i=1} q(x)|_{b_{i}}$ is consistent, as desired. 
\end{proof}

We are now ready to prove Theorem \ref{4-main5}.

\begin{proof}

First of all, replacing $p$ with $p^{(\omega)}$ and $q$ with $q^{(\omega)}$, we may assume that $p|_{M}=q|_{M}$ is the type of a coheir Morley sequence over $M$. Now assume $p(x)|_{a} \cup q(x)|_{b}$ is consistent, realized by a coheir Morley sequence $I$. It can be assumed indiscernible over $ab$ by Ramsey's theorem and compactness, so $ab \ind^{K^{*}}_{M} I$ by the paragraph immediately following Fact \ref{4-conantforkdivide}, and $I \ind^{K^{*}}_{M} ab$ by symmetry. So it suffices to show $p(x)|_{a} \cup q(x)|_{b}$ is consistent. Suppose otherwise, so there are $\varphi(x, a) \in p(x)|_{a} $ and $\psi(x, b) \in q(x)|_{b}$ such that $\{\varphi(x, a), \psi(x, b)\}$ is inconsistent. Let $s(w, y) = \mathrm{tp}(a, b/M)$, and let $r(y)$ be as in Lemma \ref{4-consistency5}. Once again, we use Conant's technique, Theorem 7.17 of \cite{Co15}. By induction, we will find $a_{1}, \ldots, a_{n}, \ldots, b_{1}, \ldots, b_{n}, \ldots$ such that

(1) For $i < j$, $a_{j} b_{i} \equiv_{M} ab$, so $\{\varphi(x, a_{j}), \psi(x, b_{i})\}$ is inconsistent and $a_{i} \models \mathrm{tp}(a/M)$ for each $i \geq 1$.

(2) For $n \leq m$, $b_{n+1}, \ldots b_{m} \models r^{(m-n)}(y)|_{Ma_{1}\ldots a_{n}}$, so $p(x)|_{a_1, \ldots, a_n}\cup \bigcup^{m}_{i=n+1} q(x)|_{b_{i}}$ and in particular $\{\varphi(x, a_{1}), \ldots \varphi(x, a_{n}), \psi(x, b_{n+1}) \ldots \psi(x, b_{m})\}$ is consistent by Lemma \ref{4-consistency5}.

Assume $a_{1}, \ldots, a_{n}, b_{1}, \ldots, b_{n}$ have already been constructed satisfying (1) and (2) up to $n$. Then  $b_{1}, \ldots, b_{n}$ begin an invariant Morley sequence in $\mathrm{tp}(b/M)$, so because $a \ind^{K}_{M} b$, $\cup_{i =1}^{n} s(w, b_{i})$ is consistent, and we can take $a_{n+1}$ to realize it. Then we can take $b_{n+1} \models r(y)|_{a_{1}, \ldots, a_{n+1}b_{1}\ldots b_{n}}$.

By \ref{4-sop3fact}, properties (1) and (2) imply $\mathrm{SOP}_{3}$--a contradiction. This proves Theorem \ref{4-main5}.

\end{proof}

However, as with the other two main theorems, Theorem \ref{4-main5} fails for $\mathrm{NSOP}_{4}$ theories with symmetric Conant-independence.

\begin{example}
The model companion $T$ of the theory of triangle-free graphs is $\mathrm{NSOP}_{4}$ and has symmetric Conant-independence; see \cite{GFA}. If $p$ is a nonalgebraic $M$-finitely satisfiable type, $p^{(\omega)}|_{M}$ is determined by $p|_{M}$: By  indiscernibility, $\neg x_{i} E x_{j} \in p^{(\omega)}(\bar{x})$ for $i < j$, as $ x_{i} E x_{j} \in p^{(\omega)}(\bar{x})$ for all $ i < j$ is impossible.

Next, we claim that, if $M$ is countable, for $p_{0}(x) \in S_{1}(M)$ the complete type over $M$ containing $\neg x E m$ for all $m \in M$, there are $M$-finitely satisfiable types $p_{1}$ and $p_{2}$ extending $p_{0}(x)$, and $x E b_{i} \in p_{i}(x)$ for $i =1,2$, $b_{i} \notin M$, such that there is no $m \in M$ with $b_{1} E m \wedge b_{2} E m$. Let $\{S_{i}\}_{i \in \omega}$ enumerate the set $F$ of subsets of $M$ defined by $M$-formulas in $p_{0}(x)$. We choose, by induction, disjoint anticliques $A$, $B$ of $M$, both of which meet each of the $S_{i}$. Namely, we construct disjoint anticliques $A_{n}, B_{n}$ for $n \in \omega$, such that $A_{n} \cap S_{i} \neq \emptyset$ and $B_{n} \cap S_{i} \neq \emptyset$ for $i \leq n$ and $A_{i} \subseteq A_{j}$ and $B_{i} \subseteq B_{j}$ for $i \leq j$, and take $A = \cup^{\infty}_{i = 0} A_{i}$ and $B = \cup^{\infty}_{i = 0} B_{i}$. Suppose $A_{n}, B_{n}$ already constructed. Since $S_{n+1}$ contains a set defined by a conjunction of formulas of the form $x \neq m$ and $\neg xEm$ for $m \in M$, and $M$ is a model of the model companion of the theory of triangle-free graphs, we can find distinct $a_{n+1}, b_{n+1} \in S_{n+1} \backslash A_{n} \cup B_{n}$ such that $\neg a_{n+1} E a$ for any $a \in A_{n}$, $\neg b_{n+1} E b$ for $b \in B_{n}$, and take $A_{n+1} = A_{n} \cup \{a_{n+1}\}$ and $B_{n+1} = B_{n} \cup \{b_{n+1}\}$. Now let $U_{1}$ be an ultrafilter containing $F \cup \{A\}$ and $U_{2}$ be an ultrafilter containing $F \cup \{B\}$. Let $p_{i}(x)= \{\varphi(x, b) \in L(\mathbb{M}): \varphi(M, b) \in U_{i}  \}$. Let $b_{1} \in \mathbb{M}$ be such that, for $m \in M$, $b_{1} E m $ if and only if $m \in A$, and similarly for $b_{2}$ and $B$. This is possible because $A$ and $B$ are anticliques. Then $p_{1}$, $p_{2}$, $b_{1}$, $b_{2}$ are as desired in the claim.

There is an invariant type $q$ extending $\mathrm{tp}(b_{1}/M)$ such that, for $b'_{1}\models q(x)|_{Mb_{2}}$, $b'_{1} E b_{2}$; for example, we can require that $x E b \in q(x)$ if and only if $x E b \in \mathrm{tp}(b_{1}/M)$  or $b \models \mathrm{tp}(b_{2}/M)$. This gives a consistent type: let $a_{*}$ be a node satisfying these relations in a graph extending $\mathbb{M}$; then there are no triangles involving $a_{*}$, a realization of $\mathrm{tp}(b_{2}/M)$ in $\mathbb{M}$, and an element of $M$, because we chose $A$ and $B$ to be disjoint; there are also no edges between realizations of $\mathrm{tp}(b_{2}/M)$ in $\mathbb{M}$, because $B$ is nonempty and there are no triangles in $\mathbb{M}$, so there are no triangles involving $a_{*}$ and two realizations of $\mathrm{tp}(b_{2}/M)$ in $\mathbb{M}$. Let $b'_{1} \models q(x)|_{Mb_{2}}$; then $b'_{1} \ind^{K} b_{2}$ and $b'_{1} E b_{2}$.

But $p_{1}(x)|Mb'_{1} \cup p_{2}(x)|Mb_{2}$ is inconsistent.
\end{example}

In the proof of Theorem \ref{4-main5} above, symmetry of $\ind^{K^{*}}$ is not used directly in building the configuration satisfying (1) and (2); this is in contrast to \cite{NSOP2}, where the rows are required to be (coheir) Conant-independent throughout the construction. We now prove a version of Theorem \ref{4-main5} for Kim-nonforking types over $M$ rather than finitely satisfiable or invariant types over $M$, that uses the full force of the assumption that the relevant independence relation, in this case $\ind^{K^{*}d}$, is symmetric.

By Remark \ref{4-conantdiv}, in an $\mathrm{NSOP}_{3}$ theory where Conant-forking coincides with Conant-dividing and $\ind^{K^{*}}$ is symmetric, Theorem \ref{4-main5} holds even if $p$ and $q$ are only assumed to be $M$-invariant types with $p^{(\omega)}|_{M} = q^{(\omega)}|_{M}$, rather than $M$-finitely satisfiable types. In this case, $p$ and $q$ are examples of types, such that for any small $A,B \supseteq M$, there are $M$-invariant Morley sequences $I=\{a_{i}\}_{i \in \omega}$ and $I' = \{b_{j}\}_{j \in \omega}$, such that $a_{i} \models p(x)|_{A}$ and $b_{i} \models q(x)|_{B}$ for $i \geq 0$, $I \equiv_{M} I'$, and $I \ind^{K}_{M} A$ (and $I' \ind^{K}_{M} B$). This assumption can be seen as an analogue of $p^{(\omega)}|_{M} = q^{(\omega)}|_{M}$ for Kim-nonforking types over $M$, and yields the conclusion of Theorem \ref{4-main5} with respect to $\ind^{i}$:

\begin{theorem}\label{4-main5b}
    Assume $\ind^{K^{*}d}$ is symmetric and $T$ is $\mathrm{NSOP}_{3}$. Let $p(x)$ be an $M$-invariant type, $a, b \supseteq M$ be small supersets of $M$ with $a \ind_{M}^{i} b$ and $I$, $J$ $M$-invariant Morley sequences in $p(x)$ indiscernible over $a$ and $b$ respectively, with $I \ind_{M}^{K} a$. Then there is some $I'' \ind_{M}^{K^{*}d} ab$ with $I'' \equiv_{a} I$ and $I'' \equiv_{b} I'$. If $\ind^{f}$ (resp. $\ind^{K}$) satisfies the chain condition, the assumption $a \ind_{M}^{i} b$ can be relaxed to $a \ind_{M}^{f} b$ (resp. $a \ind_{M}^{K} b$).
\end{theorem}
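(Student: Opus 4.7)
The plan is to mirror the proof of Theorem \ref{4-main5}, with three substitutions: invariant Morley sequences in $p(x)$ replace single realizations of finitely satisfiable types; $\ind^{K^{*}d}$ replaces $\ind^{K^{*}}$; and $\ind^{i}$ replaces $\ind^{K}$. The sequence $I''$ plays the role of $c$, the condition $I \equiv_{M} J$ (both being Morley in $p$) plays the role of $p^{\omega}|_{M} = q^{\omega}|_{M}$, and the $M$-invariant extension of $\mathrm{tp}(a/Mb)$ coming from $a \ind^{i}_{M} b$ replaces the Kim-nonforking chain condition.

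First I would invoke the $\ind^{K^{*}d}$-upgrade of Lemma \ref{4-symm5} from Remark \ref{4-conantdiv}: under symmetric $\ind^{K^{*}d}$, if $I^{*}$ is an $M$-invariant Morley sequence indiscernible over $c$ with $I^{*} \ind^{K^{*}d}_{M} c$, there is an $M$-invariant Morley sequence starting at $c$ indiscernible over $MI^{*}$. Applied to $I$ and $a$ (using $\ind^{K}\Rightarrow \ind^{K^{*}d}$), this produces a Morley sequence $(a_{i})_{i<\omega}$ of some $M$-invariant type $r_{a}$ with $a_{0}=a$, indiscernible over $MI$, so that $I \equiv_{Ma_{i}} I$ for each $i$. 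I would then prove an analog of Lemma \ref{4-consistency5}: there is an $M$-invariant type $r \vdash \mathrm{tp}(b/M)$ such that for $b_{1},\ldots,b_{m}$ Morley in $r$ over $M a_{1}\cdots a_{n}$ (with the $a_{i}$ realizations of $r_{a}$), the joint type $\bigcup_{i}\mathrm{tp}(I/M a_{i}) \cup \bigcup_{j}\mathrm{tp}(J/M b_{j})$ is consistent. The construction mirrors Lemma \ref{4-consistency5}: pass to a saturated $M'\supseteq M$, move $I$ by an $M$-automorphism so that $I$ is indiscernible over $M'$ (using that $I$ is Morley in the $M$-invariant type $p$, hence $I \models p^{\omega}|_{M'}$ up to automorphism), use $I \equiv_{M} J$ to find $b' \equiv_{M} b$ with $I \equiv_{b'} J$, Ramsey-extract so $I$ is indiscernible over $M'b'$, and apply the upgraded Lemma \ref{4-symm5} with $c=I$ (as a tuple) to $M'b'$ to obtain a Morley sequence of $M'b'$-copies indiscernible over $MI$; let $r$ be the projection to the $b'$ coordinate.

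The central step is Conant's $\mathrm{SOP}_{3}$-technique, exactly as in Theorem \ref{4-main5}. Assume for contradiction that $\mathrm{tp}(I/Ma)\cup\mathrm{tp}(J/Mb)$ is inconsistent, with witnesses $\varphi(X,a)\in\mathrm{tp}(I/Ma)$ and $\psi(X,b)\in\mathrm{tp}(J/Mb)$ where $X$ is a finite subtuple of the Morley variables. Build inductively $a_{1},\ldots,a_{n},\ldots$ and $b_{1},\ldots,b_{n},\ldots$ with (i) $a_{j}b_{i}\equiv_{M}ab$ for $i<j$, giving inconsistency of $\{\varphi(X,a_{j}),\psi(X,b_{i})\}$, and (ii) the Lemma \ref{4-consistency5}-analog ensuring consistency of $\{\varphi(X,a_{1}),\ldots,\varphi(X,a_{n}),\psi(X,b_{n+1}),\ldots,\psi(X,b_{m})\}$ for $n\leq m$. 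At the inductive step, choose $a_{n+1}$ as a realization of $p_{a}^{*}|_{Mb_{1}\cdots b_{n}}$ for $p_{a}^{*}$ an $M$-invariant extension of $\mathrm{tp}(a/Mb)$ from $a\ind^{i}_{M}b$, so (i) is automatic by $M$-invariance; then $b_{n+1}$ is the next element of a Morley sequence in $r$. Fact \ref{4-sop3fact} then gives $\mathrm{SOP}_{3}$, a contradiction. For the $\ind^{f}$ and $\ind^{K}$ strengthenings, the chain condition directly produces $a_{n+1}\models\bigcup_{i\leq n}\mathrm{tp}(a/Mb_{i})$ in place of the invariant extension argument.

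For the independence conclusion: given any $I''$ realizing $\mathrm{tp}(I/Ma)\cup\mathrm{tp}(J/Mb)$, by Ramsey and compactness extract an indiscernible $I'''$ over $Mab$ realizing the same type; since $I'''$ is then an $M$-invariant Morley sequence in $p$ that is $Mab$-indiscernible, stretching $I'''$ to a longer Morley sequence over $Mab$ witnesses that no formula in $\mathrm{tp}(I'''/Mab)$ Conant-divides, giving $I''' \ind^{K^{*}d}_{M} ab$. I expect the main obstacle to be the Lemma \ref{4-consistency5}-analog, where one must juggle the saturated $M'$, indiscernibility of $I$ over $M'$, and the matching to $J$ over $b'$, while keeping enough control to invoke the upgraded Lemma \ref{4-symm5}; in Theorem \ref{4-main5} the coheir property streamlined this, but here $I$ is only $\ind^{K}_{M} a$, so its status as a Morley sequence in the $M$-invariant type $p$ must be used to obtain the needed indiscernibility over $M'$.
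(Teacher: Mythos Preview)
Your approach has the right shape but there is a genuine gap in the analog of Lemma \ref{4-consistency5}, and a knock-on gap in the inductive construction.

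In Lemma \ref{4-consistency5}, the point is that $p$ is a global $M$-invariant type, so for $a' \in M'$ with $a' \equiv_M a$ the restriction $p(x)|_{a'}$ is already determined and realized by $c$. In your setting, what you need over each $a_i$ is not $p^\omega|_{Ma_i}$ but the full type $\mathrm{tp}(I/Ma)$, and there is no reason that $I \models p^\omega|_{M'}$ should give $\mathrm{tp}(I/Ma') = \mathrm{tp}(I/Ma)$ for $M$-conjugates $a'$ of $a$ inside $M'$. So the saturated-$M'$ construction does not go through, and the lemma cannot hold for arbitrary $a_i \models \mathrm{tp}(a/M)$. The paper instead proves the lemma only for $a_1, \ldots, a_n$ that \emph{begin an $M$-invariant Morley sequence}: it enumerates all such sequences $\{K^i\}_{i<\kappa}$, uses $I \ind^K_M a$ to realize $\mathrm{tp}(I, a/M)$ simultaneously over each $K^i$ (this is exactly where Kim-nonforking of $I$ over $a$ is spent), collects them into a single set $K'$, and only then applies the upgraded Lemma \ref{4-symm5} to $K'b'$.

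This forces a change in the inductive construction: you must arrange that $\{a_i\}$ forms an invariant Morley sequence, not merely that each $a_{n+1}$ realizes $p_a^*$ over $b_1 \cdots b_n$. Your choice $a_{n+1} \models p_a^*|_{Mb_1 \cdots b_n}$ gives $a_{n+1} b_i \equiv_M ab$ but says nothing about $a_{n+1}$ over $a_1 \cdots a_n$. The paper handles this by maintaining a third inductive invariant $a_{\leq i} \ind^{K^+}_M b_{\leq i}$ (where $a \ind^{K^+}_M b$ means there is an invariant Morley sequence in $b$ indiscernible over $Ma$) and proving a separate lemma (Lemma \ref{4-wit5}): from $a_{<\lambda} \ind^{K^+}_M b_{<\lambda}$ and some $a_\lambda' \ind^K_M b_{<\lambda}$ one finds $a_\lambda \equiv_{Mb_{<\lambda}} a_\lambda'$ with both $a_\lambda \ind^i_M a_{<\lambda}$ and $a_{\leq \lambda} \ind^{K^+}_M b_{<\lambda}$. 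Symmetry of $\ind^{K^*d}$ enters again here, via symmetry of $\ind^{K^+}$. The induction is run to a large $\kappa$ and Erd\H{o}s--Rado is applied to extract an indiscernible $\{a_i b_i\}_{i<\omega}$, so that $\{a_i\}$ becomes a genuine invariant Morley sequence and the lemma applies. Your $\omega$-length construction with $a_{n+1} \models p_a^*|_{Mb_1 \cdots b_n}$ skips all of this, and without it the consistency half of the $\mathrm{SOP}_3$-configuration cannot be verified.
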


Note that $\ind^{f}$ is known to satisfy the chain condition in $\mathrm{NTP}_{2}$ theories (Proposition 2.8, \cite{BYC07}). It is not known whether there are non-simple examples of $\mathrm{NSOP}_{3}$ $\mathrm{NTP}_{2}$ theories. (Problem 3.16, \cite{Che14}).

We start with the analogue of Lemma \ref{4-consistency5}.

\begin{lemma}\label{4-consistency5b}
 Let $M$, $I$, $J$, $a$, $b$, be as in the statement of Theorem \ref{4-main5b}, and assume $\ind^{K^{*}d}$ is symmetric. Let  $p(X, y) = \mathrm{tp}(I, a/M)$ and $q(X, z) = \mathrm{tp}(J, b/M)$. Then there is some invariant type $r \models \mathrm{tp}(b/M)$ such that for $a_{1}, \ldots, a_{n}$ with $a_{i} \models \mathrm{tp}(a/M)$ for $i < n$ beginning an invariant Morley sequence over $M$ and $b_{1}, \ldots b_{m} \models r^{(m)}(y)|_{Ma_{1}\ldots a_{n}}$, $\cup_{i=1}^{n}p(X, a_{i}) \cup \bigcup_{i =1}^{m} q(X, b_{i})$ is consistent.
\end{lemma}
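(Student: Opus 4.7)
I plan to adapt the proof of Lemma \ref{4-consistency5} to the invariant-type setting, replacing the use of Lemma \ref{4-symm5} (in its coheir form) with the invariant Morley sequence variant noted in Remark \ref{4-conantdiv}, which holds under symmetry of $\ind^{K^{*}d}$. Since $J$ is an $M$-invariant Morley sequence in $p$ indiscernible over $Mb$, an argument parallel to Fact \ref{4-conantforkdivide} for invariant (rather than coheir) Morley sequences gives $b \ind^{K^{*}d}_{M} J$, and by symmetry $J \ind^{K^{*}d}_{M} b$. The invariant version of Lemma \ref{4-symm5} then produces an $M$-invariant type $r(z) \vdash \mathrm{tp}(b/M)$ and a Morley sequence $\{b'_{j}\}_{j < \omega}$ in $r$ with $b'_{0} = b$ that is indiscernible over $MJ$; this $r$ is the one in the statement.

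The consistency claim amounts to finding, for any $\{a_{i}\}, \{b_{j}\}$ as in the hypothesis, some $X$ with $Xa_{i} \equiv_{M} Ia$ and $Xb_{j} \equiv_{M} Jb$ for all $i, j$. The candidate $X$ will be an automorphic copy $J^{*}$ of $J$. For the $b$-side, since $\{b_{j}\}$ viewed over $M$ is a Morley sequence in $r$, the $M$-invariance of $r$ supplies an $M$-automorphism $\sigma$ sending $\{b'_{j}\}$ to $\{b_{j}\}$; setting $J^{*} := \sigma(J)$, we have $J^{*} \equiv_{M} J$ indiscernible over $M\{b_{j}\}$ with $J^{*}b_{j} \equiv_{M} Jb$ for every $j$. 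To secure the $a$-side simultaneously, I would apply the invariant variant of Lemma \ref{4-symm5} also to $I$ (indiscernible over $Ma$, with $I \ind^{K^{*}d}_{M} a$ following from $I \ind^{K}_{M} a$, hence $a \ind^{K^{*}d}_{M} I$ by symmetry), yielding an $M$-invariant type $s_{a}(y) \vdash \mathrm{tp}(a/M)$ with a Morley sequence $\{a^{*}_{i}\}$ starting at $a$ indiscernible over $MI$. The hypothesis $a \ind^{i}_{M} b$ from Theorem \ref{4-main5b} would then let me build a joint Morley configuration of a tensor of $s_{a}$ with $r$ over which an automorphic copy of $I$ is indiscernible, analogous to the sequence $\{(M'_{i}, b'_{i})\}$ in the proof of Lemma \ref{4-consistency5}. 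Fact \ref{4-indisc4} is invoked to extract mutually indiscernible components from $(I, \{a_{i}\}, \{b_{j}\})$ while preserving the needed relations, after which $M$-invariance of $r$ and $s_{a}$ transfers the consistency verified in the internally built configuration to the given $\{a_{i}\}, \{b_{j}\}$.

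The main obstacle is that $\{a_{i}\}$ is only required to begin some invariant Morley sequence, so it need not be Morley in the specific $s_{a}$ produced from $I$, and accordingly $I$ is not automatically indiscernible over $M\{a_{i}\}$. This is unlike the finitely satisfiable setting of Lemma \ref{4-consistency5}, where the $a_{i}$'s are unconstrained realizations of $\mathrm{tp}(a/M)$ that can be absorbed into a saturated auxiliary model $M'$. Reconciling an arbitrary $q_{a}$-Morley sequence with the internally built $s_{a}$-configuration while preserving compatibility with $r$ is where the interplay of $I \ind^{K}_{M} a$, $a \ind^{i}_{M} b$, and $\ind^{K^{*}d}$-symmetry becomes essential, and is the most delicate step in the argument.
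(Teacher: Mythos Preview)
You correctly identify the central difficulty: the given $a_1, \ldots, a_n$ begin an \emph{arbitrary} invariant Morley sequence in $\mathrm{tp}(a/M)$, not one in a fixed type $s_a$ that you construct, and your plan does not actually resolve this. The appeal to $a \ind^i_M b$ and to Fact \ref{4-indisc4} is not concrete enough to bridge the gap, and in fact the hypothesis $a \ind^i_M b$ plays no role in the paper's proof of this lemma at all (it enters only later, in the construction for Theorem \ref{4-main5b} itself). Moreover, building $r$ from $J$ and $b$ alone, as you propose, cannot encode any information about the $a_i$'s, so there is no reason the final invariance transfer should succeed.

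The paper's solution replaces the saturated auxiliary model $M'$ from Lemma \ref{4-consistency5} by a different object: it enumerates \emph{all} invariant Morley sequences $\{K^i\}_{i < \kappa}$ in $\mathrm{tp}(a/M)$, and, using $I \ind^K_M a$ together with automorphisms, produces copies $K'^i$ so that every element of each $K'^i$ has the same type over $MI$ as $a$ does. Setting $K' = \bigcup_i K'^i$ and choosing $b'$ with $Ib' \equiv_M Jb$, one applies Ramsey and compactness to make $I$ indiscernible over $K'b'$, and then the invariant form of Lemma \ref{4-symm5} (Remark \ref{4-conantdiv}) yields an $M$-invariant type $s(X, y) \vdash \mathrm{tp}(K'b'/M)$ with a Morley sequence in $s$ indiscernible over $MI$; the desired $r$ is the restriction $s|_y$. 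The point is that any given $a_1, \ldots, a_n$ now has a copy $a'_1, \ldots, a'_n$ inside $K'$, and invariance of $r$ transfers consistency from $(a'_1, \ldots, a'_n, b''_1, \ldots, b''_m)$ to $(a_1, \ldots, a_n, b_1, \ldots, b_m)$, exactly as $M'$-saturation did in Lemma \ref{4-consistency5}. This universal-over-all-Morley-sequences construction is the missing idea in your plan.
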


\begin{proof}
    Let $\{K^{i}\}_{i < \kappa}$ enumerate the invariant Morley sequences in $\mathrm{tp}(a/M)$. Since $I \ind_{M}^{K} a$, $\{p(X, a_{i})\}_{i \in \omega}$ is consistent for $\{a_{i}\}$ any invariant Morley sequence in $\mathrm{tp}(a/M)$, so by automorphisms, there are $\{K'^{i}\}_{i < \kappa}$ such that for $i< \kappa$ and $K'^{i}=\{a'^{i}_{j}\}_{j \in \omega}$, for $j \in \omega$, $a'^{i}_{j} \equiv_{MI} a$. Let $K'= \cup_{i < \kappa} K'^{i}$. By another automorphism, find $b'$ with $b'I \equiv bJ$. Then by Ramsey's theorem and compactness, $I$ can be assumed indiscernible over $K'b'$. By Remark \ref{4-conantdiv}, there is an $M$-invariant type $s(X, y) \vdash \mathrm{tp}(K'b'/M)$ and a Morley sequence $\{K'_{i}b'_{i}\}_{i < \omega }$ with $K'_{0}b'_{0}=K'b'$ in $s(X,y)$ that is indiscernible over $MI$.

    In particular, for any $i < \kappa$, $\cup_{j < \omega} p(X, a'^{i}_{j}) \cup \bigcup_{j < \omega} q(X, b'_{j})$ is consistent, realized by $I$. Let $r(y) = s(X, y)|_{y}$; then for $b''_{1}, \ldots b''_{m} \models r^{(m)}(y)|_{K'}$, for any $i < \kappa$, $\cup_{j < \omega} p(X, a'^{i}_{j}) \cup \bigcup_{j \leq m} q(X, b''_{j})$ is consistent. (As in the proof of Lemma \ref{4-consistency5}), $b''_{1} \ldots b''_{m} \equiv_{K'} b'_{1} \ldots b'_{m}$: $ K'_{1}b'_{1} \ldots K'_{m}b'_{m} \models S^{(m)}(X_{1}, y_{1}, \ldots X_{m}, y_{m})|_{K'}$, so $b'_{1} \ldots b'_{m} \models S^{(m)}(X_{1}, y_{1}, \ldots X_{m}, y_{m})|_{K', y_{1}, \ldots. y_{m}})=r^{(m)}(y_{1}, \ldots, y_{m})|_{K'}$.) Let $a_{1}, \ldots, a_{n}$ begin an invariant Morley sequence over $M$ with $a_{i} \models \mathrm{tp}(a/M)$ for $i \leq n$, and $b_{1}, \ldots b_{m} \models r^{(m)}(y)|_{Ma_{1}, \ldots, a_{n}}$. Then there are $a'_{1}, \ldots a'_{n} \in K'$ with $a'_{1}, \ldots a'_{n} \equiv_{M} a_{1}, \ldots a_{n}$. Let $b''_{1}, \ldots b''_{m} \models r^{(m)}(y)|_{K'}$. Then $\cup_{j \leq n} p(X, a'_{j}) \cup \bigcup_{j \leq m} q(X, b''_{j})$ is consistent. But by invariance, $a'_{1} \ldots a'_{n} b''_{1} \ldots b''_{n} \equiv_{M} a_{1} \ldots a_{n} b_{1} \ldots b_{n}$. So $\cup_{j \leq n} p(X, a_{j}) \cup \bigcup_{j \leq m} q(X, b_{j})$  is consistent, as desired. 
\end{proof}

We fix the auxilliary notation $a\ind_{M}^{K^{+}}b$ to mean that there is an $M$-invariant Morley sequence $J = \{b_{i}\}_{i\in \omega}$ with $b_{0}=b$ that is indiscernible over $Ma$. By Remark \ref{4-conantdiv}, if $\ind^{K^{*}d}$ is symmetric then so is $\ind^{K^{+}}$. We prove the following lemma about $\ind^{K^{+}}$ and $\ind^{K}$:

\begin{lemma}\label{4-wit5}
   Let $d_{0} \ind^{K^{+}}_{M} c$ and $d_{1} \ind^{K}_{M} c$. Then there is $d'_{1} \equiv_{Mc} d_{1}$ with $d_{0}d'_{1} \ind^{K^{+}} c$ and $d'_{1} \ind^{i}_{M} d_{0}$.
\end{lemma}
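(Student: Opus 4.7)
The plan is to combine the invariant Morley sequence supplied by $d_{0} \ind^{K^{+}}_{M} c$ with a Kim's-lemma realization argument for $d_{1} \ind^{K}_{M} c$, and then to amalgamate via an independence-theorem-style merge. First, using $d_{0} \ind^{K^{+}}_{M} c$, fix an $M$-invariant type $p(x)$ and an $M$-invariant Morley sequence $I=(c_{i})_{i<\omega}$ in $p$ with $c_{0}=c$ and $I$ indiscernible over $Md_{0}$. Also fix an $M$-invariant type $q(y)$ extending $\mathrm{tp}(d_{1}/M)$, so that realizing $q|_{Md_{0}}$ will automatically give $\ind^{i}_{M} d_{0}$. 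By compactness, extend $I$ to a long $M$-invariant Morley sequence $I^{*}=(c_{i})_{i<\kappa}$ still indiscernible over $Md_{0}$.

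Second, from $d_{1} \ind^{K}_{M} c$, Kim's lemma (valid in the ambient $\mathrm{NSOP}_{1}$-flavored setting used throughout, or via the hypotheses in force at this point of the paper) guarantees that $\bigcup_{i<\kappa}\mathrm{tp}(d_{1}/Mc_{i})$ is consistent. Let $d_{1}^{\sharp}$ realize it, so $d_{1}^{\sharp}\equiv_{Mc} d_{1}$. Apply Erd\H{o}s--Rado to $I^{*}$ with parameters $Md_{0}d_{1}^{\sharp}$ to extract a subsequence $J$ indiscernible over $Md_{0}d_{1}^{\sharp}$; since a subsequence of an invariant Morley sequence in $p$ is still an invariant Morley sequence in $p$, $J$ is itself an $M$-invariant Morley sequence, and using indiscernibility of $I^{*}$ over $Md_{0}$, an $Md_{0}$-automorphism may be used to arrange that $J$ starts with $c$ (passing through the pigeonhole that $c_{\xi_{0}}\equiv_{Md_{0}} c$).

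Third, adjust $d_{1}^{\sharp}$ to also satisfy $q|_{Md_{0}}$. Since $\mathrm{tp}(d_{1}^{\sharp}/M)=q|_{M}$, the partial type $\mathrm{tp}(d_{1}^{\sharp}/MJ)\cup q|_{Md_{0}}$ can be shown consistent by first realizing $q|_{Md_{0}J}$ by some $d_{1}^{\dagger}$ and then merging $d_{1}^{\sharp}$ and $d_{1}^{\dagger}$ by an independence-theorem-style amalgamation over $M$: $d_{1}^{\sharp}$ and $d_{1}^{\dagger}$ agree on $M$, $d_{1}^{\sharp}\ind^{K}_{M}J$ (from Kim's lemma), $d_{1}^{\dagger}\ind^{i}_{M}d_{0}$ hence $\ind^{K}$, and $J\ind^{K}_{M}d_{0}$ from the Morley structure of $I$ over $Md_{0}$; the amalgam $d_{1}'$ then satisfies $d_{1}'\equiv_{MJ}d_{1}^{\sharp}$ and $d_{1}'\equiv_{Md_{0}}d_{1}^{\dagger}$. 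Then $d_{1}'\equiv_{Mc}d_{1}$ (via $c\in J$), $J$ witnesses $d_{0}d_{1}'\ind^{K^{+}}_{M}c$, and $d_{1}'\ind^{i}_{M}d_{0}$ by construction.

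The main obstacle is arranging all three conditions simultaneously: the type agreement $d_{1}'\equiv_{Mc}d_{1}$, the invariance $d_{1}'\ind^{i}_{M}d_{0}$, and joint indiscernibility of the Morley sequence over $Md_{0}d_{1}'$. Each holds in isolation, but their combination requires carefully interleaving Kim's-lemma consistency, Erd\H{o}s--Rado extraction from a sufficiently long $M$-invariant Morley sequence, and an independence-theorem-style merge; in particular, obtaining joint indiscernibility over $Md_{0}d_{1}'$ (as opposed to separately over $Md_{0}$ and $Md_{1}'$) is the subtlest point, and is precisely what forces the extraction step and the compatibility of $q|_{Md_{0}}$ with the Kim-realization to be handled together rather than sequentially.
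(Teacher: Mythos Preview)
Your third step is where the argument breaks: you invoke an ``independence-theorem-style amalgamation,'' but this lemma is proved under the hypotheses of Section~5 (namely $T$ is $\mathrm{NSOP}_3$ and $\ind^{K^{*}d}$ is symmetric), and no independence theorem for $\ind^K$ is available there---indeed, having one would force $\mathrm{NSOP}_1$. Even if you were granted an amalgam $d_1'$ with $d_1' \equiv_{MJ} d_1^\sharp$ and $d_1' \equiv_{Md_0} d_1^\dagger$, your conclusion that $J$ witnesses $d_0 d_1' \ind^{K^{+}}_M c$ does not follow: that requires $J$ indiscernible over $Md_0 d_1'$ \emph{jointly}, whereas the amalgam only controls $\mathrm{tp}(d_1'/MJ)$ and $\mathrm{tp}(d_1'/Md_0)$ separately, not $\mathrm{tp}(d_1'/Md_0 J)$. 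You flag this as ``the subtlest point'' but never resolve it. (Incidentally, what you call Kim's lemma in step two is just the definition of $\ind^{Kd}$: if no formula of $\mathrm{tp}(d_1/Mc)$ Kim-divides, then $\bigcup_i \mathrm{tp}(d_1/Mc_i)$ is consistent along \emph{any} invariant Morley sequence, so no extra hypothesis is needed there.)

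The paper's route sidesteps both problems by using the symmetry of $\ind^{K^{+}}$, which is exactly what the ambient assumption on $\ind^{K^{*}d}$ buys. After arranging $I$ indiscernible over $Md_0 d_1''$ (your steps one through four, done by moving $d_1''$ via Ramsey rather than extracting a subsequence), one has $d_0 d_1'' \ind^{K^{+}}_M I$; symmetry then gives $I \ind^{K^{+}}_M d_0 d_1''$, i.e.\ an invariant Morley sequence of copies of $d_0 d_1''$ over $MI$. Its second term $d_0' d_1'$ satisfies $d_0' d_1' \equiv_{MI} d_0 d_1''$ and $d_0' d_1' \ind^{i}_M d_0 d_1''$; one more Ramsey step makes $I$ indiscernible over $Md_0 d_1'' d_0' d_1'$, hence over $Md_0 d_1'$. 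This delivers $d_1' \equiv_{Mc} d_1$, $d_1' \ind^{i}_M d_0$, and the $\ind^{K^{+}}$ witness simultaneously---the symmetry assumption is doing the work you asked the independence theorem to do.
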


\begin{proof}

    (See also the proof of Proposition 5.7 of \cite{NSOP2}, or Proposition 6.10, \cite{KR17}.) 
    By $d_{0} \ind^{K^{+}}_{M} c$, let $I=\{c_{i}\}_{i \in \omega}$ with $c_{0} = c$ be an $M$-invariant Morley sequence indiscernible over $d_{0}$. By $d_{1} \ind^{K}_{M} c$ and compactness, there is some $d''_{1}$ with $d''_{1} c_{i} \equiv_{M} d_{1}c$ for $i < \omega$. By Ramsey's theorem, compactness, and an automorphism, we can choose $d''_{1}$ so that $I$ is indiscernible over $Md_{0}d''_{1}$, so $d_{0}d''_{1}\ind^{K^{+}}_{M} I$: if $I$ is a Morley sequence in the $M$-invariant type $s$, then by compactness, there is a Morley sequence $\{I_{i}\}_{i < \omega}$ in $s^{(\omega)}$ over $M$ with $I_{0} = I$ that is indiscernible over $Md_{0}d''_{1}$ (See also the paragraph immediately following Fact \ref{4-conantforkdivide}). So by the paragraph immediately preceding the statement of the lemma, $I \ind^{K^{+}}_{M} d_{0}d''_{1}$, and in particular there is $d'_{0}d'_{1} \ind^{i}_{M} d_{0}d''_{1}$ with $d'_{0}d'_{1} \equiv_{MI} d_{0}d''_{1}$; by Ramsey's theorem, compactness, and an automorphism, we can choose $d'_{0}d'_{1}$ so that $I$ is indiscernible over $Md'_{0}d'_{1}d_{0}d''_{1}$. Then, again, $d'_{0}d'_{1}d_{0}d''_{1} \ind^{K^{+}}_{M} I$, so in particular,  $d_{0}d'_{1} \ind^{K^{+}} c$; also, $d_{1}' \equiv_{Mc} d''_{1} \equiv_{Mc} d_{1}$.
\end{proof}

We are now ready to prove Theorem \ref{4-main5b}. Note that if we can find $I''$ so that $I'' \equiv_{a} I$ and $I'' \equiv_{b} I'$, then $I''$ can be chosen indiscernible over $ab$, so $ab \ind^{K^{*}d}_{M}I''$, and $I'' \ind^{K^{*}d}_{M} ab$. So it suffices to show $p(X, a) \cup q(X, b)$ is consistent. Suppose it is inconsistent. Then by compactness, there are some $\varphi(X, a) \in p(X, a)$ and $\psi(X, b) \in q(X, b)$ such that $\{\varphi(X, a), \psi(X, b)\}$ is inconsistent. Let $s(w, y) = \mathrm{tp} (a,b/M)$ and let $r(y)$ be as in Lemma 5.1. Let $\kappa$ be large. By transfinite induction, we will find $a_{i}, b_{i}$, $i < \kappa$ such that

(1) For $i < j < \kappa$, $a_{j} b_{i} \equiv_{M} ab$, so $\{\varphi(x, a_{j}), \psi(x, b_{i})\}$ is inconsistent and $a_{i} \models \mathrm{tp}(a/M)$.

(2) For $i < j_{1} < \ldots < j_{m} <\kappa $, $b_{j_{1}}, \ldots b_{j_{m}} \models r^{(m)}(y)|_{Ma_{\leq i}}$ and $a_{i} \ind_{M}^{i} a_{<i}$.

(3) For $i < \kappa$, $a_{\leq i} \ind_{M}^{K^{+}} b_{\leq i}$.

Suppose $a_{i}, b_{i}$ already constructed satisfying (1)-(3) for $i < \lambda$. We find $a_{\lambda}$ and $b_{\lambda}$. By $a \ind^{i}_{M} b$, or $a \ind^{f}_{M} b$ or $a \ind^{K}_{M} b$ and the respective chain condition, since $\{b_{i}\}_{i < \lambda}$ is an invariant Morley sequence in $\mathrm{tp}(b/M)$, there is some $a_{\lambda} \models \cup_{i< \lambda} s(w, b_{i})$ with $a_{\lambda} \ind^{K}_{M} b_{<\lambda}$. By Lemma \ref{4-wit5}, $a_{\lambda}$ can then additionally be chosen with $a_{\lambda} \ind^{i}_{M} a_{< \lambda}$ and $a_{\lambda}a_{< \lambda} \ind^{K^{+}}_{M} b_{< \lambda}$, as desired. We then choose $b_{\lambda} \models r(y)|_{a_{\leq \lambda}b_{<\lambda}}$, which will preserve (1) and (2); it remains to show (3). This will follow from the following claim, analogous to Claim 6.2 of \cite{NSOP2}:

\begin{claim}\label{4-inv5}
For any $a, b, c$, $M$, if $a \ind^{K^{+}}_{M} b$ and $\mathrm{tp}(c/Mab)$ extends to an $M$-invariant type $q(x)$, then $a \ind^{K^{+}}_{M} bc$. (This is true as long as $\ind^{K^{+}}$ is symmetric.)
\end{claim}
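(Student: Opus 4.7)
The plan is to use symmetry of $\ind^{K^{+}}$ twice, together with the $M$-invariance of $q$ to transfer indiscernibility. By symmetry of $\ind^{K^{+}}$, the conclusion $a \ind^{K^{+}}_{M} bc$ is equivalent to $bc \ind^{K^{+}}_{M} a$, so it suffices to exhibit an $M$-invariant Morley sequence $\{a_{i}\}_{i < \omega}$ with $a_{0} = a$ that is indiscernible over $Mbc$.

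First, I would apply symmetry of $\ind^{K^{+}}$ to the hypothesis $a \ind^{K^{+}}_{M} b$, obtaining an $M$-invariant Morley sequence $\{a_{i}\}_{i < \omega}$, say in a global $M$-invariant type, with $a_{0} = a$ and indiscernible over $Mb$. Next, since $\mathrm{tp}(c/Mab)$ extends to the $M$-invariant global type $q(x)$, I realize $c' \models q(x)|_{M\{a_{i}\}_{i<\omega}b}$; then $c' \models q(x)|_{Mab}$, so $c' \equiv_{Mab} c$, and after an automorphism fixing $Mab$ I may assume $c' = c$, giving $c \models q(x)|_{M\{a_{i}\}_{i<\omega}b}$.

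The third step is to verify that $\{a_{i}\}_{i < \omega}$ remains indiscernible over $Mbc$. Fix $i_{1} < \ldots < i_{n}$ and $j_{1} < \ldots < j_{n}$. By indiscernibility over $Mb$, there is an automorphism $\sigma$ fixing $Mb$ with $\sigma(a_{i_{k}}) = a_{j_{k}}$ for $1 \leq k \leq n$. Since $q$ is $M$-invariant and $\sigma$ fixes $M$, applying $\sigma$ to the relation $c \models q(x)|_{M\{a_{i}\}_{i<\omega}b}$ yields $\sigma(c) \models q(x)|_{Ma_{j_{1}} \ldots a_{j_{n}}b}$. But $c$ also realizes $q(x)|_{Ma_{j_{1}} \ldots a_{j_{n}}b}$, and this is a complete type, so there is a further automorphism fixing $Ma_{j_{1}}\ldots a_{j_{n}}b$ sending $\sigma(c)$ to $c$. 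Composing, I obtain an automorphism that fixes $Mb$, sends $a_{i_{k}}$ to $a_{j_{k}}$, and fixes $c$, whence $a_{i_{1}} \ldots a_{i_{n}} c \equiv_{Mb} a_{j_{1}} \ldots a_{j_{n}} c$. This establishes indiscernibility of $\{a_{i}\}$ over $Mbc$, so $bc \ind^{K^{+}}_{M} a$, and applying symmetry once more yields $a \ind^{K^{+}}_{M} bc$.

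There is no serious obstacle to this argument; the only subtle point is that $M$-invariance of $q$ \emph{lifts} any $Mb$-elementary map permuting the $a_{i}$'s to one that also fixes $c$, so indiscernibility propagates automatically from $Mb$ to $Mbc$.
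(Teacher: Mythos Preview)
Your proof is correct and follows the same overall strategy as the paper's: use symmetry to pass to an $Mb$-indiscernible $M$-invariant Morley sequence $I$ in $\mathrm{tp}(a/M)$, arrange $c \models q|_{MIb}$ by an automorphism, verify indiscernibility of $I$ over $Mbc$, and apply symmetry once more. The only difference is in the third step: the paper appeals to Ramsey and compactness to extract a sequence indiscernible over $Mbc$, whereas you prove directly that $I$ is \emph{already} indiscernible over $Mbc$, using that $M$-invariance of $q$ lifts any $Mb$-automorphism permuting finitely many $a_i$'s to one that also fixes $c$. Your argument is slightly sharper: it shows the Ramsey/compactness step is in fact redundant, and it sidesteps the (minor) bookkeeping issue of ensuring the extracted sequence still begins at $a$.
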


\begin{proof}
It follows that $b \ind^{K^{+}}_{M} a$, so let $I=\{a_{i}\}_{i \in \omega}$ be an $Mb$-indiscernible invariant Morley sequence over $M$ with $a_{0} = a$. By an automorphism, we can choose $I$ so that $c \models q|MIb$. By Ramsey and compactness, we can further choose $I$ indiscernible over $Mbc$, so $bc \ind^{K^{+}}_{M} a$ and by symmetry of $\ind^{K^{+}}$, $a \ind^{K^{+}} bc$. 
\end{proof}

Finally, by the Erdős-Rado theorem, we can find $\{a_{i}b_{i}\}_{i < \omega}$ indiscernible over $M$, satisfying (1) and (2) (and (3)). Then $\{a_{i}\}_{i \in \omega}$ will be an invariant Morley sequence over $M$ with $a_{i} \models \mathrm{tp}(a/M)$, so for $n \leq m$, by (2) and Lemma \ref{4-consistency5b}, $\cup_{i=1}^{n}p(X, a_{i}) \cup \cup_{i =n+1}^{m} q(X, b_{i})$ and therefore $\{\varphi(x, a_{1}), \ldots \varphi(x, a_{n}), \psi(x, b_{n+1}) \ldots \psi(x, b_{m})\}$ is consistent. This, together with (1), implies $\mathrm{SOP}_{3}$ by Fact \ref{4-sop3fact} --a contradiction. 

\:

\textbf{Acknowledgements} The author would like to thank Hyoyoon Lee, Byunghan Kim, and the other participants of the Yonsei University logic seminar for their comments on the degree of Kim-dividing $k=2$, and Nicholas Ramsey for bringing to the author's attention Question \ref{4-simon?} of Simon. The author would also like to thank Itay Kaplan for enlightening discussions on the results of this paper, which brought to light an error in the statement of Lemma \ref{4-consistency5} in a previous draft, as well as the referee for many helpful comments and suggestions.

\bibliographystyle{plain}
\bibliography{refs}

@misc{Hanson23,
  
  
  url = {https://arxiv.org/abs/2306.08239},
  
  author = {Hanson, James},
  
  keywords = {Logic (math.LO), FOS: Mathematics, FOS: Mathematics, 03C45},
  
  title = {Bi-invariant types, reliably invariant types, and the comb tree property},
  
  publisher = {arXiv},
  
  year = {preprint. Available at https://arxiv.org/abs/2306.08239. 2023.},
  
  copyright = {arXiv.org perpetual, non-exclusive license}
}

@article{CKR23, title={Generic Stability Independence and Treeless Theories}, volume={12}, DOI={10.1017/fms.2024.35}, journal={Forum of Mathematics, Sigma}, author={Kaplan, Itay and Ramsey, Nicholas and Simon, Pierre}, year={2024}, pages={e49}}

@article{CK19,
  title={More on tree properties},
  author={Enrique Casanovas and Byunghan Kim},
  journal={Fundamenta Mathematicae},
  year={2019}
}

@article{K21, title={WEAK CANONICAL BASES IN $\mathrm{NSOP}_1$ THEORIES}, volume={86}, DOI={10.1017/jsl.2021.45}, number={3}, journal={The Journal of Symbolic Logic}, publisher={Cambridge University Press}, author={Kim, Byunghan}, year={2021}, pages={1259–1281}}

@article{KD22,
	doi = {10.2140/mt.2022.1.55},
  
	url = {https://doi.org/10.2140%2Fmt.2022.1.55},
  
	year = 2022,
	month = {jun},
  
	publisher = {Mathematical Sciences Publishers},
  
	volume = {1},
  
	number = {1},
  
	pages = {55--113},
  
	author = {Jan Dobrowolski and Mark Kamsma},
  
	title = {Kim-independence in positive logic},
  
	journal = {Model Theory}
}

@article{K22, title={Independence Relations in Abstract Elementary Categories}, volume={28}, DOI={10.1017/bsl.2022.27}, number={4}, journal={Bulletin of Symbolic Logic}, publisher={Cambridge University Press}, author={Kamsma, Mark}, year={2022}, pages={531–531}}

@misc{SOPEXP,
  doi = {},
  
  url = {},
  
  author = {Mutchnik, Scott},
  
  keywords = {},
  
  title = {On the properties $\mathrm{SOP}_{2^{n+1}+1}$},
  
  publisher = {},
  
  year = {preprint. Available at https://arxiv.org/abs/2305.09811. 2023.},
  
  copyright = {}
}

@article{CKR20, title={Transitivity, lowness, and ranks in $\mathrm{NSOP}_{1}$ theories}, volume={88}, DOI={10.1017/jsl.2023.36}, number={3}, journal={The Journal of Symbolic Logic}, author={Chernikov, Artem and Kim, Byunghan and Ramsey, Nicholas}, year={2023}, pages={919–946}}

@misc{DGK23,

  
  
  author = {Anna Dimitrieva, Franceso Gallinaro and Mark Kamsma},
  

  
  title = {Dividing lines between positive theories},
  
  publisher = {arXiv},
  
  year = {, preprint. Available at https://arxiv.org/pdf/2304.07557.pdf. 2023},
  
  copyright = {arXiv.org perpetual, non-exclusive license}
}

@misc{KL22,

  
  
  author = {Kim, Joonhee and Lee, Hyoyoon},
  

  
  title = {Some Remarks on Kim-dividing in $\mathrm{NATP}$ theories},
  
  publisher = {arXiv},
  
  year = {, preprint. Available at https://arxiv.org/pdf/2211.04213.pdf. 2022,},
  
  copyright = {arXiv.org perpetual, non-exclusive license}
}

@misc{Kr,
  doi = {},
  
  url = {},
  
  author = {Kruckman, Alex},
  
  keywords = {},
  
  title = {Research statement},
  
  publisher = {},
  
  year = {Available at https://akruckman.faculty.wesleyan.edu/files/2019/07/researchstatement.pdf.},
  
  copyright = {},
  
  notes = {}
}

@misc{Adl07,
  
  author = {Adler, Hans},
  
  
  title = {Strong theories, burden and weight},
 
  
  year = { preprint. Available at http://www.logic.univie.ac.at/~adler/docs/strong.pdf. 2007.},
}

@article{Mal10b,
title = {Edge distribution and density in the characteristic sequence},
journal = {Annals of Pure and Applied Logic},
volume = {162},
number = {1},
pages = {1-19},
year = {2010},
issn = {0168-0072},
doi = {https://doi.org/10.1016/j.apal.2010.06.009},
url = {https://www.sciencedirect.com/science/article/pii/S0168007210000874},
author = {M.E. Malliaris},
keywords = {Unstable theories, Independence property, Szemerédi regularity},
abstract = {The characteristic sequence of hypergraphs 〈Pn:n<ω〉 associated to a formula φ(x;y), introduced in Malliaris (2010) [5], is defined by Pn(y1,…,yn)=(∃x)⋀i≤nφ(x;yi). We continue the study of characteristic sequences, showing that graph-theoretic techniques, notably Szemerédi’s celebrated regularity lemma, can be naturally applied to the study of model-theoretic complexity via the characteristic sequence. Specifically, we relate classification-theoretic properties of φ and of the Pn (considered as formulas) to density between components in Szemerédi-regular decompositions of graphs in the characteristic sequence. In addition, we use Szemerédi regularity to calibrate model-theoretic notions of independence by describing the depth of independence of a constellation of sets and showing that certain failures of depth imply Shelah’s strong order property SOP3; this sheds light on the interplay of independence and order in unstable theories.}
}

@misc{GM22,
  doi = {10.48550/ARXIV.2209.00571},
  
  url = {https://arxiv.org/abs/2209.00571},
  
  author = {García, Darío and Mennuni, Rosario},
  
  keywords = {Logic (math.LO), FOS: Mathematics, FOS: Mathematics, 03C45},
  
  title = {Model-theoretic dividing lines via posets},
  
  publisher = {arXiv},
  
  year = {preprint. Available at https://arxiv.org/pdf/2211.04213.pdf. 2022},
  
  copyright = {arXiv.org perpetual, non-exclusive license}
}

@misc{Lee22,
  doi = {},
  
  url = {},
  
  author = {Lee, Hyoyoon},
  
  keywords = {},
  
  title = {},
  
  publisher = {},
  
  year = {Personal communication. Feb. 10, 2023},
  
  copyright = {}
}

@article{Mal12,
 ISSN = {00224812},
 URL = {http://www.jstor.org/stable/23208242},
 abstract = {Let T 1 , T 2 be countable first-order theories, M i ⊨ T i , and �� any regular ultrafilter on λ ≥ $\aleph_{0}$ . A longstanding open problem of Keisler asks when T 2 is more complex than T 1 , as measured by the fact that for any such λ, ��, if the ultrapower (M 2 ) λ /�� realizes all types over sets of size ≤ λ, then so must the ultrapower (M 1 ) λ /��. In this paper, building on the author's prior work [12] [13] [14], we show that the relative complexity of first-order theories in Keisler's sense is reflected in the relative graph-theoretic complexity of sequences of hypergraphs associated to formulas of the theory. After reviewing prior work on Keisler's order, we present the new construction in the context of ultrapowers, give various applications to the open question of the unstable classification, and investigate the interaction between theories and regularizing sets. We show that there is a minimum unstable theory, a minimum TP 2 theory, and that maximality is implied by the density of certain graph edges (between components arising from Szemerédi-regular decompositions) remaining bounded away from 0, 1. We also introduce and discuss flexible ultrafilters, a relevant class of regular ultrafilters which reflect the sensitivity of certain unstable (non low) theories to the sizes of regularizing sets, and prove that any ultrafilter which saturates the minimal TP 2 theory is flexible.},
 author = {M. E. Malliaris},
 journal = {The Journal of Symbolic Logic},
 number = {1},
 pages = {195--223},
 publisher = {Association for Symbolic Logic},
 title = {HYPERGRAPH SEQUENCES AS A TOOL FOR SATURATION OF ULTRAPOWERS},
 urldate = {2023-03-14},
 volume = {77},
 year = {2012}
}

@article{Mal10,
 ISSN = {00224812},
 URL = {http://www.jstor.org/stable/20799323},
 abstract = {For a first-order formula φ(x; y) we introduce and study the characteristic sequence ⟨P n : n < ω⟩ of hypergraphs defined by P n (y₁…., y n ):= $(\exists x)\bigwedge _{i\leq n}\varphi (x;y_{i})$ . We show that combinatorial and classification theoretic properties of the characteristic sequence reflect classification theoretic properties of φ and vice versa. The main results are a characterization of NIP and of simplicity in terms of persistence of configurations in the characteristic sequence. Specifically, we show that some tree properties are detected by the presence of certain combinatorial configurations in the characteristic sequence while other properties such as instability and the independence property manifest themselves in the persistence of complicated configurations under localization.},
 author = {M. E. Malliaris},
 journal = {The Journal of Symbolic Logic},
 number = {4},
 pages = {1415--1440},
 publisher = {Association for Symbolic Logic},
 title = {THE CHARACTERISTIC SEQUENCE OF A FIRST-ORDER FORMULA},
 urldate = {2023-03-14},
 volume = {75},
 year = {2010}
}

@article{KRS19,
author = {Kaplan, Itay and Ramsey, Nicholas and Shelah, Saharon},
year = {2017},
month = {07},
pages = {},
title = {Local character of {K}im-independence},
volume = {147},
journal = {Proceedings of the American Mathematical Society},
doi = {10.1090/proc/14305}
}

@article{Sc15,
	author = {Lynn Scow},
	doi = {10.1215/00294527-3132797},
	year = {2015},
	pages = {429--447},
	number = {3},
	title = {Indiscernibles, {EM}-Types, and {R}amsey Classes of Trees},
	volume = {56},
	journal = {Notre Dame Journal of Formal Logic}
}

@article{TT12,
title = {On the existence of indiscernible trees},
journal = {Annals of Pure and Applied Logic},
volume = {163},
number = {12},
pages = {1891-1902},
year = {2012},
issn = {0168-0072},
doi = {https://doi.org/10.1016/j.apal.2012.05.012},
url = {https://www.sciencedirect.com/science/article/pii/S0168007212000978},
author = {Kota Takeuchi and Akito Tsuboi},
keywords = {Indiscernible sequence, Indiscernible tree, Simplicity, Tree property, Lowness},
abstract = {We introduce several concepts concerning the indiscernibility of trees. A tree is by definition an ordered set (O,<) such that, for any a∈O, the initial segment {b∈O:b<a} determined by a is a linearly ordered set. A typical example of a tree is the set ω<ω of finite ω-sequences with the order relation <ini, where η<iniν means that η is a proper initial segment of ν. In this paper, we consider some structure M in the language L and are interested in sets A of the form (aη)η∈O, where O is a tree, and aη labeled by η is an element in M. Such a set A is also called a tree in this paper. We study the indiscernibility of trees A in general settings and apply the obtained results to the study of unstable theories.}
}

@article{HKP00,
 ISSN = {00224812},
 URL = {http://www.jstor.org/stable/2586538},
 author = {Bradd Hart and Byunghan Kim and Anand Pillay},
 journal = {The Journal of Symbolic Logic},
 number = {1},
 pages = {293--309},
 publisher = {Association for Symbolic Logic},
 title = {Coordinatisation and Canonical Bases in Simple Theories},
 urldate = {2023-03-12},
 volume = {65},
 year = {2000}
}

@article{Sim20,
author = {Pierre Simon},
title = {{On Amalgamation in NTP$_{2}$ Theories and Generically Simple Generics}},
volume = {61},
journal = {Notre Dame Journal of Formal Logic},
number = {2},
publisher = {Duke University Press},
pages = {233 -- 243},
keywords = {amalgamation, Lascar strong types, NTP2},
year = {2020},
doi = {10.1215/00294527-2020-0003},
URL = {https://doi.org/10.1215/00294527-2020-0003}
}

@article{DKR22,
title = {Independence over arbitrary sets in $\mathrm{NSOP}_{1}$ theories},
journal = {Annals of Pure and Applied Logic},
volume = {173},
number = {2},
pages = {103058},
year = {2022},
issn = {0168-0072},
doi = {https://doi.org/10.1016/j.apal.2021.103058},
url = {https://www.sciencedirect.com/science/article/pii/S0168007221001160},
author = {Jan Dobrowolski and Byunghan Kim and Nicholas Ramsey},
keywords = {Kim-dividing, Extension, Symmetry, The independence theorem},
abstract = {We study Kim-independence over arbitrary sets. Assuming that forking satisfies existence, we establish Kim's lemma for Kim-dividing over arbitrary sets in an NSOP1 theory. We deduce symmetry of Kim-independence and the independence theorem for Lascar strong types.}
}

@misc{KTW22,
  
  url = {https://arxiv.org/abs/2201.03534},
  
  author = {Alex Kruckman and Minh Chieu Tran and Erik Walsberg},
  
  keywords = {Logic (math.LO), FOS: Mathematics, FOS: Mathematics},
  
  title = {Interpolative fusions II: Preservation results},
  
  publisher = {arXiv},
  
  year = {preprint. Available at https://arxiv.org/abs/2206.08512. 2022},
  
  copyright = {Creative Commons Attribution 4.0 International}
}

@article{Che14,
  title={Theories without the tree property of the second kind},
  author={Artem Chernikov},
  journal={Ann. Pure Appl. Log.},
  year={2014},
  volume={165},
  pages={695-723}
}

@misc{GFA,
  doi = {},
  
  url = {},
  
  author = {Mutchnik, Scott},
  
  keywords = {},
  
  title = {Conant-Independence in Generalized Free Amalgamation Theories},
  
  publisher = {},
  
  year = {preprint. Available at https://arxiv.org/abs/2210.07527. 2022},
  
  copyright = {}
}

@book{Kim14,
  title={Simplicity theory},
  author={Kim, Byunghan},
  year={2014},
  publisher={Oxford University Press}
}

@article{KP99,
title = {Simple theories},
journal = {Annals of Pure and Applied Logic},
volume = {88},
number = {2},
pages = {149-164},
year = {1997},
note = {Joint AILA-KGS Model Theory Meeting},
issn = {0168-0072},
doi = {https://doi.org/10.1016/S0168-0072(97)00019-5},
url = {https://www.sciencedirect.com/science/article/pii/S0168007297000195},
author = {Byunghan Kim and Anand Pillay}
}

@article{KR19,
title = {Transitivity of {K}im-independence},
journal = {Advances in Mathematics},
volume = {379},
pages = {107573},
year = {2021},
issn = {0001-8708},
doi = {https://doi.org/10.1016/j.aim.2021.107573},
url = {https://www.sciencedirect.com/science/article/pii/S0001870821000116},
author = {Itay Kaplan and Nicholas Ramsey},
keywords = {Model theory, Kim-independence, NSOP},
abstract = {We prove several results on the behavior of Kim-independence upon changing the base in NSOP1 theories. As a consequence, we prove that Kim-independence satisfies transitivity and that this characterizes NSOP1. Moreover, we characterize witnesses to Kim-dividing as exactly the Image 1-Morley sequences. We give several applications, answering a number of open questions concerning transitivity, Morley sequences, and local character in NSOP1 theories.}
}

@misc{NSOP2,
  doi = {10.48550/ARXIV.2206.08512},
  
  url = {https://arxiv.org/abs/2206.08512},
  
  author = {Mutchnik, Scott},
  
  keywords = {Logic (math.LO), FOS: Mathematics, FOS: Mathematics},
  
  title = {On $\mathrm{NSOP}_2$ Theories},
  
  publisher = {arXiv},
  
  year = {preprint. Available at https://arxiv.org/abs/2206.08512. 2022},
  
  copyright = {Creative Commons Attribution 4.0 International}
}

@book{Sh90,
  title={Classification theory: and the number of non-isomorphic models},
  author={Shelah, Saharon},
  year={1990},
  publisher={Elsevier}
}

@article{AK21,
	author = {JinHoo Ahn and Joonhee Kim and Junguk Lee},
	doi = {10.1142/s0219061322500210},
	journal = {Journal of Mathematical Logic},
	number = {2},
	title = {On the Antichain Tree Property},
	volume = {23},
	year = {2022}
}

@book{sim12, place={Cambridge}, series={Lecture Notes in Logic}, title={A Guide to $\mathrm{NIP}$ Theories}, DOI={10.1017/CBO9781107415133}, publisher={Cambridge University Press}, author={Simon, Pierre}, year={2015}, collection={Lecture Notes in Logic}}

@article{CR15,
	year = {2016},
	doi = {10.1142/s0219061316500094},
	pages = {1650009},
	author = {Artem Chernikov and Nicholas Ramsey},
	volume = {16},
	journal = {Journal of Mathematical Logic},
	title = {On Model-Theoretic Tree Properties},
	number = {2}
}

@article{AK20,
title = {$\mathrm{SOP}_1$, $\mathrm{SOP}_2$, and antichain tree property},
journal = {Annals of Pure and Applied Logic},
volume = {175},
number = {3},
pages = {103402},
year = {2024},
issn = {0168-0072},
doi = {https://doi.org/10.1016/j.apal.2023.103402},
url = {https://www.sciencedirect.com/science/article/pii/S0168007223001598},
author = {JinHoo Ahn and Joonhee Kim},
keywords = {SOP, SOP, Tree property, Tree indiscernibility},
abstract = {In this paper, we study some tree properties and their related indiscernibilities. First, we prove that SOP2 can be witnessed by a formula with a tree of tuples holding ‘arbitrary homogeneous inconsistency’ (e.g., weak k-TP1 conditions or other possible inconsistency configurations). And we introduce a notion of tree-indiscernibility, which preserves witnesses of SOP1, and by using this, we investigate the problem of (in)equality of SOP1 and SOP2. Assuming the existence of a formula having SOP1 such that no finite conjunction of it has SOP2, we observe that the formula must witness some tree-property-like phenomenon, which we will call the antichain tree property (ATP, see Definition 4.1). We show that ATP implies SOP1 and TP2, but the converse of each implication does not hold. So the class of NATP theories (theories without ATP) contains the class of NSOP1 theories and the class of NTP2 theories. At the end of the paper, we construct a structure whose theory has a formula having ATP, but any conjunction of the formula does not have SOP2. So this example shows that SOP1 and SOP2 are not the same at the level of formulas, i.e., there is a formula having SOP1, while any finite conjunction of it does not witness SOP2 (but a variation of the formula still has SOP2).}
}

@article{BYC07, title={AN INDEPENDENCE THEOREM FOR $\mathrm{NTP}_{2}$ THEORIES}, volume={79}, DOI={10.1017/jsl.2013.22}, number={1}, journal={The Journal of Symbolic Logic}, publisher={Cambridge University Press}, author={Yaacov, Itaï Ben and Chernikov, Artem}, year={2014}, pages={135–153}}

@article{CK09,
 ISSN = {00224812},
 URL = {http://www.jstor.org/stable/23208231},
 abstract = {We prove that in theories without the tree property of the second kind (which include dependent and simple theories) forking and dividing over models are the same, and in fact over any extension base. As an application we show that dependence is equivalent to bounded non-forking assuming NTP 2 .},
 author = {Artem Chernikov and Itay Kaplan},
 journal = {The Journal of Symbolic Logic},
 number = {1},
 pages = {1--20},
 publisher = {Association for Symbolic Logic},
 title = {FORKING AND DIVIDING IN $\mathrm{NTP}_2$
          THEORIES},
 urldate = {2022-06-07},
 volume = {77},
 year = {2012}
}

@article{EW09,
author = {David E. Evans and Mark Wing Ho Wong},
title = {{Some remarks on generic structures}},
volume = {74},
journal = {Journal of Symbolic Logic},
number = {4},
publisher = {Association for Symbolic Logic},
pages = {1143 -- 1154},
year = {2009},
doi = {10.2178/jsl/1254748684},
URL = {https://doi.org/10.2178/jsl/1254748684}
}

@article{Co15, title={AN AXIOMATIC APPROACH TO FREE AMALGAMATION}, volume={82}, DOI={10.1017/jsl.2016.42}, number={2}, journal={The Journal of Symbolic Logic}, publisher={Cambridge University Press}, author={Conant, Gabriel}, year={2017}, pages={648–671}}

@article{KR18,
	doi = {10.1016/j.apal.2018.04.003},
  
	url = {https://doi.org/10.1016%2Fj.apal.2018.04.003},
  
	year = 2018,
	month = {aug},
  
	publisher = {Elsevier {BV}
},
  
	volume = {169},
  
	number = {8},
  
	pages = {755--774},
  
	author = {Alex Kruckman and Nicholas Ramsey},
  
	title = {Generic expansion and $\text{Skolemization}$ in $\mathrm{NSOP}_{1}$ theories},
  
	journal = {Annals of Pure and Applied Logic}
}

@article{A09,
  title={A Geometric Introduction to Forking and Thorn-Forking},
  author={Hans Adler},
  journal={Journal of Mathematical Logic},
  year={2009},
  volume={9}
}

@article{MS17,
  title={Model-theoretic applications of cofinality spectrum problems},
  author={Malliaris, Maryanthe and Shelah, Saharon},
  journal={Israel Journal of Mathematics},
  volume={220},
  number={2},
  pages={947--1014},
  year={2017},
  publisher={Springer}
}

@article{KR17,
author = {Kaplan, Itay and Ramsey, Nicholas},
year = {2017},
month = {02},
pages = {},
title = {On {K}im-Independence},
volume = {22},
journal = {Journal of the European Mathematical Society},
doi = {10.4171/JEMS/948}
}

@article{She95,
title = {Toward classifying unstable theories},
journal = {Annals of Pure and Applied Logic},
volume = {80},
number = {3},
pages = {229-255},
year = {1996},
issn = {0168-0072},
doi = {https://doi.org/10.1016/0168-0072(95)00066-6},
url = {https://www.sciencedirect.com/science/article/pii/0168007295000666},
author = {Saharon Shelah}
}

\end{document}